\newtheorem{theorem}{Theorem}[section]           
\newtheorem{corollary}[theorem]{Corollary}       
\newtheorem{lemma}[theorem]{Lemma}               
\theoremstyle{definition}   
\newtheorem{definition}[theorem]{Definition}     
\newtheorem{miniremark}[theorem]{}               
\newtheorem{example}[theorem]{Example}           
\theoremstyle{remark}
\newtheorem{remark}[theorem]{Remark}             
\newtheoremstyle{citing}                
  {3pt}                                 
  {3pt}                                 
  {\itshape}                            
  {}                                    
  {}                                    
  {\textbf.}                            
  {.5em}                                
  {\thmnote{#3}}                        
\theoremstyle{citing}
\newtheorem*{citing}{}
\newtheoremstyle{citing_definition}     
  {3pt}                                 
  {3pt}                                 
  {}                                    
  {}                                    
  {}                                    
  {\textbf.}                            
  {.5em}                                
  {\thmnote{#3}}                        
\theoremstyle{citing_definition}
\newtheorem*{citing_definition}{}
\DeclareMathOperator{\diam}{diam}       
\DeclareMathOperator{\card}{card}       
\DeclareMathOperator{\dmn}{dmn}         
\DeclareMathOperator{\degree}{degree}   
\DeclareMathOperator{\dist}{dist}       
\DeclareMathOperator{\im}{im}           
\DeclareMathOperator{\Hom}{Hom}         
\DeclareMathOperator{\with}{:}          
\DeclareMathOperator{\without}{\sim}    
\DeclareMathOperator{\restrict}{\llcorner}   
\DeclareMathOperator{\spt}{spt}         
\DeclareMathOperator{\Lip}{Lip}         
\DeclareMathOperator{\Der}{D}           
\DeclareMathOperator{\ap}{ap}           
\DeclareMathOperator{\pt}{pt}           
\newcommand{\ud}{\ensuremath{\,\mathrm{d}}}
\title{Pointwise differentiability of higher order for distributions}
\author{Ulrich Menne}
\begin{document}

\maketitle

\begin{abstract}
	For distributions, we build a theory of higher order pointwise
	differentiability comprising, for order zero, Łojasiewicz's notion of
	point value.  Results include Borel regularity of differentials,
	higher order rectifiability of the associated jets, a
	Rademacher-Stepanov type differentiability theorem, and a Lusin type
	approximation.   A substantial part of this development is new also
	for zeroth order.  Moreover,  we establish a Poincaré inequality
	involving the natural norms of negative order of differentiability.
	As a corollary, we characterise pointwise differentiability in terms
	of point values of distributional partial derivatives.
\end{abstract}

\paragraph{MSC-classes 2010} 46F10 (Primary); 26B05, 41A58 (Secondary).

\paragraph{Keywords} Distribution $\cdot$ higher order pointwise
differentiability $\cdot$ Łojasiewicz point value $\cdot$ asymptotic expansion
$\cdot$ higher order rectifiability $\cdot$ Rademacher-Stepanov type theorem
$\cdot$ Lusin type approximation $\cdot$ Poincaré inequality.

\section{Introduction}

In their fundamental paper~\cite{MR0136849}, Calderón and Zygmund created a
pointwise differentiability theory for functions in Lebesgue spaces and
applied it to the study of \emph{strong solutions} of systems of linear
elliptic equations.  Proceeding to \emph{weak solutions}, one is naturally led
to investigate pointwise differentiability theory of distributions first.  Our
treatment thereof is strongly influenced by the present and future needs of
regularity questions in geometric measure theory discussed towards the end of
this introduction.

Independent of this motivation, our results also shed new light on the
well-established concept, introduced by Łojasiewicz in \cite{MR0087905}
and~\cite{MR0107167}, of point value of a distribution.  The latter occurs for
instance in the multiplication of distributions (see, e.g.,
Łojasiewicz~\cite{MR0087904} and Itano~\cite{MR0209835}), in Fourier series
(see, e.g., Vindas and Estrada~\cite{MR2355012}), in boundary and initial
value problems in partial differential equations (see, e.g.,
Szmydt~\cite{MR0508961} and Walter~\cite{MR0298415}), and in generalised
integrals (see Estrada and Vindas~\cite{MR2952047}).

\emph{Throughout this introduction, we suppose $k$ is an integer, $0 < \alpha
\leq 1$, $k+\alpha \geq 0$, $n$ is a positive integer, $Y$ is a Banach space,
and $T \in \mathscr D' ( \mathbf R^n, Y )$.}  Our chief concern is the case $Y
= \mathbf R$; hence, separability of~$Y$ is hypothesised whenever convenient
and separability of~$Y^\ast$ is assumed when it may not be omitted.

\subsection{Differentiability theory and Łojasiewicz's point values}

The definition of pointwise differentiability adapts the approach of
Rešetnjak, to transform to all objects to the unit ball, from functions to
distributions (see \cite[p.\,294]{MR0225159-english}).%
\begin{footnote}
	{The Russian original is~\cite{MR0225159}.}
\end{footnote}%
To formulate it, we recall that $R ( \phi )$ is alternatively denoted by $R_x
( \phi(x))$ whenever $R \in \mathscr D' ( \mathbf R^n, Y )$ and $\phi \in
\mathscr D ( \mathbf R^n, Y )$, that $\langle y, \upsilon \rangle$ indicates
the value $\upsilon(y)$ of the pairing of $y \in Y$ and $\upsilon \in Y^\ast$,
and that $\langle \cdot,\cdot \rangle$ is similarly employed for functions
with values in these spaces.

\begingroup \hypertarget{def:pt_diff}{}
	\begin{citing_definition} [\textbf{Definition}
	(see~\ref{def:pt_diff_integral})]
		Whenever $a \in \mathbf R^n$, the distribution $T$ is termed
		\emph{pointwise differentiable of order~$k$ at~$a$} if and
		only if there exists a polynomial function $P : \mathbf R^n
		\to Y^\ast$ of degree at most~$k$ satisfying
		\begin{equation*}
			\lim_{r \to 0+} r^{-k-n} (T-S)_x \big ( \phi (r^{-1}
			(x-a)) \big) = 0 \quad \text{for $\phi \in \mathscr D
			( \mathbf R^n, Y )$},
		\end{equation*}
		where $S \in \mathscr D' ( \mathbf R^n, Y )$ is defined by $S
		( \phi ) = \int \langle \phi, P \rangle \ud \mathscr L^n$ for
		$\phi \in \mathscr D ( \mathbf R^n, Y )$; here, by convention,
		a polynomial function of degree at most~$-1$ is the zero
		function.  As $P$ is unique (see~\ref{lemma:uniqueness}), we
		may define the \emph{$k$-th order pointwise differential
		of~$T$ at~$a$} by $\pt \Der^k T (a) = \Der^k P (a)$ if $k \geq
		0$.
	\end{citing_definition}
\endgroup

For $k = 0$, this definition yields Łojasiewicz's notion of point value
(see~\ref{remark:lojasiewicz_pt_value}).  More generally, adapting the
terminology of Drožžinov and Zav\cprime jalov for $k \geq 0$, our
condition requires the existence of a ``closed quasi-asymptotic expansion of
order $k$ of length $0$'' at $a$ in terms of $P$, see
\ref{remark:asymptotic_analysis}.  We define \emph{pointwise
differentiability of order~$(k,\alpha)$} by employing the condition
\begin{equation*}
	\limsup_{r \to 0+} r^{-k-\alpha-n} (T-S)_x \big ( \phi (r^{-1} (x-a))
	\big) < \infty \quad \text{for $\phi \in \mathscr D ( \mathbf R^n, Y
	)$}
\end{equation*}
in a similar fashion (see \ref{def:pt_diff_hoelder}).  This extends
Zieleźny's notion of boundedness at a point which is defined for $(k,\alpha) =
(-1,1)$ and $n=1$, see~\ref{remark:zielezny_bounded}.

In the author's view, a theory of higher order pointwise differentiability for
a class of objects should consist of at least four results: Borel regularity
of the differentials, rectifiability of the family of $k$~jets, a
Rademacher-Stepanov type theorem, and a Lusin type approximation theorem by
functions of class~$k$.  Such theories (possibly with Borel regularity
replaced by appropriate measurability) have been developed for approximate
differentiation of functions (successively, by Whitney in~\cite{MR0043878},
Isakov in~\cite{MR897693-english},%
\begin{footnote}
	{The Russian original is~\cite{MR897693}.}
\end{footnote}%
and Liu and Tai in~\cite{MR1272893}), for differentiation in Lebesgue spaces
with respect to $\mathscr L^n$ (by Calderón and Zygmund in~\cite{MR0136849}),
for pointwise differentiation of sets (by the author
in~\cite{MR3936235}), and for approximate differentiation of sets (by
Santilli in~\cite{IUMJ_7645}).

For distributions, a pointwise differentiability theory for zeroth order
(i.e., $k+\alpha = 0$) with almost all four results present was developed in
the special case of distributions on the real line (i.e., $n=1$) and $Y =
\mathbf C$ by Zieleźny in~\cite{MR0113134}.  As we will elaborate below, it
appears rather difficult to extend the method of Zieleźny to general~$n$; in
fact, the study of this generalisation was announced for zeroth order
in~\cite[p.\,27]{MR0113134} but seems not to be available as yet.  Employing
different methods, we are able to obtain the four indicated key results for
general~$n$ and all nonnegative orders in the following
Theorems~\hyperlink{thm:A}A--\hyperlink{thm:D}D.

\begingroup \hypertarget{thm:A}{}
	\begin{citing} [\textbf{Theorem A}
	\textrm{(see~\ref{miniremark:setup} and \ref{thm:borel_derivatives})}]
		Suppose $k \geq 0$, $Y$ is separable, and $A$~is the set of
		points at which $T$ is pointwise differentiable of order~$k$.

		Then, $A$ is a Borel set and, for each $y \in Y$, the function
		mapping $a \in A$ onto the real valued symmetric $k$~linear
		map with value
		\begin{equation*}
			\pt \Der^k T (a) ( v_1, \ldots, v_k ) (y) \quad
			\text{at $(v_1, \ldots, v_k) \in (\mathbf R^n )^k$},
		\end{equation*}
		is a Borel function.  In particular, if $Y^\ast$ is separable,
		$\pt \Der^k T$ is a Borel function.
	\end{citing}
\endgroup

The principal conclusion may alternatively be stated using a natural weak
topology (see~\ref{miniremark:setup}) on the space of $Y^\ast$~valued
symmetric $k$~linear maps on~$( \mathbf R^n )^k$.  A classical example
due to Gelfand (see~\cite[p.\,265]{msb_v46_i2_p235}) shows that, without the
separability hypothesis on~$Y^\ast$, the function $\pt \Der^k T$ may be
$\mathscr L^n \restrict A$~nonmeasurable (see~\ref{remark:gelfands_classic});
in particular, that hypothesis may not be omitted.

\begingroup \hypertarget{thm:B}{}
	\begin{citing} [\textbf{Theorem B} (see \ref{thm:cka})]
		Suppose $A$ is the set of points at which $T$ is pointwise
		differentiable of order~$(k,\alpha)$.

		Then, there exists a sequence of compact subsets $C_j$
		of~$\mathbf R^n$ with $A = \bigcup_{j=1}^\infty C_j$ and, if
		$k \geq 0$, also a sequence of functions $f_j : \mathbf R^n
		\to Y^\ast$ of class~$(k,\alpha)$ satisfying
		\begin{equation*}
			\pt \Der^m T (a) = \Der^m f_j (a) \quad \text{for $a
			\in C_j$ and $m = 0, \ldots, k$}
		\end{equation*}
		whenever $j$ is a positive integer.
	\end{citing}
\endgroup

As in the case of the differentiability theory of Calderón and
Zygmund~\cite[Theorems 8 and~9]{MR0136849}, there is no exceptional set in
this rectifiability result.

\begingroup \hypertarget{thm:C}{}
	\begin{citing} [\textbf{Theorem C} (see \ref{corollary:rademacher})]
		Suppose $Y$ is separable and $A$ is the set of points at which
		$T$~is pointwise differentiable of order~$(k,1)$.

		Then, $T$ is pointwise differentiable of order~$k+1$ at
		$\mathscr L^n$~almost all $a \in A$.
	\end{citing}
\endgroup

This theorem in particular proves the existence of point values at $\mathscr
L^n$ almost all points at which the distribution is bounded in the sense of
Zieleźny provided the latter concept is analogously extended to general~$n$,
see \ref{remark:zielezny_bounded} and~\ref{remark:zielezny_rademacher}.

\begingroup \hypertarget{thm:D}{}
	\begin{citing} [\textbf{Theorem D} (see
	\ref{thm:k_lusin_approximation})]
		Suppose $k \geq 0$, $Y^\ast$ is separable, and $A$ is the set
		of points at which $T$ is pointwise differentiable of
		order~$k$.

		Then, for each $\epsilon > 0$, there exists $g : \mathbf R^n
		\to Y^\ast$ of class~$k$ such that
		\begin{equation*}
			\mathscr L^n \big ( A \without \{ a \with \textup{$\pt
			\Der^m T (a) = \Der^m g (a)$ for $m = 0, \ldots,
			k$} \} \big ) < \epsilon.
		\end{equation*}
	\end{citing}
\endgroup

The possible $\mathscr L^n \restrict A$ nonmeasurability of $\pt \Der^k T$ for
nonseparable~$Y^\ast$ shows that one cannot replace $Y^\ast$ by $Y$ in the
separability hypothesis (see~\ref{remark:k_lusin_approximation}).  If $k=0$,
then Theorem~\hyperlink{thm:D}D follows from Theorem~\hyperlink{thm:A}A and
Lusin's theorem.  If $k\geq 1$, then the weaker statement resulting from
replacing $k$ by~$k-1$ in the conclusion of Theorem~\hyperlink{thm:D}{D}
follows, at least if $Y = \mathbf R$, from Theorems \hyperlink{thm:B}{B}
and~\hyperlink{thm:C}{C}, and Whitney's Lusin type approximation result for
functions of class~$(k-1,1)$ by functions of class~$k$, see~\cite[Theorem
4]{MR0043878}.  Accordingly, the main additional information contained in
Theorem~\hyperlink{thm:D}D is the equality of the $k$-th derivatives involved.

Beyond the above four basic properties, the pointwise differentiability theory
for distributions allows to deduce, for nonnegative integers~$l$,
differentiability information of order $k+l$ from differentiability
information of order~$l$ of the $k$-th order distributional derivatives.  This
property is shared by Calderón and Zygmund's theory of differentiation in
Lebesgue spaces with respect to~$\mathscr L^n$, see
\ref{remark:cz_diff_diff}.  However, it fails for approximate differentiation
as Kohn's example in~\cite{MR0427559} shows.  Recalling, for use with $m$-th
order partial derivatives, that $\boldsymbol \Xi (n,m)$ denotes the set of all
$n$~termed sequences of nonnegative integers whose sum equals $m$, our next
theorem formulates the property in question for distributions.

\begingroup \hypertarget{thm:E}{}
	\begin{citing} [\textbf{Theorem E} (see
	\ref{thm:pt_diff_derivatives}\,\eqref{item:pt_diff_derivatives:top_integral})]
		Suppose $k \geq 1$, $l$ is nonnegative integer, $a \in \mathbf
		R^n$, and $\Der^\xi T$ is pointwise differentiable of
		order~$l$ at~$a$ for $\xi \in \boldsymbol \Xi (n,k)$.

		Then, $T$ is pointwise differentiable of order~$k+l$ at~$a$.
	\end{citing}
\endgroup

As the converse is elementary (see~\ref{remark:distributional_derivative}),
Theorem~\hyperlink{thm:E}E in particular yields a natural characterisation of
pointwise differentiability of order~$k$ in terms of the existence of point
values in the sense of Łojasiewicz of the $k$-th order partial derivatives
(see~\ref{corollary:characterisation_pt_diff}).  For general~$n$ and $Y =
\mathbf C$, it was previously only known that the existence of point values
for the partial derivatives implies the existence of point values for the
distribution itself (see Itano~\cite[Lemma~3]{MR0209835}).  In the subcase
$n=1$, a related characterisation of point valued by means of ``integrable''
distributional derivative was obtained by Kim
in~\cite[Proposition~2]{MR3261225}.

For pointwise differentiability of order~$(k+l,\alpha)$, the corresponding
theorem reads as follows.

\begingroup \hypertarget{thm:E'}{}
	\begin{citing} [\textbf{Theorem E$'$} (see
	\ref{thm:pt_diff_derivatives}\,\eqref{item:pt_diff_derivatives:top_hoelder})]
		Suppose $k \geq 1$, $l$ is an integer, $l+\alpha \geq 0$, $a
		\in \mathbf R^n$, and $\Der^\xi T$ is pointwise differentiable
		of order~$(l,\alpha)$ at~$a$ for $\xi \in \boldsymbol \Xi
		(n,k)$.

		Then, $T$ is pointwise differentiable of order~$(k+l,\alpha)$
		at~$a$.
	\end{citing}
\endgroup

\subsection{Concept of the proofs and a Poincaré inequality}

A short proof of a variant of the four basic properties of a differentiability
theory for the simplest case of pointwise (i.e., Peano type) differentiability
of functions in the sense of \cite[2.6, 2.7]{MR3936235} is available
in~\cite[4.6]{MR3936235}.  Here, we mainly focus on the aspects specific to
the setting of distributions.  For this purpose, we recall that $\mathbf
B(a,r)$ denotes the closed ball with centre $a$ and radius $r$, that
\begin{equation*}
	\mathscr D_K ( \mathbf R^n, Y ) = \mathscr D ( \mathbf R^n, Y ) \cap
	\{ \phi \with \spt \phi \subset K \}
\end{equation*}
whenever $K$ is a compact subset of~$\mathbf R^n$, and that, for every
nonnegative integer~$i$, the seminorms $\boldsymbol \nu_{\mathbf
B(0,1)}^i$ defined on $\mathscr E ( \mathbf R^n, Y )$ have value
(see~\ref{miniremark:lip})
\begin{equation*}
	\sup \{ \| \Der^m \phi (x) \| \with x \in \mathbf B(0,1), m = 0,
	\ldots, i \} = \sup \im \| \Der^i \phi \|
\end{equation*}
at $\phi \in \mathscr D_{\mathbf B(0,1)} ( \mathbf R^n, Y)$.  Accordingly,
whenever $\nu$ a seminorm whose restriction to $\mathscr D_{\mathbf B(0,1)} (
\mathbf R^n, Y)$ is a norm on $\mathscr D_{\mathbf B(0,1)} ( \mathbf R^n,
Y)$, we define (see \ref{def:nu_pt_diff_integral}
and~\ref{def:nu_pt_diff_hoelder}) the notions of \emph{$\nu$~pointwise
differentiability of order~$k$} and \emph{$\nu$~pointwise differentiability of
order~$(k,\alpha)$} by requiring that the limit conditions in the
corresponding definitions without prefix are satisfied uniformly for $\phi \in
\mathscr D_{\mathbf B(0,1)} ( \mathbf R^n, Y )$ with $\nu (\phi) \leq 1$.

Then, pointwise differentiability of order~$(k,\alpha)$ is equivalent to
$\boldsymbol \nu_{\mathbf B(0,1)}^i$~pointwise differentiability of
order~$(k,\alpha)$ for some nonnegative integer~$i$,
see~\ref{remark:norm_differentiability}.  The corresponding statement for
pointwise differentiability of order~$k$ holds if and only if $\dim Y <
\infty$, see \ref{remark:dense_diff_crit}
and~\ref{remark:dim_finite_not_omittable}.  With these two facts at hand,
Theorems \hyperlink{thm:A}A and~\hyperlink{thm:B}B are derived as in the case
of functions; that is, employing basic descriptive set theory for
Theorem~\hyperlink{thm:A}A and Whitney's extension theorem for
Theorem~\hyperlink{thm:B}B.

The key to prove Theorems \hyperlink{thm:C}C and~\hyperlink{thm:D}D is the
following theorem which corresponds to~\cite[4.4]{MR3936235} in the case of
functions.  The pattern of proof however follows \cite[Appendix]{MR3023856}
where the case $i=1$, $k=0$, and $\dim Y < \infty$ was treated by means of the
Whitney type partition of unity in~\cite[3.1.13]{MR41:1976}.

\begingroup \hypertarget{thm:F}{}
	\begin{citing} [\textbf{Theorem F} (see~\ref{thm:big_O_little_o})]
		Suppose $i$ is a nonnegative integer, $Y$ is separable, and
		$A$ is the set of points $a \in \mathbf R^n$ at which $T$~is
		$\boldsymbol \nu_{\mathbf B (0,1)}^i$~pointwise differentiable
		of order~$(k-1,1)$ and $\pt \Der^m T(a) = 0$ for $m = 0,
		\ldots, k-1$.

		Then, $\mathscr L^n$~almost all $a \in A$ satisfy the
		following three statements:
		\begin{enumerate}
			\item  The distribution~$T$ is pointwise
			differentiable of order~$k$ at~$a$.
			\item  If $k>0$ or $Y^\ast$ is separable, then $T$ is
			$\boldsymbol \nu_{\mathbf B(0,1)}^i$ pointwise
			differentiable of order~$k$ at~$a$.
			\item If $k>0$, then $\pt \Der^k T(a)=0$.
		\end{enumerate}
	\end{citing}
\endgroup

The hypotheses in the last two items may not be omitted (see
\ref{remark:big_O_little_o:sep} and~\ref{remark:big_O_little_o:zero_order}).
For separable~$Y^\ast$, Theorem~\hyperlink{thm:F}F also yields a version of
the Rademacher-Stepanov type theorem, Theorem~\hyperlink{thm:C}C, for
$\boldsymbol \nu_{\mathbf B(0,1)}^i$~pointwise differentiability
(see~\ref{thm:rademacher_T}\,\eqref{item:rademacher_T:dual_sep}).

The key to establish Theorems \hyperlink{thm:E}E and~\hyperlink{thm:E'}{E$'$},
as well as their versions for $\boldsymbol \nu_{\mathbf B(0,1)}^i$ pointwise
differentiability
(see~\ref{thm:pt_diff_derivatives}\,\eqref{item:pt_diff_derivatives:integral}\,\eqref{item:pt_diff_derivatives:hoelder}),
is the following Poincaré inequality.  Despite the natural significance of a
Poincaré inequality, little appears to be known on such inequalities when
norms of negative order of differentiability are employed.  For our purposes,
the following theorem is sufficient.

\begingroup \hypertarget{thm:G}{}
	\begin{citing} [\textbf{Theorem G}
	(see~\ref{thm:poincare_inequality})]
		Suppose $i$ is a nonnegative integer, $k \geq 1$, $0 \leq
		\kappa < \infty$, and
		\begin{equation*}
			| (\Der^o T) ( \phi ) | \leq \kappa \sup \im \| \Der^i
			\phi \| \quad \text{for $\phi \in \mathscr D_{\mathbf
			B(0,2)} ( \mathbf R^n, Y)$, and $o \in \boldsymbol
			\Xi (n,k)$}.
		\end{equation*}

		Then, there exists a polynomial function $P : \mathbf R^n \to
		Y^\ast$ is of degree at most~$k-1$ such that, for $m = 0,
		\ldots, k-1$ and $\xi \in \boldsymbol \Xi (n,m)$, there holds
		\begin{equation*}
			\big | ( \Der^\xi T ) ( \theta ) - {\textstyle\int}
			\langle \theta, \Der^\xi P \rangle \ud \mathscr L^n
			\big | \leq \Gamma \kappa \sup \im \| \Der^i \theta \|
			\quad \text{for $\theta \in \mathscr D_{\mathbf
			B(0,1)} ( \mathbf R^n, Y )$},
		\end{equation*}
		where $0 \leq \Gamma < \infty$ is determined by $i$, $k$,
		and~$n$.
	\end{citing}
\endgroup

If $k=1$, the proof is carried out by comparing $T$ firstly to a convolution
of~$T$ with a suitable $\Phi \in \mathscr D ( \mathbf R^n, \mathbf R )$ and
subsequently to the value at~$0$ of the function representing that
convolution.  The cases $k>1$ then follow inductively.

There seems to be ample room for further development here.  For instance, one
may wish to study possible subsequent embedding results leading to
strengthenings of the estimate in the conclusion.  An intriguing example of
such an improvement (under supplementary hypotheses not available in the
present circumstances) was given by Allard in~\cite[\S\,1]{MR840267},
see~\ref{remark:allards_strong_constancy}.

Finally, we elaborate on the difference in perspectives allowing us to prove
Theorems \hyperlink{thm:A}A--\hyperlink{thm:D}D, despite previous results did
not go beyond the case $n=1$.  Traditionally, distributions $T \in \mathscr D'
( \mathbf R^n, Y )$ are often considered as generalised functions from
$\mathbf R^n$ into $Y^\ast$.  Accordingly, one strives for a local
representation of the distribution as high order distributional partial
derivative of a function exhibiting a corresponding asymptotic behaviour at a
given point (see Łojasiewicz \cite[4.2 Théorème 1$'$]{MR0107167} for general
$n$ but $k=0$, and Pilipović, Stanković, and Takači \cite[Chapter 2, Theorem
9.2]{MR1062876}%
\begin{footnote}%
	{The condition $F^{(m)} = f$ is missing in the conclusion of the
	theorem but occurs in its proof.}
\end{footnote}%
for general asymptotics but $n=1$).  However, to be applicable for the
present purposes, such a representation would not only need to be available
for general $k$ and $n$ but, in fact, \emph{simultaneously} encode the
differentiability properties of the distribution at uncountably many points.
The latter part of such an endeavour appears unnecessarily difficult to us.
(The case $n=1$ is special as representations then are essentially unique.)
Instead, we always treat $T \in \mathscr D' ( \mathbf R^n, Y )$ as continuous
linear functional $T : \mathscr D ( \mathbf R^n, Y ) \to \mathbf R$ and
accordingly, avoiding any representation, just employ the norms dual to
$\boldsymbol \nu_{\mathbf B(0,1)}^i | \mathscr D_{\mathbf B (0,1)} ( \mathbf
R^n, Y )$ in the relevant estimates.  This difference in perspectives is also
reflected in our notation.

\subsection{Envisaged future developments}

\subsubsection*{Geometric measure theory}

The utility to consider the validity of Theorem~\hyperlink{thm:D}D became
apparent during the author's ongoing investigation of a special case of the
varifold regularity problem formulated jointly with Scharrer
in~\cite[Question~3]{arXiv:1709.05504v1}.  Furthermore, the present paper is
the third in a sequence of studies (initiated by the author
in~\cite{MR3936235} and continued by Santilli in~\cite{IUMJ_7645})
that is ultimately directed towards possible higher order pointwise
differentiability properties of stationary integral varifolds.  For
approximate differentiability of second order, the central elliptic partial
differential equation involves as inhomogeneous term precisely a distribution,
that is $\boldsymbol \nu_{\mathbf B(0,1)}^1$~pointwise differentiable of
order~$0$ at all points in a set, that is compact and has positive $\mathscr
L^n$~measure but does not possess further regularity properties
(see~\cite[4.4\,(6)]{MR3023856}).

\subsubsection*{Elliptic partial differential equations}

From the point of view of elliptic partial differential equations, it is
natural to aim to replace the norms~$\boldsymbol \nu_{\mathbf B(0,1)}^i$ in
the present theory by the norms $\nu_{i,p}$ defined by
\begin{equation*}
	\nu_{i,p} ( \phi ) = \big ( {\textstyle\int \| \Der^i \phi \|^p \ud
	\mathscr L^n } \big )^{1/p} \quad \text{for $\phi \in \mathscr
	D_{\mathbf B(0,1)} ( \mathbf R^n, Y )$},
\end{equation*}
whenever $i$ is a nonnegative integer and $1 < p < \infty$.  A
Rademacher-Stepanov type result for these norms with $(k,\alpha) = (-1,1)$,
$i=1$, and $\dim Y < \infty$ was proven by the author
in~\cite[3.13]{MR3023856}.  Furthermore, initial elements of the corresponding
pointwise differentiability theory for weak solutions of (linear and
non-linear) elliptic partial differential equations were provided by him in
\cite[8.4]{MR2898736} and~\cite[3.11, 3.18]{MR3023856}.  Extending these
results to a more complete theory modelled upon that of Calderón and
Zygmund~\cite{MR0136849}, but including certain non-linear equations, appears
natural not only as development within elliptic partial differential equations
but also as case study for the afore-mentioned regularity questions in
geometric measure theory.

\subsubsection*{Theory of distributions}

Following a different line of thought, one might strive to determine optimal
conditions on the Banach space~$Y$ in the spirit of the Radon-Nikodým property
(see~\cite[\S\,5]{MR1727673}) for the validity of the present theory.
Finally, Yoshinaga's reduction (see~\cite[p.\,24]{MR0226400}) of Łojasiewicz's
concept of fixation of variables (see~\cite{MR0107167}) to point values with
$Y = \mathscr D_K ( \mathbf R^m, \mathbf R )$ suggests to additionally include
certain locally convex spaces~$Y$ in the study of possible extensions.

\subsection{Acknowledgement}

The author is grateful to Professor Guido De Philippis for a discussion on
Allard's strong constancy lemma (see \ref{remark:allards_strong_constancy}),
to Professor Bernd Kirchheim for a discussion leading him to find
Łojasiewicz's fundamental papers \cite{MR0087905} and~\cite{MR0107167} to
Professor Ricardo Estrada for pointing out the connection to asymptotic
expansions (see in particular \ref{remark:asymptotic_analysis} and
\ref{def:nu_pt_diff_integral}) and for an alternative idea to prove Theorem
\hyperlink{thm:E}E, see \ref{remark:Estrada}, and to Dr~Sławomir Kolasiński
for his comments on this manuscript.  The initial version of this paper (see
\href{https://arxiv.org/abs/1803.10855v1}{\path{arXiv:1803.10855v1}}) was
written while the author worked at the University of Leipzig and the Max
Planck Institute for Mathematics in the Sciences.

\subsection{Notation}

As a rule, our terminology is that of~\cite[pp.\,669--676]{MR41:1976}; in
particular, the field involved in vectorspaces and linear maps is considered
to be~$\mathbf R$ by default, whenever $\mu$ measures $X$ and $Y$ is a
topological space, a $Y$ valued $\mu$ measurable functions means a function
$f$ with $\dmn f \subset X$, $\im f \subset Y$, and $\mu ( X \without \dmn f )
= 0$ such that $f^{-1} [ V ]$ is $\mu$ measurable, whenever $V$ is an open
subset of $Y$, and
\begin{align*}
	\mu_{(p)} (f) & = \big ( {\textstyle \int} |f|^p \ud \mu \big
	)^{1/p} && \text{if $p < \infty$}, \\
	\mu_{(\infty)} (f) & = \inf \big \{ s \with s \geq 0, \mu \, \{ x
	\with |f(x)|>s \} = 0 \big \} && \text{if $p = \infty$},
\end{align*}
whenever $\mu$ measures $X$, $Z$ is a separable Banach space, $f$ is a
$Z$ valued $\mu$~measurable function, and $1 \leq p \leq \infty$.  The
only exception to this rule is that we employ the more common locally convex
topology on~$\mathscr D ( \mathbf R^n, Y)$ defined for instance
in~\cite[2.13]{MR3528825}; this modified topology however leads to the
same dual space $\mathscr D' ( \mathbf R^n, Y)$ of continuous linear maps $T :
\mathscr D ( \mathbf R^n, Y) \to \mathbf R$ as in
\cite[4.1.1]{MR41:1976}, see \cite[2.17\,(1)]{MR3528825}.
Finally, we additionally employ the term \emph{function of class~$(k,\alpha)$}
defined for $k \geq 0$ in~\cite[2.4]{MR3936235}.

For the convenience of the reader, we next review some of the basic
terminology from multilinear algebra related to our treatment of polynomial
functions (see~\cite[\S\,1.9, \S\,1.10]{MR41:1976}).  Whenever $Z$ is a normed
space and $k \geq 1$, $\bigodot^k ( \mathbf R^n, Z )$ denotes the normed space
of $k$~linear symmetric maps of $( \mathbf R^n )^k$ into~$Z$ with
\begin{equation*}
	\| \phi \| = \sup \big \{ |\phi (v_1, \ldots, v_k)| \with \text{$v_m
	\in \mathbf R^n, |v_m| \leq 1$ for $m=1, \ldots, k$} \big \} \quad
\end{equation*}
for $\phi \in \bigodot^k ( \mathbf R^n, Z )$.  Moreover, $\bigodot^0 (
\mathbf R^n, Z ) = Z$.  The symmetric algebra of $\mathbf R^n$, given by
\begin{equation*}
	{\textstyle \bigodot_\ast \mathbf R^n = \bigoplus_{m=0}^\infty
	\bigodot_m \mathbf R^n},
\end{equation*}
is a commutative associative graded algebra with unit element $1 \in \mathbf R
= \bigodot_0 \mathbf R^n$ and $\odot$ denotes its multiplication.  Whenever
$e_1, \ldots, e_n$ form a basis of~$\mathbf R^n$, the vectors $e^\xi =
(e_1)^{\xi_1} \odot \cdots \odot (e_n)^{\xi_n}$ corresponding to $\xi \in
\boldsymbol \Xi (n,m)$ form a basis of~$\bigodot_m \mathbf R^n$.  Moreover,
the canonical linear isomorphism
\begin{equation*}
	{\textstyle \Hom ( \bigodot_m \mathbf R^n, Z ) \simeq \bigodot^m (
	\mathbf R^n, Z )}
\end{equation*}
maps $h \in \Hom ( \bigodot_m \mathbf R^n, Z )$ onto $\psi \in \bigodot^m (
\mathbf R^n, Z)$ satisfying
\begin{equation*}
	\psi (v_1, \ldots, v_m) = h ( v_1 \odot \cdots \odot v_m ) \quad
	\text{for $v_1, \ldots, v_m \in \mathbf R^n$}.
\end{equation*}
Accordingly, one alternately denotes $h(\eta)$ by $\langle \eta, \psi \rangle$
for $\eta \in \bigodot_m \mathbf R^n$.  The interior multiplications
$\mathop{\lrcorner} : \bigodot_m \mathbf R^n \times \bigodot^k ( \mathbf R^n,
Z ) \to \bigodot^{k-m} ( \mathbf R^n, Z )$ corresponding to $m = 0, \ldots, k$
are characterised by
\begin{equation*}
	\langle \zeta , \eta \mathop{\lrcorner} \psi \rangle = \langle \zeta
	\odot \eta, \psi \rangle \quad {\textstyle\text{for $\zeta \in
	\bigodot_{k-m} \mathbf R^n$, $\eta \in \bigodot_m \mathbf R^n$, and
	$\psi \in \bigodot^k ( \mathbf R^n, Z)$}}.
\end{equation*}
Finally, the norm on~$\bigodot_m \mathbf R^n$ is defined by
\begin{equation*}
	{\textstyle \| \eta \| = \sup \big \{ \langle \eta, \psi \rangle \with
	\psi \in \bigodot^m (\mathbf R^n, \mathbf R), \| \psi \| \leq 1 \big
	\} \quad \text{whenever $\eta \in \bigodot_m \mathbf R^n$}}.
\end{equation*}

\section{Basic properties}

In the present section, we firstly collect the necessary functional analytic
preliminaries in~\ref{def:Y_topology}--\ref{miniremark:setup}.  Then, we
formally introduce our definitions of pointwise differentiability
in~\ref{lemma:uniqueness}--\ref{remark:origin_resetnjak}.  Finally, we derive
basic properties of these concepts along with four examples
in~\ref{lemma:dense_diff_crit}--\ref{example:distribution_representable_by_integration}.

\begin{definition} [see \protect{\cite[p.~420]{MR0117523}}]
\label{def:Y_topology}
	Suppose $Y$ is a Banach space.

	Then, the topology on $Y^\ast$ inherited from~$\mathbf R^Y$ is termed
	the \emph{$Y$~topology}.
\end{definition}

\begin{miniremark} \label{miniremark:lebesgue_duality}
	Suppose $\mu$ measures $X$, the set $X$ may be expressed as union
	of a countable family whose members are $\mu$ measurable sets with
	finite $\mu$ measure, $Y$ is a separable Banach space, and
	\begin{equation*}
		H : \mathbf L_1 ( \mu, Y ) \cap \{ \theta \with \dmn \theta =
		X \} \to \mathbf R
	\end{equation*}
	is a linear homomorphism for which $M = \sup \{ | H(\theta) | \with
	\mu_{(1)} (\theta) \leq 1 \} < \infty$.%
	\begin{footnote}%
		{The intersection with $\{ \theta \with \dmn \theta = X
		\}$ may not be omitted as, with the definition given in
		\cite[2.4.12]{MR41:1976}, $\mathbf L_1 ( \mu, Y )$
		may fail to be a vector space as no $\theta$ whose domain is a
		proper subset of $X$ has an additive inverse in $\mathbf L_1 (
		\mu, Y)$.  Similar remarks hold for $\mathbf L_p ( \mu, Y)$
		and $\mathbf A ( \mu, Y )$.}
	\end{footnote}%
	Then, there exists a $\mu$~almost unique, $Y^\ast$~valued function $g$
	that is $\mu$ measurable with respect to the $Y$~topology and
	satisfies $\mu_{(\infty)} ( \| g \|) = M$ and
	\begin{equation*}
		H (\theta) = {\textstyle\int \langle \theta, g \rangle \ud \mu
		} \quad \text{whenever $\theta \in \mathbf L_1 ( \mu, Y )$ and
		$\dmn \theta = X$};
	\end{equation*}
	in fact, this is a special case of \cite[Chapter~7,
	Section~4]{MR0276438}.%
	\begin{footnote}
		{More elementary, one may pass from the case $Y = \mathbf R$
		treated in~\cite[2.5.7\,(ii)]{MR41:1976} to the general case
		adapting the method of~\cite[2.5.12]{MR41:1976}; this is
		carried out in~\cite[4.6.3~2]{snulmenn:diploma_thesis}.}
	\end{footnote}%
	In \ref{example:nonmeasurable}, an example showing that $g$
	may fail to be $\mu$ measurable with respect to the norm topology on
	$Y$ will be provided.
\end{miniremark}

\begin{definition}
	Suppose $Y$ and $Z$ are normed spaces.

	Then, we define $\| \tau \|$ for $\tau \in Y \otimes Z$ to be the
	infimum of the set of numbers
	\begin{equation*}
		\sum_{i=1}^N | y_i | \, | z_i |
	\end{equation*}
	corresponding to all positive integers $N$ and $y_i \in Y$, $z_i \in
	Z$ for $i = 1, \ldots, N$ satisfying $\tau = \sum_{i=1}^N y_i \otimes
	z_i$.
\end{definition}

\begin{remark} \label{remark:projective_norm}
	As in~\cite[Proposition 2.1, Theorem 2.9]{MR1888309}, where the
	slightly more elaborate case of Banach spaces is treated, we see that
	the function $\| \cdot \|$ is a norm on~$Y \otimes Z$ inducing a
	linear isometry of $( Y \otimes Z)^\ast$ with the space of continuous
	bilinear maps from $Y \times Z$ into~$\mathbf R$.
\end{remark}

\begin{remark} \label{remark:tensor_sum}
	As in the case of vectorspaces (see~\cite[1.1.2]{MR41:1976}), if $Z_1$
	and $Z_2$ are normed spaces, then $Y \otimes ( Z_1
	\oplus Z_2 ) \simeq ( Y \otimes Z_1 ) \oplus (
	Y \otimes Z_2 )$ as normed spaces.
\end{remark}

\begin{remark} \label{remark:tensor_completeness}
	If $Y$ is complete and $\dim Z < \infty$, then $Y \otimes Z$ is
	complete by~\ref{remark:tensor_sum}.
\end{remark}

\begin{remark} \label{remark:mass_norm}
	The norm $\| \cdot \|$ on $\bigodot_m \mathbf R^n$ renders the
	canonical linear isomorphism $\Hom ( \bigodot_m \mathbf R^n, Z )
	\simeq \bigodot^m ( \mathbf R^n, Z )$ an isometry whenever $Z$ is a
	normed space; in fact, this follows as in~\ref{remark:projective_norm}
	from the characterisation of~$\| \cdot \|$
	in~\cite[1.10.5]{MR41:1976}.
\end{remark}

\begin{miniremark} \label{miniremark:setup}
	Suppose $Y$ is a Banach space and $Z_m = \bigodot^m ( \mathbf R^n,
	Y^\ast)$ whenever $m$ is a nonnegative integer.  Then, the normed
	space $Y \otimes \bigodot_m \mathbf R^n$ is complete
	by~\ref{remark:tensor_completeness} and we will employ the canonical
	linear isometry (see \ref{remark:projective_norm}
	and~\ref{remark:mass_norm})
	\begin{equation*}
		{\textstyle Z_m \simeq ( Y \otimes \bigodot_m \mathbf R^n
		)^\ast}
	\end{equation*}
	and the \emph{$Y \otimes \bigodot_m \mathbf R^n$~topology on~$Z_m$}
	defined by requiring this isometry to be a homeomorphism with the $Y
	\otimes \bigodot_m \mathbf R^n$ topology on $( Y \otimes \bigodot_m
	\mathbf R^n )^\ast$.  The compact topology inherited from this
	topology by $Z_m \cap \{ \psi \with \| \psi \| \leq 1 \}$ is
	metrisable if and only if $Y$ is separable (see
	\ref{remark:tensor_sum} and~\cite[V.4.2, V.5.1]{MR0117523}).  In this
	case, the class of Borel sets of the $Y \otimes \bigodot_m \mathbf
	R^n$ topology on $Z_m$ is generated by the family consisting of all
	sets
	\begin{equation*}
		Z_m \cap \{ \psi \with \langle y \otimes \eta, \psi \rangle <
		\kappa \}
	\end{equation*}
	corresponding to $y \in Y$, $\eta \in \bigodot_m \mathbf R^n$, and
	$\kappa \in \mathbf R$; in fact, as $Y \otimes \bigodot_m \mathbf R^n$
	is spanned by such $y \otimes \eta$, one may
	apply~\cite[2.22]{MR3528825}.%
	\begin{footnote}
		{More elementary, one may recall the fifth paragraph of the
		proof of~\cite[2.5.12]{MR41:1976}.}
	\end{footnote}%
	If $Y^\ast$ is separable in its norm topology, then so is $Z_m$ and
	hence the classes of Borel sets in $Z_m$ with respect to the $Y
	\otimes \bigodot_m \mathbf R^n$~topology and the norm topology agree.
\end{miniremark}

\begin{lemma} \label{lemma:uniqueness}
	Suppose $k$ is a nonnegative integer, $P : \mathbf R^n \to \mathbf R$
	is a polynomial function, $\Delta$ is a dense subset of $\mathscr D (
	\mathbf R^n, \mathbf R )$, $a \in \mathbf R^n$, and
	\begin{equation*}
		\lim_{r \to 0+} r^{-k-n} {\textstyle\int \phi ( r^{-1} (x-a) )
		P (x) \ud \mathscr L^n \, x } = 0 \quad \text{for $\phi \in
		\Delta$}.
	\end{equation*}

	Then, $\Der^m P (a) = 0$ for $m=0, \ldots, k$.
\end{lemma}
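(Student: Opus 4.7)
The plan is to reduce the statement to a polynomial identity in the scaling parameter $r$. Changing variables via $x = a+ry$ transforms the integral into
\begin{equation*}
	r^{-k-n} {\textstyle\int} \phi(r^{-1}(x-a)) P(x) \ud \mathscr L^n\, x = r^{-k} {\textstyle\int} \phi(y) P(a+ry) \ud \mathscr L^n \, y.
\end{equation*}
Using the Taylor expansion of the polynomial function $P$ at $a$, one writes $P(a+ry) = \sum_{m=0}^{d} r^m Q_m(y)$, where $d = \degree P$ and $Q_m$ is the homogeneous polynomial of degree $m$ in $y$ given by $Q_m(y) = (m!)^{-1} \Der^m P(a)(y,\ldots,y)$. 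Substituting, for each fixed $\phi \in \Delta$ the quantity in the hypothesis equals
\begin{equation*}
	F_\phi(r) = {\textstyle \sum_{m=0}^{d} } r^{m-k} c_m(\phi), \qquad c_m(\phi) = {\textstyle\int} \phi(y) Q_m(y) \ud \mathscr L^n \, y.
\end{equation*}

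Next, I would extract the coefficients. Since $F_\phi(r) \to 0$ as $r \to 0+$, the terms with negative exponent must vanish individually: if $m_0$ were the smallest index in $\{0,\ldots,k-1\}$ with $c_{m_0}(\phi) \neq 0$, the factor $r^{m_0-k}$ would force $|F_\phi(r)| \to \infty$. Hence $c_m(\phi) = 0$ for $m = 0,\ldots,k-1$, whereupon the limit reduces to $c_k(\phi)$, forcing $c_k(\phi) = 0$ too. Thus $c_m(\phi) = 0$ for $m = 0,\ldots,k$ and all $\phi \in \Delta$.

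Finally, each functional $\phi \mapsto c_m(\phi)$ is a distribution on $\mathbf R^n$ (since $Q_m$ is a locally integrable function), hence continuous on $\mathscr D(\mathbf R^n, \mathbf R)$. As $\Delta$ is dense and $c_m$ vanishes on $\Delta$, we conclude $c_m(\phi) = 0$ for every $\phi \in \mathscr D(\mathbf R^n, \mathbf R)$. By the fundamental lemma of the calculus of variations this means $Q_m \equiv 0$ as a function for $m = 0,\ldots,k$, and since $Q_m$ is the $m$-homogeneous part of the Taylor expansion, this is equivalent to $\Der^m P (a) = 0$ for each such $m$. There is no real obstacle here; the only mildly subtle point is the extraction of each coefficient $c_m(\phi)$ from the single limit hypothesis, handled by the order-of-vanishing argument above.
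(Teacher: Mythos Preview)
Your proof is correct. Both your argument and the paper's begin with the same change of variables $x = a + ry$, rewriting the expression as $r^{-k}\int \phi(y)\, P(a+ry)\ud \mathscr L^n\, y$. From there the paper proceeds more abstractly: after replacing $P$ by its degree-$k$ Taylor polynomial at $a$, it notes that the finite-dimensional space $Z$ of polynomials of degree at most $k$ embeds homeomorphically into $\mathbf R^\Delta$ via $Q \mapsto \big(\int \phi\, Q\ud\mathscr L^n\big)_{\phi \in \Delta}$ (any linear injection out of a finite-dimensional space is a topological embedding), so the hypothesis yields $P_r \to 0$ in $Z$, where $P_r(x) = r^{-k}P(a+rx)$, and hence $P = 0$. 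Your route is more elementary and explicit: you expand $P(a+ry)$ as a polynomial in $r$, extract each coefficient $c_m(\phi)$ by an order-of-vanishing argument, and then invoke density of $\Delta$ together with continuity of each $c_m$ on $\mathscr D(\mathbf R^n,\mathbf R)$. Both arguments rest on the same two ingredients---finite-dimensionality of the relevant polynomial space and density of $\Delta$---but you unpack them by hand, while the paper packages them into a single topological statement.
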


\begin{proof}
	Possibly replacing $P(x)$ by $\sum_{m=0}^k \langle (x-a)^m/m!, \Der^m
	P(a) \rangle$, we assume $\degree P \leq k$.  We let $Z$ denote the
	vectorspace of all polynomial functions $Q : \mathbf R^n \to \mathbf
	R$ of degree at most~$k$ endowed with the norm whose value at $Q$
	equals $\sum_{m=0}^k \| \Der^m Q (0) \|$.  Since $\dim Z < \infty$,
	\cite[Chapter 1, p.~13, Theorem~2]{MR910295} yields that $Z$ may be
	homeomorphically embedded into $\mathbf R^\Delta$ by associating to $Q
	\in Z$ the function with value $\int \phi \, Q \ud \mathscr L^n$ at
	$\phi \in \Delta$.  Hence, as
	\begin{equation*}
		r^{-k-n} {\textstyle\int \phi ( r^{-1} (x-a) ) P (x) \ud
		\mathscr L^n \, x } = {\textstyle\int} \phi (x) P_r (x) \ud
		\mathscr L^n \, x,
	\end{equation*}
	where $P_r (x) = r^{-k} P(a+rx)$, we conclude $P_r \to 0$ as $r \to
	0+$ and $P = 0$.
\end{proof}

\begin{definition} \label{def:pt_diff_integral}
	Suppose $k$~is an integer with $k \geq -1$, $Y$ is a Banach space, $T
	\in \mathscr D' ( \mathbf R^n, Y )$, and $a \in \mathbf R^n$.

	Then, $T$ is termed \emph{pointwise differentiable of order~$k$
	at~$a$} if and only if there exists a polynomial function $P : \mathbf
	R^n \to Y^\ast$ of degree at most~$k$ satisfying
	\begin{equation*}
		\lim_{r \to 0+} r^{-k-n} (T-S)_x \big ( \phi (r^{-1} (x-a))
		\big) = 0 \quad \text{for $\phi \in \mathscr D ( \mathbf R^n,
		Y )$},
	\end{equation*}
	where $S \in \mathscr D' ( \mathbf R^n, Y )$ is defined by $S ( \phi )
	= \int \langle \phi, P \rangle \ud \mathscr L^n$ for $\phi \in
	\mathscr D ( \mathbf R^n, Y )$; here, by convention, a polynomial
	function of degree at most~$-1$ is the zero function.  As $P$ is
	unique by~\ref{lemma:uniqueness}, we term $S$ the \emph{$k$~jet of $T$
	at~$a$} and, if $k \geq 0$, we define the \emph{$k$-th order pointwise
	differential of~$T$ at~$a$} by  $\pt \Der^k T (a) = \Der^k P (a)$.
\end{definition}

\begin{remark} \label{remark:pt_diff}
	Whenever $m = 0, \ldots, k$, it follows that $T$ is pointwise
	differentiable of order~$m$ and $\pt \Der^m T (a) = \Der^m P (a)$.
\end{remark}

\begin{remark} \label{remark:distributional_derivative}
	If $T$ is pointwise differentiable of order~$k$ at~$a$ and $S$~is the
	$k$~jet of~$T$ at~$a$, then, for $m = 0, \ldots, k$ and $\xi \in
	\boldsymbol \Xi (n,m)$, the distribution $\Der^\xi T$ is pointwise
	differentiable of order~$k-m$ at~$a$, $\Der^\xi S$~is the $k-m$~jet
	of~$\Der^\xi T$ at~$a$ and, denoting by $e_1, \ldots, e_n$ the
	standard basis of~$\mathbf R^n$, we have
	\begin{equation*}
		\pt \Der^\mu (\Der^\xi T) (a) = e^\xi \mathop{\lrcorner} \pt
		\Der^{m+\mu} T (a) \quad \text{for $\mu = 0, \ldots, k-m$},
	\end{equation*}
	where $e^\xi = (e_1)^{\xi_1} \odot \cdots \odot (e_n)^{\xi_n}$ and the
	powers are computed in $\bigodot_\ast \mathbf R^n$.  The converse type
	of implication will be treated in
	\ref{thm:pt_diff_derivatives}\,\eqref{item:pt_diff_derivatives:top_integral}
	and~\ref{corollary:characterisation_pt_diff}.
\end{remark}

\begin{remark} \label{remark:lojasiewicz_pt_value}
	For $k = 0$, $Y = \mathbf C$, and $\mathbf C$~linear $T$, this concept
	was introduced in~\cite[\S\,3]{MR0107167}; the subcase $n=1$ had
	appeared before in~\cite[1.1]{MR0087905}.
\end{remark}

\begin{remark} \label{remark:expansion}
	Suppose additionally $k \geq 0$ and $Q : \mathbf R^n \to Y^\ast$
	is a homogeneous polynomial function of degree $k$.  Then, $T$ is
	pointwise differentiable of order $k$ at $a$ with $\pt \Der^k T(a) =
	\Der^k Q (0)$ if and only if $T$ is pointwise differentiable of order
	$k-1$ at $a$ and, denoting the $k-1$ jet of $T$ by $R$,
	\begin{equation*}
		\lim_{r \to 0+} r^{-k-n} (T-R)_x \big ( \phi (r^{-1}(x-a))
		\big ) = {\textstyle\int} \langle \phi, Q \rangle \ud \mathscr
		L^n \quad \text{for $\phi \in \mathscr D ( \mathbf R^n, Y)$}.
	\end{equation*}
\end{remark}

\begin{remark} \label{remark:asymptotic_analysis}
	Suppose $a = 0$.  Then, employing the terminology of \cite[Definition
	1]{MR3429948}, the limit condition in \ref{remark:expansion} yields
	that $T-R$ has ``quasiasymptotics at zero'' on $\mathscr D ( \mathbf
	R^n, Y )$ with respect to $\rho$ along the group of homotheties on
	$\mathbf R^n$, where $\rho : \{ r \with 0 < r < \infty \} \to \mathbf
	R$ satisfies $\rho (r)=r^k$ for $0 < r < \infty$.  Correspondingly
	adapting the terminology of \cite[\S 4,
	Definition]{MR0435696-english},%
	\begin{footnote}%
		{The Russian original is \cite{MR0435696}.}
	\end{footnote}%
	pointwise differentiability of order $k$ at $a$ may be viewed as
	``closed quasi-asymptotic expansion of order $k$ of length $0$'' in
	terms of the $k$ jet of $T$ at zero.
\end{remark}

\begin{definition} \label{def:nu_pt_diff_integral}
	Suppose $k$~is a nonnegative integer, $Y$ is a Banach space, $T \in
	\mathscr D' ( \mathbf R^n, Y )$, $\nu$ is a seminorm whose
	restriction to $\mathscr D_{\mathbf B(0,1)} ( \mathbf R^n, Y )$ is a
	norm on $\mathscr D_{\mathbf B(0,1)} ( \mathbf R^n, Y )$, and $a \in
	\mathbf R^n$.

	Then, $T$ is termed \emph{$\nu$~pointwise differentiable of order~$k$
	at~$a$} if and only if there exists a polynomial function $P : \mathbf
	R^n \to Y^\ast$ of degree at most~$k$ satisfying
	\begin{equation*}
		\lim_{r \to 0+} \sup \big \{ r^{-k-n} (T-S)_x \big ( \phi
		(r^{-1} (x-a)) \big) \with \phi \in \mathscr D_{\mathbf
		B(0,1)} ( \mathbf R^n, Y ), \nu ( \phi ) \leq 1 \big \} = 0,
	\end{equation*}
	where $S \in \mathscr D' ( \mathbf R^n, Y )$ is defined by $S ( \phi )
	= \int \langle \phi, P \rangle \ud \mathscr L^n$ for $\phi \in
	\mathscr D ( \mathbf R^n, Y )$.
\end{definition}

\begin{remark} \label{remark:nu_pt_diff}
	In this case, $T$ is pointwise differentiable of order~$k$ at~$a$ and
	$S$ is the $k$~jet of~$T$ at~$a$.
\end{remark}

\begin{remark}
	In the present paper, this notion will usually be employed with $\nu =
	\boldsymbol \nu_{\mathbf B(0,1)}^i$ for some nonnegative integer~$i$.
	Adding the parameter $1 \leq p \leq \infty$, the family of norms
	$\nu_{i,p}$ on $\mathscr D_{\mathbf B (0,1)} ( \mathbf R^n, Y )$
	defined by
	\begin{equation*}
		\nu_{i,p} ( \phi ) = ( \mathscr L^n )_{(p)} ( \Der^i \phi )
		\quad \text{whenever $\phi \in \mathscr D_{\mathbf B(0,1)} (
		\mathbf R^n, Y )$}
	\end{equation*}
	could be studied; in fact, for the case $k=0$, $n=1$, $Y = \mathbf C$,
	and $\mathbf C$~linear $T$, the subcase $i=0$ and $p = 1$ of the
	concept occurred in~\cite[Lemma~3]{MR0218896} and a basic
	characterisation of the subcase $i=0$ and $p < \infty$ was obtained
	in~\cite[Theorem~10]{MR3261225}.
\end{remark}

\begin{remark} \label{remark:value_of_some_order}
	The case $\nu = \boldsymbol \nu_{\mathbf B(0,1)}^i$, $k=0$, $Y =
	\mathbf C$, and $\mathbf C$~linear~$T$ of this concept was introduced
	as ``value of~$T$ at~$a$ of order not exceeding~$i$''
	in~\cite[\S\,8.2]{MR0107167}; the subcase $n=1$ had appeared before
	in~\cite[\S\,2.4 Définition, \S\,4.4 Théorème]{MR0087905}.
\end{remark}

\begin{definition} \label{def:pt_diff_hoelder}
	Suppose $k$~is an integer, $0 < \alpha \leq 1$, $k+\alpha \geq 0$, $Y$
	is a Banach space, $T \in \mathscr D' ( \mathbf R^n, Y )$, and $a \in
	\mathbf R^n$.

	Then, $T$ is termed \emph{pointwise differentiable of
	order~$(k,\alpha)$ at~$a$} if and only if there exists a polynomial
	function $P : \mathbf R^n \to Y^\ast$ of degree at most~$k$ satisfying
	\begin{equation*}
		\limsup_{r \to 0+} r^{-k-\alpha-n} (T-S)_x \big ( \phi (r^{-1}
		(x-a)) \big) < \infty \quad \text{for $\phi \in \mathscr D (
		\mathbf R^n, Y )$},
	\end{equation*}
	where $S \in \mathscr D' ( \mathbf R^n, Y )$ is defined by $S ( \phi )
	= \int \langle \phi, P \rangle \ud \mathscr L^n$ for $\phi \in
	\mathscr D ( \mathbf R^n, Y )$.
\end{definition}

\begin{remark} \label{remark:uniform_boundedness}
	In this case, $T$ is pointwise differentiable of order~$k$ at~$a$, $S$
	is the $k$~jet of~$T$ at~$a$, and, whenever $K$ is a compact subset
	of~$\mathbf R^n$ and $0 < s < \infty$, there exist a nonnegative
	integer~$i$ and $0 \leq M < \infty$ satisfying
	\begin{equation*}
		\big | r^{-k-\alpha-n} (T-S)_x \big ( \phi (r^{-1} (x-a)) \big
		) \big | \leq M \boldsymbol \nu_K^i ( \phi ) \quad \text{for
		$\phi \in \mathscr D_K ( \mathbf R^n, Y )$}
	\end{equation*}
	whenever $0 < r \leq s$; in fact, defining $R_r : \mathscr D_K (
	\mathbf R^n, Y ) \to \mathbf R$ by
	\begin{equation*}
		R_r ( \phi ) = r^{-k-\alpha-n} ( T-S )_x \big ( \phi ( r^{-1}
		(x-a)) \big ) \quad \text{for $0 < r \leq s$ and $\phi \in
		\mathscr D_K ( \mathbf R^n, Y )$}
	\end{equation*}
	and noting the continuity of~$R_r ( \phi )$ in~$r$, we have
	\begin{equation*}
		\sup \{ |R_r(\phi)| \with 0 < r \leq s \} < \infty \quad
		\text{for $\phi \in \mathscr D_K ( \mathbf R^n, Y )$},
	\end{equation*}
	so that the last part follows from the principle of uniform
	boundedness (see~\cite[II.1.11]{MR0117523}) since $\mathscr D_K (
	\mathbf R^n, Y)$ is an ``$F$-space'' in the terminology
	of~\cite[II.1.10]{MR0117523} (see, e.g., \cite[2.14]{MR3528825} and
	\cite[2.4]{MR3626845}).
\end{remark}

\begin{remark} \label{remark:zielezny_bounded}
	The case $n=1$, $k=-1$, $\alpha=1$, $Y = \mathbf C$, and $\mathbf
	C$~linear $T$ of this concept was introduced in~\cite[\S\,1.1
	Definition]{MR0113134}.
\end{remark}

\begin{definition} \label{def:nu_pt_diff_hoelder}
	Suppose $k$~is an integer, $0 < \alpha \leq 1$, $k+\alpha \geq 0$, $Y$
	is a Banach space, $T \in \mathscr D' ( \mathbf R^n, Y )$, $\nu$
	is a seminorm whose restriction to $\mathscr D_{\mathbf B(0,1)} (
	\mathbf R^n, Y )$ is a norm on $\mathscr D_{\mathbf B(0,1)} ( \mathbf
	R^n, Y )$, and $a \in \mathbf R^n$.

	Then, $T$ is termed \emph{$\nu$~pointwise differentiable of
	order~$(k,\alpha)$ at~$a$} if and only if there exists a polynomial
	function $P : \mathbf R^n \to Y^\ast$ of degree at most~$k$ satisfying
	\begin{multline*}
		\limsup_{r \to 0+} \sup \big \{ r^{-k-\alpha-n} (T-S)_x \big (
		\phi (r^{-1} (x-a)) \big) \with \phi \in \mathscr D_{\mathbf
		B(0,1)} ( \mathbf R^n, Y ), \nu ( \phi ) \leq 1 \big \} \\
		< \infty,
	\end{multline*}
	where $S \in \mathscr D' ( \mathbf R^n, Y )$ is defined by $S ( \phi )
	= \int \langle \phi, P \rangle \ud \mathscr L^n$ for $\phi \in
	\mathscr D ( \mathbf R^n, Y )$.
\end{definition}

\begin{remark} \label{remark:norm_differentiability}
	In this case, $T$ is pointwise differentiable of order~$(k,\alpha)$
	at~$a$ and $S$~is the $k$~jet of~$T$ at~$a$
	by~\ref{remark:uniform_boundedness}.  Hence,
	\ref{remark:uniform_boundedness} also yields the following
	proposition.  \emph{Whenever $k$~is an integer, $0 < \alpha \leq 1$,
	$k+\alpha \geq 0$, $Y$ is Banach space, $T \in \mathscr D' ( \mathbf
	R^n, Y )$, and $a \in \mathbf R^n$, the distribution $T$ is pointwise
	differentiable of order~$(k,\alpha)$ at~$a$ if and only if, for some
	nonnegative integer~$i$, it is $\boldsymbol \nu_{\mathbf B
	(0,1)}^i$~pointwise differentiable of order~$(k,\alpha)$ at~$a$}.  The
	case $n=1$, $k=-1$, $\alpha = 1$, $Y = \mathbf C$, and $\mathbf
	C$~linear $T$ thereof also follows from~\cite[\S\,1.1
	Satz\,1.1]{MR0113134}.
\end{remark}

\begin{remark} \label{remark:bounded_of_some_order}
	According to the characterisation~\cite[Satz 1.5]{MR0113134}, the case
	$n=1$, $k=-1$, $\alpha = 1$, $Y=\mathbf C$, $\mathbf C$~linear $T$,
	and $\nu = \boldsymbol \nu_{\mathbf B (0,1)}^i$ of our concept is
	equivalent to the notion of ``boundedness of order not exceeding~$i$''
	of~\cite[\S\,1.3 Definition]{MR0113134}.
\end{remark}

\begin{remark} \label{remark:origin_resetnjak}
	Our notions of pointwise differentiability of higher order (see
	\ref{def:pt_diff_integral}, \ref{def:nu_pt_diff_integral},
	\ref{def:pt_diff_hoelder}, and \ref{def:nu_pt_diff_hoelder}) are
	adaptations of similar concepts for functions in
	\cite[p.~294]{MR0225159-english} to distributions.
\end{remark}

\begin{definition}
	A subset $A$ of a topological space $X$ is termed \emph{sequentially
	dense} if and only if each point in $X$ is the limit of some sequence
	in $A$.
\end{definition}

\begin{lemma} \label{lemma:dense_diff_crit}
	Suppose $k$~is an integer, $k \geq -1$, $Y$ is a Banach space, $\Delta$
	is a sequentially dense subset of~$\mathscr D ( \mathbf R^n, Y )$, $T
	\in \mathscr D' ( \mathbf R^n, Y )$, $a \in \mathbf R^n$, $T$ is
	pointwise differentiable of order $(k,1)$ at~$a$, and $P : \mathbf R^n
	\to Y^\ast$ is a polynomial function of degree at most~$k+1$
	satisfying
	\begin{equation*}
		\lim_{r \to 0+} r^{-k-1-n} (T-S)_x \big ( \phi ( r^{-1} (x-a)
		) \big ) = 0 \quad \text{for $\phi \in \Delta$},
	\end{equation*}
	where $S \in \mathscr D' ( \mathbf R^n, Y )$ is defined by $S ( \phi )
	= \int \langle \phi, P \rangle \ud \mathscr L^n$ for $\phi \in
	\mathscr D ( \mathbf R^n, Y )$.
	
	Then, $T$~is pointwise differentiable of order~$k+1$ at~$a$ and $S$~is
	the $k+1$~jet of~$T$ at~$a$.  If, additionally, we have that $\dim Y <
	\infty$, that $i$ is a nonnegative integer, and that $T$ is
	$\boldsymbol \nu_{\mathbf B(0,1)}^i$ pointwise differentiable of
	order~$(k,1)$ at $a$, then $T$~is $\boldsymbol \nu_{\mathbf
	B(0,1)}^{i+1}$ pointwise differentiable of order~$k+1$ at~$a$.
\end{lemma}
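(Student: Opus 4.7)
The plan is to bootstrap the convergence on the sequentially dense set~$\Delta$ into convergence on all of $\mathscr D(\mathbf R^n, Y)$, and afterwards to uniform convergence on $\boldsymbol \nu_{\mathbf B(0,1)}^{i+1}$-bounded families. Let $\tilde S$ denote the $k$~jet of~$T$ at~$a$ (which exists because pointwise differentiability of order~$(k,1)$ entails pointwise differentiability of order~$k$ by~\ref{remark:uniform_boundedness}), represented by a polynomial $Q : \mathbf R^n \to Y^\ast$ of degree at most~$k$, and abbreviate $f_r (\phi) = r^{-k-1-n} (T-S)_x ( \phi (r^{-1} (x-a)) )$.

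The first key step is to show $\Der^m (P-Q)(a) = 0$ for $m = 0, \ldots, k$. A change of variables turns $r^{-k-1-n} (S - \tilde S)_x ( \phi (r^{-1} (x-a)) )$ into the finite sum $\sum_{m=0}^{k+1} r^{m-k-1} \beta_m (\phi)$, where
\begin{equation*}
    \beta_m (\phi) = (m!)^{-1} {\textstyle\int} \langle \phi (y), \langle y^m, \Der^m (P-Q)(a) \rangle \rangle \ud \mathscr L^n \, y.
\end{equation*}
By~\ref{remark:uniform_boundedness} applied to $T - \tilde S$, the convergence $f_r (\phi) \to 0$ on~$\Delta$ forces the displayed sum to remain bounded as $r \to 0+$ for every $\phi \in \Delta$, so the negative power contributions must vanish: $\beta_m (\phi) = 0$ for $m = 0, \ldots, k$ and $\phi \in \Delta$. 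Sequential density and continuity of~$\beta_m$ then propagate this to $\phi \in \mathscr D(\mathbf R^n, Y)$, forcing the polynomial $y \mapsto \langle y^m, \Der^m (P-Q)(a) \rangle$ to vanish identically on~$\mathbf R^n$, whence $\Der^m (P-Q)(a) = 0$.

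Consequently $| r^{-k-1-n} (S - \tilde S)_x ( \phi ( r^{-1} (x-a) ) ) | \leq C \boldsymbol \nu_K^0 ( \phi )$ for $\phi \in \mathscr D_K (\mathbf R^n, Y)$, and combining with the estimate from~\ref{remark:uniform_boundedness} produces a uniform bound $| f_r ( \phi ) | \leq M' \boldsymbol \nu_K^{i'} ( \phi )$ on $\mathscr D_K (\mathbf R^n, Y)$ for $0 < r \leq s$. A standard density argument then finishes the first assertion: given $\phi \in \mathscr D_K$, choose $\phi_j \in \Delta$ with $\phi_j \to \phi$ in $\mathscr D$ (so all supports lie in a common compact $K' \supseteq K$), write $f_r (\phi) = f_r (\phi_j) + f_r (\phi - \phi_j)$, bound the second summand uniformly in~$r$ by the analogue of the preceding estimate on~$K'$ (a quantity tending to~$0$), and finally let $r \to 0+$; uniqueness of~$S$ as the $k+1$~jet then follows from~\ref{lemma:uniqueness}.

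Under the additional hypothesis $\dim Y < \infty$, the very same polynomial analysis combined with the $\boldsymbol \nu_{\mathbf B(0,1)}^i$~pointwise differentiability of order~$(k,1)$ hypothesis yields $| f_r ( \phi ) | \leq M'' \boldsymbol \nu_{\mathbf B(0,1)}^i ( \phi )$ for $\phi \in \mathscr D_{\mathbf B(0,1)} (\mathbf R^n, Y)$ and all sufficiently small~$r$. Arzelà-Ascoli now enters, and this is precisely where $\dim Y < \infty$ is needed: the unit ball $B = \{ \phi \in \mathscr D_{\mathbf B(0,1)} (\mathbf R^n, Y) \with \boldsymbol \nu_{\mathbf B(0,1)}^{i+1} ( \phi ) \leq 1 \}$ is totally bounded in the $\boldsymbol \nu_{\mathbf B(0,1)}^i$~metric. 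Covering~$B$ by finitely many such balls of radius $\epsilon / ( 2 M'' )$ around centres $\phi_1, \ldots, \phi_N \in B$, invoking the convergence $f_r ( \phi_\ell ) \to 0$ for $\ell = 1, \ldots, N$ from the first assertion, and controlling the residuals via the uniform bound yields $\sup_{\phi \in B} | f_r (\phi) | < \epsilon$ for all sufficiently small~$r$, which is the desired $\boldsymbol \nu_{\mathbf B(0,1)}^{i+1}$~pointwise differentiability of order~$k+1$. I anticipate the main obstacle to be the polynomial moment analysis establishing $\Der^m (P-Q)(a) = 0$ for $m \leq k$; once that vanishing is secured, the remainder is routine density plus Arzelà-Ascoli, with $\dim Y < \infty$ entering only at the compactness step.
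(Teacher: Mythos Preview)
Your proof is correct and follows essentially the same route as the paper's: establish that $\Der^m P(a)$ agrees with the $k$-jet of~$T$ for $m \leq k$, combine this with the uniform bound from~\ref{remark:uniform_boundedness} to get $|f_r(\phi)| \leq M' \boldsymbol\nu_K^{i'}(\phi)$, run the density argument, and then invoke Arzelà--Ascoli (using $\dim Y < \infty$) for the postscript. The only cosmetic difference is that the paper obtains the jet agreement by a direct appeal to~\ref{lemma:uniqueness}, whereas you unpack the same mechanism via the explicit moment expansion $\sum_m r^{m-k-1}\beta_m(\phi)$; your version has the minor advantage of handling the $Y^\ast$-valued case transparently.
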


\begin{proof}
	Suppose $\theta \in \mathscr D ( \mathbf R^n, Y )$.  Taking $\phi_j
	\in \Delta$ with $\phi_j \to \theta$ as $j \to \infty$, there exists
	(see, e.g., \cite[2.14, 2.15]{MR3528825}) a number $0 \leq t < \infty$
	with $\spt \phi_j \subset K$ for every positive integer~$j$,
	where $K = \mathbf R^n \cap \mathbf B ( 0,t )$.  Applying
	\ref{remark:uniform_boundedness} with $\alpha = 1 = s$, we obtain $i$
	and $M$.  Since $\pt \Der^m T (a) = \pt \Der^m S (a)$ for $m = 0,
	\ldots, k$ by \ref{lemma:uniqueness} and
	\begin{equation*}
		{\textstyle \int_{\mathbf B (a,rt)}} |x-a|^{k+1} (k+1)!^{-1}
		\| \Der^{k+1} P ( a ) \| \ud \mathscr L^n \, x = N r^{n+k+1},
	\end{equation*}
	where $N = (k+1)!^{-1} \mathscr H^{n-1} ( \mathbf S^{n-1} )
	(n+k+1)^{-1} t^{n+k+1} \| \Der^{k+1} P ( a ) \|$, we conclude
	\begin{equation*}
		\big | r^{-k-1-n} (T-S)_x \big ( \phi ( r^{-1} (x-a) ) \big )
		\big | \leq (M+N) \boldsymbol \nu_K^i ( \phi )
	\end{equation*}
	whenever $\phi \in\mathscr D_K ( \mathbf R^n, Y )$ and $0 < r \leq 1$.
	Consequently, we have
	\begin{equation*}
		\limsup_{r \to 0+} \big | r^{-k-1-n} (T-S)_x \big ( \theta (
		r^{-1} (x-a) ) \big ) \big | \leq (M+N) \boldsymbol \nu_K^i (
		\theta-\phi_j )
	\end{equation*}
	whenever $j$ is a positive integer, whence the principal conclusion
	follows.
	
	Noting \ref{remark:pt_diff} and~\ref{remark:norm_differentiability},
	the hypotheses of the postscript yield $0 \leq M < \infty$ and $s > 0$
	such that, whenever $0 < r \leq s$, we have
	\begin{equation*}
		\big | r^{-k-1-n} (T-S)_x \big ( \phi ( r^{-1} (x-a) ) \big )
		\big | \leq M \boldsymbol \nu_{\mathbf B(0,1)}^i ( \phi )
		\quad \text{for $\phi \in \mathscr D_{\mathbf B(0,1)} (
		\mathbf R^n, Y)$}.
	\end{equation*}
	As $\dim Y < \infty$ implies, by use of the Ascoli theorem
	(see~\cite[2.10.21]{MR41:1976}), that
	\begin{equation*}
		\mathscr D_{\mathbf B (0,1)} ( \mathbf R^n, Y ) \cap \big \{
		\phi \with \boldsymbol \nu_{\mathbf B(0,1)}^{i+1} ( \phi )
		\leq 1 \big \}
	\end{equation*}
	is $\boldsymbol \nu_{\mathbf B(0,1)}^i$ totally bounded, we readily
	combine the preceding estimate with the principal conclusion to obtain
	the postscript.
\end{proof}

\begin{remark}
	In view of \ref{remark:value_of_some_order}
	and~\ref{remark:bounded_of_some_order}, the case $n=1$, $k=-1$, $Y =
	\mathbf C$, $\Delta = \mathscr D ( \mathbf R, \mathbf C )$, and
	$\mathbf C$~linear $T$, was treated in~\cite[\S\,1.4 Satz]{MR0113134}.
\end{remark}

\begin{remark} \label{remark:dense_diff_crit}
	Combining \ref{remark:nu_pt_diff}, \ref{remark:norm_differentiability}
	and~\ref{lemma:dense_diff_crit}, we obtain the following proposition:
	\emph{Whenever $k$ is a nonnegative integer, $Y$ is a finite
	dimensional Banach space, $T \in \mathscr D' ( \mathbf R^n, Y )$, and
	$a \in \mathbf R^n$, the distribution $T$ is pointwise differentiable
	of order~$k$ at~$a$ if and only if, for some nonnegative integer~$i$,
	the distribution $T$~is $\boldsymbol \nu_{\mathbf B(0,1)}^i$ pointwise
	differentiable of order~$k$ at~$a$.}  The case $k=0$, $Y = \mathbf C$,
	and $\mathbf C$~linear $T$ of this characterisation also follows
	from~\cite[4.2 Théorème 1$'$]{MR0107167}.
\end{remark}

\begin{example} \label{example:denseness}
	The image $\Delta$ of $\mathscr D ( \mathbf R^n, \mathbf R ) \otimes
	Y$ in~$\mathscr D ( \mathbf R^n, Y )$ under the canonical monomorphism
	is sequentially dense in~$\mathscr D ( \mathbf R^n, Y )$
	by~\cite[3.1]{MR3528825}.
\end{example}
 
\begin{example} \label{example:nonmeasurable}
	There exists a separable Banach space $Y$ and $f : \mathbf R \to
	Y^\ast$, continuous with respect to the $Y$~topology on~$Y^\ast$, such
	that $\| f(x) - f (a) \| = 1$ whenever $a,x \in \mathbf R$ and $a \neq
	x$; in fact, noting~\ref{miniremark:lebesgue_duality} and \cite[2.1,
	2.6]{MR3626845}, we may take $Y \simeq L_1 ( \mathscr L^1, \mathbf
	R)$, and $f (x)$ to be associated to the characteristic function of
	the interval $\{ t x \with 0 \leq t \leq 1 \}$.  Evidently, such a
	function $f$ must be $\mathscr L^1$~nonmeasurable with respect to the
	norm topology on~$Y^\ast$ by~\cite[2.2.4]{MR41:1976}.
\end{example}

\begin{remark}
	The type of function considered originates
	from~\cite[p.\,265]{msb_v46_i2_p235}.
\end{remark}

\begin{lemma}
\label{lemma:distribution_function}
	Suppose $k$ is a nonnegative integer, $Y$ is a separable Banach space,
	$f$~is a $Y^\ast$~valued function, that is $\mathscr L^n$~measurable
	with respect to the $Y$~topology on~$Y^\ast$ and satisfies
	\begin{equation*}
		{\textstyle \int_K \| f \| \ud \mathscr L^n < \infty} \quad
		\text{whenever $K$ is a compact subset of~$\mathbf R^n$},
	\end{equation*}
	and $P : \mathbf R^n \to Y^\ast$ is a polynomial function of degree at
	most~$k$.
	
	Then, a distribution $S \in \mathscr D' ( \mathbf R^n, Y )$ may be
	defined by
	\begin{equation*}
		{\textstyle S (\phi) = \int \langle \phi, f \rangle \ud
		\mathscr L^n} \quad \text{for $\phi \in \mathscr D ( \mathbf
		R^n, Y )$}
	\end{equation*}
	and the following five statements hold:
	\begin{enumerate}
		\item \label{item:distribution_function:weak_diff} If $a \in
		\mathbf R^n$ satisfies
		\begin{gather*}
			\limsup_{r \to 0+} r^{-n-k} {\textstyle\int_{\mathbf
			B(a,r)}} \| f -P \| \ud \mathscr L^n < \infty, \\
			\lim_{r \to 0+} r^{-n-k} {\textstyle\int_{\mathbf B
			(a,r)} | \langle y, f(x) -P(x) \rangle | \ud
			\mathscr{L}^n \, x = 0} \quad \text{for $y \in Y$},
		\end{gather*}
		then $S$ is pointwise differentiable of order~$k$ at~$a$ and
		$\pt \Der^m S(a) = \Der^m P (a)$ for $m = 0, \ldots, k$.
		\item \label{item:distribution_function:weak_lebesgue} For
		$\mathscr L^n$ almost all~$a$, the distribution $S$ is
		pointwise differentiable of order~$0$ at~$a$ and $\pt \Der^0 S
		(a) = f(a)$.
		\item \label{item:distribution_function:norm} Whenever $a \in
		\mathbf R^n$ and $0 < r < \infty$, we have
		\begin{equation*}
			{\textstyle\int_{\mathbf B (a,r)}} \| f \| \ud
			\mathscr L^n = \sup \big \{ S ( \phi ) \with
			\text{$\phi \in \mathscr D_{\mathbf B(a,r)} ( \mathbf
			R^n, Y )$ and $\boldsymbol \nu_{\mathbf B(a,r)}^0 (
			\phi ) \leq 1$} \big \}.
		\end{equation*}
		\item \label{item:distribution_function:norm_diff} For $a \in
		\mathbf R^n$, the distribution $S$~is $\boldsymbol
		\nu_{\mathbf B (0,1)}^0$~pointwise differentiable of order~$k$
		at~$a$ with $\pt \Der^m S(a) = \Der^m P(a)$ for $m=0, \ldots,
		k$ if and only if
		\begin{equation*}
			\lim_{r \to 0+} r^{-n-k} {\textstyle\int_{\mathbf
			B(a,r)}} \| f - P \| \ud \mathscr L^n = 0.
		\end{equation*}
		\item \label{item:distribution_function:lebesgue} If $Y^\ast$
		is separable in its norm topology, then $S$~is $\boldsymbol
		\nu_{\mathbf B (0,1)}^0$~pointwise differentiable of order~$0$
		at $\mathscr L^n$ almost all~$a$.
	\end{enumerate}
\end{lemma}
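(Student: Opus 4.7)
The plan is to treat (1) and (3) as the technical cores and derive (2), (4), (5) as applications. Well-definedness of $S$ is immediate: local norm integrability of $f$ together with the bound $|\langle \phi, f\rangle| \leq \|\phi\|_\infty \|f\|$ makes $S$ continuous on each $\mathscr{D}_K(\mathbf R^n, Y)$. For (1), after noting that for $\phi \in \mathscr D(\mathbf R^n, Y)$ with $\spt \phi \subset \mathbf B(0,t)$ the limit from Definition~\ref{def:pt_diff_integral} takes the form
\begin{equation*}
\lim_{r \to 0+} r^{-k-n} {\textstyle\int_{\mathbf B(a,rt)}} \langle \phi(r^{-1}(x-a)), f(x) - P(x)\rangle \ud \mathscr L^n\,x = 0,
\end{equation*}
I would invoke the sequential denseness of $\mathscr D(\mathbf R^n, \mathbf R) \otimes Y$ in $\mathscr D(\mathbf R^n, Y)$ from~\ref{example:denseness} to approximate $\phi$ by tensor combinations $\phi_j = \sum_l \chi_l \otimes y_l$. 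For each $\phi_j$ the integrand splits into finitely many scalar integrals to which the second (little-o) hypothesis applies directly, and the remainder is controlled uniformly by $\boldsymbol{\nu}_K^0(\phi - \phi_j) \cdot r^{-k-n}\int_{\mathbf B(a,rt)} \|f - P\| \ud \mathscr L^n$, whose limsup as $r \to 0$ stays bounded by the first (big-O) hypothesis, so sending $j \to \infty$ eliminates it.

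For (2), I would apply (1) with the constant polynomial $P(x) = f(a)$. The function $\|f\|$ is $\mathscr L^n$-measurable because separability of $Y$ yields $\|f(x)\| = \sup_j \langle y_j, f(x)\rangle$ for a countable dense subset $\{y_j\}$ of the closed unit ball of $Y$, and it is locally integrable by hypothesis; scalar Lebesgue differentiation therefore gives the first hypothesis of (1) at almost every $a$. The second hypothesis, for a countable dense subset $\{y'_j\}$ of $Y$, follows similarly from scalar Lebesgue differentiation applied to $\langle y'_j, f\rangle$; for arbitrary $y \in Y$ the bound $|\langle y - y'_j, f - f(a)\rangle| \leq |y - y'_j|(\|f\| + \|f(a)\|)$ combined with the finite limsup of $r^{-n}\int_{\mathbf B(a,r)} \|f\| \ud \mathscr L^n$ just established handles the density argument.

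For (3), the inequality $S(\phi) \leq \int_{\mathbf B(a,r)} \|f\| \ud \mathscr L^n$ when $\|\phi\|_\infty \leq 1$ is immediate. Conversely, with $\{y_j\}$ as above and $y_1 = 0$, the nondecreasing sequence $F_N := \max_{j \leq N} \langle y_j, f\rangle$ converges pointwise to $\|f\|$, and Lebesgue monotone convergence gives $\int_{\mathbf B(a,r)} F_N \ud \mathscr L^n \to \int_{\mathbf B(a,r)} \|f\| \ud \mathscr L^n$. Selecting $h_N(x) = y_{j^\ast(x)}$ with the smallest index achieving the max realises $\int_{\mathbf B(a,r)} \langle h_N, f\rangle \ud \mathscr L^n = \int_{\mathbf B(a,r)} F_N \ud \mathscr L^n$; mollifying this simple $Y$-valued function after a slight contraction of its support yields $\phi \in \mathscr D_{\mathbf B(a,r)}(\mathbf R^n, Y)$ with $\|\phi\|_\infty \leq 1$, and dominated convergence with the locally integrable majorant $2\|f\|$ brings $S(\phi)$ arbitrarily close to $\int \langle h_N, f\rangle \ud \mathscr L^n$. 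Item (4) then follows directly: the bijection $\psi(x) = \phi(r^{-1}(x-a))$ between $\mathscr D_{\mathbf B(0,1)}$ and $\mathscr D_{\mathbf B(a,r)}$ preserves the sup norm, so applying (3) to $f - P$ identifies the sup appearing in Definition~\ref{def:nu_pt_diff_integral} with $r^{-k-n}\int_{\mathbf B(a,r)} \|f - P\| \ud \mathscr L^n$, and uniqueness of the approximating polynomial via~\ref{lemma:uniqueness} matches the jet to $P$.

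Finally, for (5), norm separability of $Y^\ast$ combined with~\ref{miniremark:setup} renders $f$ norm-measurable, so scalar Lebesgue differentiation applied to the functions $\|f - y_j\|$ for a countable dense $\{y_j\} \subset Y^\ast$ yields $r^{-n}\int_{\mathbf B(a,r)} \|f - f(a)\| \ud \mathscr L^n \to 0$ at almost every $a$, and feeding this into (4) with $k = 0$ and $P(x) = f(a)$ concludes. The main obstacle will be the smoothing step in (3): one must preserve $\|\phi\|_\infty \leq 1$ and $\spt \phi \subset \mathbf B(a,r)$ while keeping $S(\phi)$ close to $\int \langle h_N, f\rangle \ud \mathscr L^n$, which forces a careful joint choice of support contraction and mollifier radius.
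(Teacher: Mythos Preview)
Your proposal is correct and follows essentially the same route as the paper. The only differences are cosmetic: for~(1) the paper packages your density-plus-big-O argument into a single appeal to Lemma~\ref{lemma:dense_diff_crit} (with~\ref{example:denseness}), and for~(3) it quotes an $L_1$--$L_\infty$ duality identity and then approximates $\theta\in\mathbf L_\infty$ by smooth functions, whereas you construct the near-optimal simple function $h_N$ explicitly and mollify it---the same mechanism, just made concrete.
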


\begin{proof}
	The legitimacy of the definition of~$S$ is noted
	in~\cite[4.1.1]{MR41:1976}.
	\eqref{item:distribution_function:weak_diff} is a consequence of
	\ref{lemma:dense_diff_crit} and \ref{example:denseness}.  Moreover,
	employing a countable dense subset of~$Y$ and \cite[2.8.18,
	2.9.9]{MR41:1976}, \eqref{item:distribution_function:weak_lebesgue}
	may be deduced from~\eqref{item:distribution_function:weak_diff}.
	Observing
	(see~\cite[4.6.1]{snulmenn:diploma_thesis}) that
	\begin{equation*}
		{\textstyle\int_{\mathbf B (a,r)}} \| f \| \ud \mathscr L^n =
		\sup \big \{ {\textstyle\int_{\mathbf B(a,r)} \langle \theta,
		f \rangle \ud \mathscr L^n} \with \theta \in \mathbf L_\infty
		( \mathscr L^n, Y ), ( \mathscr L^n )_{(\infty)} ( \theta )
		\leq 1 \big \}
	\end{equation*}
	for $a \in \mathbf R^n$ and $0 < r < \infty$,
	\eqref{item:distribution_function:norm} follows by suitably
	approximating such~$\theta$.  Finally,
	\eqref{item:distribution_function:norm}
	implies~\eqref{item:distribution_function:norm_diff} and,
	by~\cite[2.8.18, 2.9.9]{MR41:1976},
	\eqref{item:distribution_function:norm_diff}
	and~\ref{miniremark:setup} yield
	\eqref{item:distribution_function:lebesgue}.
\end{proof}

\begin{remark} \label{remark:separability}
	Under the hypothesis of~\eqref{item:distribution_function:lebesgue},
	the separability hypothesis on~$Y$ is redundant
	by~\cite[II.3.16]{MR0117523}.
\end{remark}

\begin{remark} \label{remark:need_dual_sep}
	In view of~\eqref{item:distribution_function:norm_diff},
	\ref{example:nonmeasurable} shows that the separability hypothesis
	on~$Y^\ast$ in~\eqref{item:distribution_function:lebesgue} may not be
	omitted even if $f$~is continuous with respect to the $Y$~topology.
	For such~$f$, the distribution~$S$ is $\boldsymbol \nu_{\mathbf
	B(0,1)}^0$ pointwise differentiable of order $(-1,1)$ at each $a \in
	\mathbf R^n$ by~\eqref{item:distribution_function:norm}, since $\| f
	\|$ is locally bounded by~\cite[II.3.21]{MR0117523}.
\end{remark}

\begin{remark}
	If $f$ is ``$w^\ast$-differentiable with
	$w^\ast$-dif\-fer\-en\-tial~$\psi$ at~$a$'' in the sense of
	\cite[3.4]{MR1800768} and $P(x)= f(a)+\langle x-a, \psi \rangle$ for
	$x \in \mathbf R^n$, then $S$ is pointwise differentiable of order~$1$
	at~$a$ and $\pt \Der^1 S(a) = \psi$
	by~\eqref{item:distribution_function:weak_diff}.
\end{remark}

\begin{example}
	Whenever $Y$ is a separable Hilbert space with $\dim Y = \infty$,
	there exists a distribution~$S$ associated to some~$f : \mathbf R \to
	Y$ as in~\ref{lemma:distribution_function} such that $S$ is
	$\boldsymbol \nu_{\mathbf B(0,1)}^0$ pointwise differentiable of
	order~$(-1,1)$ at~$0$ and $S$ is pointwise differentiable of order~$0$
	at~$0$ but, for every nonnegative integer~$i$, $S$ is not $\boldsymbol
	\nu_{\mathbf B(0,1)}^i$~pointwise differentiable of order~$0$ at~$0$;
	in fact, in view
	of~\ref{lemma:distribution_function}\,\eqref{item:distribution_function:weak_diff}\,\eqref{item:distribution_function:norm},
	choosing an orthonormal sequence~$y_j$ in~$Y$, we may take
	\begin{equation*}
		f(x) = y_j \quad \text{if $x \in \mathbf B(0,2^{1-j}) \without
		\mathbf U (0,2^{-j})$ for some positive integer $j$}
	\end{equation*}
	and $f(x) = 0$ if $x = 0$ or $x \in \mathbf R \without \mathbf
	B(0,1)$.
\end{example}

\begin{remark} \label{remark:dim_finite_not_omittable}
	The preceding example shows that, neither from the postscript
	of~\ref{lemma:dense_diff_crit} nor from~\ref{remark:dense_diff_crit},
	the hypothesis $\dim Y < \infty$ may be omitted.
\end{remark}

\begin{example} \label{example:distribution_representable_by_integration}
	Suppose $Y$ is a separable Banach space, $S \in \mathscr D' ( \mathbf
	R^n, Y)$ is representable by integration -- that is,
	see~\cite[4.1.5]{MR41:1976}, $\| S \|$ is a Radon measure and there
	exists a $Y^\ast \cap \{ \psi \with \| \psi \| = 1 \}$~valued
	function~$g$, that is $\| S \|$~measurable with respect to the
	$Y$~topology on~$Y^\ast$ and satisfies $S( \phi ) = \int \langle \phi,
	g \rangle \ud \| S \|$ for $\phi \in \mathscr D ( \mathbf R^n, Y)$ --,
	and $V = \{ (a,\mathbf B(a,r)) \with a \in \mathbf R^n, 0 < r < \infty
	\}$ is the standard $\mathscr L^n$~Vitali relation (see
	\cite[2.8.18]{MR41:1976}).  Then, for $\mathscr L^n$~almost all~$a$,
	the distribution~$S$ is pointwise -- in case $Y^\ast$ is separable,
	also $\boldsymbol \nu_{\mathbf B(0,1)}^0$~pointwise -- differentiable
	of order~$0$ at $a$ with
	\begin{equation*}
		\text{$\pt \Der^0 S(a) = \mathbf D ( \| S \|, \mathscr L^n, V,
		a ) g(a)$ if $a \in \dmn g$}, \quad \text{$\pt
		\Der^0 S (a) = 0$ else};
	\end{equation*}
	in fact, employing~\cite[2.9.2, 2.9.7]{MR41:1976} to express the
	absolutely continuous part~$\| S \|_{\mathscr L^n}$ of~$\| S \|$ with
	respect to~$\mathscr L^n$ via its $V$~derivate $\mathbf D ( \| S \|,
	\mathscr L^n, V, \cdot )$ and reducing to the case that $g$ is a Borel
	function with respect to the $Y$~topology and that $\dmn g = \mathbf
	R^n$ by \ref{miniremark:setup} and~\cite[2.3.6]{MR41:1976}, we
	use~\cite[2.4.10]{MR41:1976} to obtain
	\begin{equation*}
		S(\phi) = {\textstyle\int} \langle \phi(x), g (x) \rangle
		\mathbf D ( \| S \|, \mathscr L^n, V, x ) \ud \mathscr L^n \,
		x + {\textstyle\int} \langle \phi, g \rangle \ud ( \| S \| -
		\| S \|_{\mathscr L^n} )
	\end{equation*}
	whenever $\phi \in \mathscr D ( \mathbf R^n, Y )$, whence we infer the
	conclusion
	by~\ref{lemma:distribution_function}\,\eqref{item:distribution_function:weak_lebesgue}\,\eqref{item:distribution_function:lebesgue}
	as we are assured that $\mathbf D ( \| S \| - \| S \|_{\mathscr L^n},
	\mathscr L^n, V, x ) = 0$ for $\mathscr L^n$~almost all~$x$ by
	\cite[2.9.10]{MR41:1976}.
\end{example}

\section{Poincaré inequality}

The main purpose of this section is to establish the indicated Poincaré
inequality
in~\ref{miniremark:hahn_banach}--\ref{remark:allards_strong_constancy}.
Furthermore, we include its applications to the study of the relation of
pointwise differentiability and distributional derivatives
in~\ref{lemma:polynomial_fcts}--\ref{remark:point_to_point}.

\begin{miniremark} \label{miniremark:hahn_banach}
	Suppose $i$ is a nonnegative integer, $K$ is a compact subset of
	$\mathbf R^n$, $Y$ is a separable Banach space, $T \in \mathscr D' (
	\mathbf R^n, Y )$, and $0 \leq \kappa < \infty$.  Then, we will verify
	(for use in~\ref{remark:allards_strong_constancy} only) the
	equivalence of the following two conditions.
	\begin{enumerate}
		\item \label{item:hahn_banach:condition} For $\phi \in
		\mathscr D_K ( \mathbf R^n, Y )$, we have $| T ( \phi ) | \leq
		\kappa \sup \im \| \Der^i \phi \|$.
		\item \label{item:hahn_banach:extension} There exists $S \in
		\mathscr D' \big ( \mathbf R^n, \bigodot^i ( \mathbf R^n, Y )
		\big )$ satisfying $\spt S \subset K$, $\| S \| ( \mathbf R^n
		) \leq \kappa$, and
		\begin{equation*}
			T ( \phi ) = S ( \Der^i \phi ) \quad \text{for $\phi
			\in \mathscr D_K ( \mathbf R^n, Y)$}.
		\end{equation*}
	\end{enumerate}
	In fact, proceeding as in~\cite[4.1.12]{MR41:1976},
	if~\eqref{item:hahn_banach:condition} holds, $V = \mathscr D \big (
	\mathbf R^n, \bigodot^i ( \mathbf R^n,Y ) \big)$, $\sigma$ is the
	seminorm on $V$ defined by $\sigma ( v ) = \kappa \sup \{ \| v (x) \|
	\with x \in K \}$ for $v \in V$, and $Q : \mathscr D_K ( \mathbf R^n,
	Y) \to V$ is the linear monomorphism induced by $\Der^i$, then we have
	$T \circ Q^{-1} \leq \sigma | \im Q$ and
	\eqref{item:hahn_banach:extension} follows by extending $T \circ
	Q^{-1}$ to a linear map $S : V \to \mathbf R$ with $S \leq \sigma$ by
	means of the Hahn-Banach theorem \cite[2.4.12]{MR41:1976}.
\end{miniremark}

\begin{miniremark} \label{miniremark:lip}
	Suppose $i$ is a nonnegative integer, $a \in \mathbf R^n$, $0 < r <
	\infty$, and $Y$ is a Banach space.  Then, there holds $\sup \im \|
	\Der^m \phi \| \leq r^{i-m} \sup \im \| \Der^i \phi \|$ for $m = 0,
	\ldots, i$ and $\phi \in \mathscr D_{\mathbf B(a,r)} ( \mathbf R^n, Y
	)$ by~\cite[2.2.7, 3.1.1, 3.1.11]{MR41:1976}; in particular, we have
	\begin{equation*}
		\boldsymbol \nu_{\mathbf B(0,1)}^i ( \phi ) = \sup \im \|
		\Der^i \phi \| \quad \text{for $\phi \in \mathscr D_{\mathbf
		B(0,1)} ( \mathbf R^n, Y)$}.
	\end{equation*}
\end{miniremark}

\begin{theorem} \label{thm:poincare_inequality}
	Suppose $i$ is a nonnegative integer, $k$ is a positive integer, $Y$
	is a Banach space, $T \in \mathscr D' ( \mathbf R^n, Y )$, $0 < r <
	\infty$, $\Phi \in \mathscr D_{\mathbf B (0,r)} ( \mathbf R^n, \mathbf
	R )$,
	\begin{equation*}
		\Phi \ast Q = Q \quad
	\end{equation*}
	whenever $Q : \mathbf R^n \to \mathbf R$ is a polynomial function of
	degree at most~$k-1$, $0 \leq \kappa < \infty$, $C$~is a compact
	convex subset of~$\mathbf R^n$, $K = \bigcup \{ \mathbf B (c,kr) \with
	c \in C \}$,
	\begin{equation*}
		| (\Der^o T) ( \phi ) | \leq \kappa \sup \im \| \Der^i
		\phi \| \quad \text{for $\phi \in \mathscr D_K ( \mathbf
		R^n, Y)$ and $o \in \boldsymbol \Xi (n,k)$},
	\end{equation*}
	$a \in C$, and $P : \mathbf R^n \to Y^\ast$ is the polynomial function
	of degree at most~$k-1$ satisfying
	\begin{equation*}
		\langle y, \Der^\xi P ( a ) \rangle = (\Der^\xi T)_b ( \Phi
		(b-a) y)
	\end{equation*}
	whenever $m = 0, \ldots, k-1$, $\xi \in \boldsymbol \Xi (n,m)$, and $y
	\in Y$.

	Then, for $m = 0, \ldots, k-1$ and $\xi \in \boldsymbol \Xi (n,m)$,
	there holds
	\begin{equation*}
		\big | ( \Der^\xi T ) ( \theta ) - {\textstyle\int} \langle
		\theta, \Der^\xi P \rangle \ud \mathscr L^n \big | \leq
		\Gamma_m
		\kappa \sup \im \| \Der^i \theta \| \quad \text{for $\theta
		\in \mathscr D_C ( \mathbf R^n, Y )$},
	\end{equation*}
	where $\Gamma_m = (n \boldsymbol \alpha (n) \sup \im \| \Der^i \Phi
	\|)^{k-m} \prod_{\mu=1}^{k-m} (2\mu r+\diam C)^{1+n+i}$.
\end{theorem}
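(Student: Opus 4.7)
I will prove the theorem by induction on~$k$. Introduce the smooth $Y^\ast$-valued function $F : \mathbf R^n \to Y^\ast$ defined by $\langle y, F(x) \rangle = T_b \bigl( \Phi(b-x) y \bigr)$ for $y \in Y$; a short induction on~$|\xi|$ shows $\langle y, \Der^\xi F(x) \rangle = ( \Der^\xi T )_b \bigl( \Phi(b-x) y \bigr)$ for every multi-index~$\xi$, identifying~$P$ as the Taylor polynomial of~$F$ at~$a$ of order~$k-1$. A Fubini-type computation yields $T(\Phi \ast \theta) = \int \langle \theta, F \rangle \ud \mathscr L^n$ for $\theta \in \mathscr D ( \mathbf R^n, Y )$, giving the master decomposition
\begin{equation*}
	T(\theta) - {\textstyle\int} \langle \theta, P \rangle \ud \mathscr L^n = T(\theta - \Phi \ast \theta) + {\textstyle\int} \langle \theta, F - P \rangle \ud \mathscr L^n
\end{equation*}
for $\theta \in \mathscr D_C(\mathbf R^n, Y)$, on which the argument for $\xi = 0$ rests at every level of the induction.

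\textbf{Base case $k = 1$.} Here $P$ is the constant $F(a)$. Since $\int \Phi \ud \mathscr L^n = 1$, the fundamental theorem of calculus yields the explicit divergence representation $\theta - \Phi \ast \theta = \sum_{j = 1}^n \Der^{e_j} G_j$ with $G_j(x) = \int_0^1 \int \Phi(y) y_j \theta(x - sy) \ud \mathscr L^n \, y \ud \mathscr L^1 \, s$; one checks $\spt G_j \subset C + \mathbf B(0, r) \subset K$ and, using \ref{miniremark:lip} to pass from $\sup |\Phi|$ to $\sup \| \Der^i \Phi \|$, obtains $\sup \| \Der^i G_j \| \leq \boldsymbol \alpha(n) r^{n + 1 + i} \sup \| \Der^i \Phi \| \sup \| \Der^i \theta \|$. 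Integration by parts then gives $T(\theta - \Phi \ast \theta) = -\sum_j (\Der^{e_j} T)(G_j)$, which the hypothesis bounds. For the second summand, the identity $\langle y, \Der^{e_j} F(x) \rangle = ( \Der^{e_j} T )_b ( \Phi(b-x) y )$ combined with the hypothesis yields $\| \Der F(x) \| \leq n \kappa \sup \| \Der^i \Phi \|$ on~$C$; the mean value inequality, the volume bound $\mathscr L^n(C) \leq \boldsymbol \alpha(n) (\diam C)^n$, and \ref{miniremark:lip} applied to~$\theta$ then close the estimate in the stated form $\Gamma_0 \kappa \sup \| \Der^i \theta \|$.

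\textbf{Inductive step ($k > 1$).} For $m \geq 1$ and $\xi \in \boldsymbol \Xi(n, m)$, choose $j$ with $\xi_j \geq 1$ and write $\xi = \xi' + e_j$; the distribution $\Der^{e_j} T$ satisfies the hypotheses of the theorem at level~$k - 1$ (same $\Phi$, $K$, $\kappa$) with associated polynomial $\Der^{e_j} P$ of degree $\leq k - 2$, so the induction hypothesis at multi-index~$\xi'$ of length $m - 1$ yields the required bound for $\Der^\xi T = \Der^{\xi'}( \Der^{e_j} T )$, and the constants $\Gamma_{m - 1}^{(k - 1)}$ and $\Gamma_m^{(k)}$ coincide by inspection. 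For $m = 0$, reuse the master decomposition: the moment property $\Phi \ast Q = Q$ for polynomials~$Q$ of degree $\leq k - 1$ implies $\check\Phi \ast P = P$ and hence $\int \langle \theta - \Phi \ast \theta, P \rangle \ud \mathscr L^n = 0$, allowing the rewriting $T(\theta - \Phi \ast \theta) = -\sum_j \bigl[ ( \Der^{e_j} T )(G_j) - \int \langle G_j, \Der^{e_j} P \rangle \ud \mathscr L^n \bigr]$; each bracket is then bounded by the induction hypothesis applied to $\Der^{e_j} T$ at level~$k - 1$ and multi-index~$0$, with the role of~$C$ played by $C + \mathbf B(0, r)$ (of diameter $\diam C + 2r$) and the role of~$K$ still played by the original~$K = C + \mathbf B(0, r) + \mathbf B(0, (k - 1) r)$. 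The term $\int \langle \theta, F - P \rangle \ud \mathscr L^n$ is handled via Taylor's remainder $\| F(x) - P(x) \| \leq (k!)^{-1} |x - a|^k \sup \{ \| \Der^k F(z) \| : z \in [a, x] \}$, noting that $\| \Der^k F \|$ on~$C$ is controlled by $\kappa \sup \| \Der^i \Phi \|$ times a combinatorial factor through the hypothesis applied to top-order derivatives.

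\textbf{Main obstacle.} The principal technical work is the bookkeeping required to match the precise form of $\Gamma_m = ( n \boldsymbol \alpha(n) \sup \| \Der^i \Phi \| )^{k - m} \prod_{\mu = 1}^{k - m} (2\mu r + \diam C)^{1 + n + i}$: each inductive step contributes one factor of $n \boldsymbol \alpha(n) \sup \| \Der^i \Phi \|$ (from the sum over~$j$, the $G_j$-estimate, and the ball volume), and replaces the base diameter $\diam C + 2(\mu - 1) r$ at depth~$\mu - 1$ by $\diam C + 2 \mu r$ at depth~$\mu$, exactly reproducing the product. Ensuring that the iteratively constructed test functions $G_j$ remain supported in the globally fixed set~$K$ (whose radius $kr$ accommodates exactly $k$ rounds of enlargement by~$r$) and that the combinatorial factors arising in the Taylor remainder and in the norm comparisons of \ref{miniremark:lip} absorb cleanly into the stated form of $\Gamma_m$ constitutes the main computational burden.
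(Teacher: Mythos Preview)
Your base case and the master decomposition are exactly what the paper uses (the paper's $f$ is your $F$, the paper's $\phi_j$ are your $G_j$), so up through $k=1$ the two proofs coincide.

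The inductive step, however, is organised differently. The paper does \emph{not} descend to $\Der^{e_j}T$ at level $k-1$ and then separately bound a Taylor remainder $\int\langle\theta,F-P\rangle\,\mathrm d\mathscr L^n$. Instead, going from level $k$ to $k+1$, it subtracts the top-degree homogeneous part of $P$ from $T$ to form $S(\theta)=T(\theta)-\int\langle\theta(x),\langle x^k/k!,\Der^kP(0)\rangle\rangle\,\mathrm d\mathscr L^n$, uses the \emph{$k=1$ case} (applied to $\Der^\xi T$ for $|\xi|=k$ on the enlarged convex set $D=C+\mathbf B(0,kr)$) to obtain $|(\Der^\xi S)(\theta)|\leq n\boldsymbol\alpha(n)\lambda\,(2(k+1)r+\delta)^{1+n+i}\kappa\sup\|\Der^i\theta\|$ for $\theta\in\mathscr D_D$, checks that the polynomial associated with $S$ at level $k$ is exactly the degree-$(k-1)$ truncation of $P$, and then applies the inductive hypothesis at level $k$ to $S$ with the rescaled $\kappa$. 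This makes $\Gamma_0^{(k+1)}=\Gamma_0^{(k)}\cdot n\boldsymbol\alpha(n)\lambda\,(2(k+1)r+\delta)^{1+n+i}$ fall out as an exact telescoping product, with no leftover term to absorb.

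Your route is also correct, but the claim that the constants ``absorb cleanly'' is too optimistic for the Taylor remainder piece: your bound on $\int\langle\theta,F-P\rangle$ carries only a \emph{single} factor of $\lambda=\sup\|\Der^i\Phi\|$ (from $\|\Der^kF\|\leq n^k\kappa\lambda$), whereas $\Gamma_0$ contains $\lambda^k$. To fit it under $\Gamma_0$ you must additionally invoke the a~priori lower bound $\boldsymbol\alpha(n)\lambda\,r^{n+i}\geq 1$ (which follows from $\int\Phi=1$, $\spt\Phi\subset\mathbf B(0,r)$, and \ref{miniremark:lip}) and then do a short case analysis in $\delta/r$; this works, but it is an extra step you should flag rather than hide under ``combinatorial factors''. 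The paper's top-down recursion avoids this bookkeeping entirely.
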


\begin{proof}
	We assume $a = 0$ and abbreviate $\delta = \diam C$ and $\lambda =
	\sup \im \| \Der^i \Phi \|$.  Replacing $T$ and $k$ by $\Der^\xi T$
	and $k-m$, it suffices to show the assertion
	\begin{equation*}
		\big | T ( \theta ) - {\textstyle\int} \langle \theta, P
		\rangle \ud \mathscr L^n \big | \leq \Gamma_0 \kappa \sup \im
		\| \Der^i \theta \| \quad \text{for $\theta \in \mathscr D_C (
		\mathbf R^n, Y )$}.
	\end{equation*}

	Firstly, the special case $k=1$ thereof will be proven.  Suppose
	$\theta \in \mathscr D_C ( \mathbf R^n, Y )$ with $\sup \im \| \Der^i
	\theta \| \leq 1$.  Using the coordinate functions $X_j : \mathbf R^n
	\to \mathbf R$ given by $X_j (x) = x_j$ whenever $x=(x_1, \ldots, x_n)
	\in \mathbf R^n$, we define $\phi_j : \mathbf R^n \to Y$ by
	\begin{equation*}
		\phi_j (x) = {\textstyle\iint_0^1} \Phi ( b ) X_j (b)
		\theta (x-tb) \ud \mathscr L^1 \, t \ud \mathscr L^n \, b
	\end{equation*}
	whenever $x \in \mathbf R^n$ and $j = 1, \ldots, n$.  We note $\phi_j
	\in \mathscr D_K ( \mathbf R^n, Y )$ with
	\begin{equation*}
		\Der^m \phi_j (x) = {\textstyle\iint_0^1} \Phi ( b ) X_j (b)
		\Der^m \theta (x-tb) \ud \mathscr L^1 \, t \ud \mathscr L^n \,
		b
	\end{equation*}
	for $m = 0,1,2, \ldots$ and $x \in \mathbf R^n$ and infer
	\begin{equation*}
		\sup \im \| \Der^i \phi_j \| \leq r (\mathscr L^n)_{(1)} (
		\Phi ) \leq \boldsymbol \alpha (n) r^{1+n+i} \lambda \quad
		\text{for $j = 1, \ldots, n$}
	\end{equation*}
	by~\ref{miniremark:lip} and also $\theta - \Phi \ast \theta =
	\sum_{j=1}^n \Der_j \phi_j$ since
	\begin{equation*}
		\theta (x) - ( \Phi \ast \theta ) (x) = {\textstyle\int} \Phi
		(b) ( \theta (x) - \theta (x-b) ) \ud \mathscr L^n \, b =
		\sum_{j=1}^n \Der_j \phi_j (x) \quad \text{for $x \in \mathbf
		R^n$}.
	\end{equation*}
	Consequently, we obtain $|T ( \theta- \Phi \ast \theta ) | \leq n
	\boldsymbol \alpha (n) r^{1+n+i} \kappa \lambda$.  In view
	of~\cite[4.1.2]{MR41:1976}, we may define $f \in \mathscr E ( \mathbf
	R^n, Y^\ast )$ by requiring
	\begin{equation*}
		\langle y, f(x) \rangle = T_b ( \Phi ( b-x ) y ) \quad
		\text{for $x \in \mathbf R^n$ and $y \in Y$}.
	\end{equation*}
	Since $\langle y, \Der_j f (x) \rangle = ( \Der_j T )_b ( \Phi (b-x) y
	)$ for $y \in Y$, we infer
	\begin{equation*}
		\| \Der_j f (x) \| \leq \kappa \lambda \quad \text{for $x \in
		C$ and $j = 1, \ldots, n$}.
	\end{equation*}
	Recalling $T ( \Phi \ast \theta ) = \int \langle \theta, f \rangle \ud
	\mathscr L^n$ from~\cite[4.1.2]{MR41:1976} and noting $\sup \im |
	\theta | \leq \delta^i$, where $0^0=1$, by~\ref{miniremark:lip}, the
	preceding estimate implies
	\begin{equation*}
		\big | T ( \Phi \ast \theta ) - {\textstyle\int} \langle
		\theta (x), f(0) \rangle \ud \mathscr L^n \, x \big | \leq n
		\boldsymbol \alpha (n) \delta^{1+n+i} \kappa \lambda
	\end{equation*}
	and the conclusion follows in the present case.

	Proceeding inductively, we now establish that the validity of the
	assertion for some~$k$ implies its validity for $k+1$.  For this
	purpose, we observe that $D = \bigcup \{ \mathbf B (c,kr) \with c \in
	C \}$ is a convex set and define $S \in \mathscr D' ( \mathbf R^n, Y
	)$ by
	\begin{equation*}
		S ( \theta ) = T ( \theta ) - {\textstyle\int} \big \langle
		\theta (x), \langle x^k/k!, \Der^k P(0) \rangle \big
		\rangle \ud \mathscr L^n \, x \quad \text{for $\theta \in
		\mathscr D ( \mathbf R^n, Y )$}.
	\end{equation*}
	For $\theta \in \mathscr D_D ( \mathbf R^n, Y )$ and $\xi \in
	\boldsymbol \Xi (n,k)$, noting
	\begin{equation*}
		( \Der^\xi S ) ( \theta ) = ( \Der^\xi T ) ( \theta ) -
		{\textstyle\int} \langle \theta (x), \Der^\xi P (0) \rangle
		\ud \mathscr L^n \, x,
	\end{equation*}
	we apply the special case with $T$ and $C$ replaced by $\Der^\xi T$
	and $D$ to conclude
	\begin{equation*}
		| ( \Der^\xi S ) ( \theta ) | \leq n \boldsymbol \alpha (n)
		\lambda ( 2(k+1)r + \delta )^{1+n+i} \kappa \sup \im \| \Der^i
		\theta \|.
	\end{equation*}
	Whenever $y \in Y$, $m = 0, \ldots, k-1$, and $\xi \in \boldsymbol \Xi
	(n,m)$, we consider the polynomial function $Q : \mathbf R^n \to
	\mathbf R$ defined by $Q(b) = \langle y, \langle (-b)^k/k!, \Der^k P
	(0) \rangle \rangle$ for $b \in \mathbf R^n$ and compute
	\begin{equation*}
		{\textstyle\int} \Der^\xi \Phi (b) \big \langle y, \langle
		b^k/k!, \Der^k P(0) \rangle \big \rangle \ud \mathscr L^n \, b
		= \big ((\Der^\xi \Phi ) \ast Q \big )(0) = \Der^\xi Q (0) = 0
	\end{equation*}
	to infer
	\begin{equation*}
		( \Der^\xi S )_b ( \Phi (b) y ) = ( \Der^\xi T )_b ( \Phi
		(b) y ).
	\end{equation*}
	Finally,  in view of the preceding estimate, the conclusion for $k+1$
	follows from the conclusion for $k$ with $T$ and $\kappa$ replaced by
	$S$ and $n \boldsymbol \alpha (n) \lambda ( 2(k+1)r+\delta )^{1+n+i}
	\kappa$.
\end{proof}

\begin{remark}
	The estimate of $T ( \theta - \Phi \ast \theta )$ is adapted from the
	use of the smoothing homotopy formulae for currents
	in~\cite[4.1.18]{MR41:1976}.
\end{remark}

\begin{remark} \label{remark:cz_special_smoothing}
	From~\cite[Lemma~2.6]{MR0136849} or \cite[3.5.6]{MR1014685},
	we recall the following proposition.  \emph{If $k$ is a positive
	integer, then there exists $\Phi \in \mathscr D_{\mathbf B (0,1)} (
	\mathbf R^n, \mathbf R )$ satisfying $\Phi_r \ast Q = Q$ whenever $0 <
	r < \infty$ and $Q : \mathbf R^n \to \mathbf R$ is a polynomial
	function of degree at most~$k-1$, where $\Phi_r (x) = r^{-n} \Phi
	(r^{-1} x)$ for $x \in \mathbf R^n$.}
\end{remark}

\begin{remark}
	If, for some positive integer $k$, a function $\Phi \in \mathscr D (
	\mathbf R^n, \mathbf R )$ is such that $\Phi \ast Q = Q$ for every
	polynomial function $Q : \mathbf R^n \to \mathbf R$ of degree at
	most~$k-1$, then the functions $\phi_o \in \mathscr D ( \mathbf R^n,
	\mathbf R)$, defined by $\phi_o (x) = (o!)^{-1} \Der^o \Phi(-x)$ for
	$x \in \mathbf R^n$, $m = 0, \ldots, k-1$, and $o \in \boldsymbol \Xi
	(n,m)$, satisfy the conditions
	\begin{equation*}
		\text{${\textstyle\int} \phi_o (x) x^\xi \ud \mathscr L^n \, x
		= 1$ if $o = \xi$}, \quad \text{${\textstyle\int} \phi_o (x)
		x^\xi \ud \mathscr L^n \, x = 0$ if $o \neq \xi$}
	\end{equation*}
	whenever $m = 0, \ldots, k-1$ and $\xi \in \boldsymbol \Xi (n,m)$,
	where $x^\xi = \prod_{j=1}^n (x_j)^{\xi_j}$ and $0^0 = 1$.  Such a
	family of functions was employed in~\cite[p.~297]{MR0225159-english}
	to construct, for a purpose very similar to ours, a ``projection
	operator'' of $\mathscr D'( \mathbf R^n, \mathbf R)$ onto the subspace
	of distributions corresponding to a polynomial function of degree at
	most~$k-1$.  We also note that, for $o \in \boldsymbol \Xi (n,0)$, the
	condition on $\phi_o$ is equivalent to the invariance property in
	question.
\end{remark}

\begin{remark} \label{remark:allards_strong_constancy}
	It is illustrative to compare the preceding theorem (for $i = 1 = k$
	and $Y = \mathbf R$) to the strong constancy lemma
	of~\cite[\S\,1]{MR840267} which, in view
	of~\ref{miniremark:hahn_banach}, may be restated as follows.
	\emph{Whenever $n$ is a positive integer, $0 < \lambda < 1$, and
	$\epsilon > 0$, there exists $0 \leq \Gamma < \infty$ with the
	following property:  If $a \in \mathbf R^n$, $0 < r < \infty$, $T \in
	\mathscr D_{\mathbf B(a,r)}' ( \mathbf R^n, \mathbf R)$, $T ( \phi)
	\geq 0$ for $0 \leq \phi \in \mathscr D ( \mathbf R^n, \mathbf R)$, $0
	\leq \kappa < \infty$, and
	\begin{equation*}
		| ( \Der_j T ) ( \phi ) | \leq \kappa \sup \im | \Der \phi |
		\quad \text{for $\phi \in \mathscr D_{\mathbf B (a,r)} (
		\mathbf R^n, \mathbf R )$ and $j = 1, \ldots, n$},
	\end{equation*}
	then there exists $0 \leq c < \infty$ such that
	\begin{equation*}
		\big | T(\theta)-c{\textstyle\int} \theta \ud \mathscr L^n
		\big | \leq \big ( \epsilon \| T \| \, \mathbf B (a,r) +
		\Gamma \kappa \big ) \sup \im | \theta |
	\end{equation*}
	for $\theta \in \mathscr D_{\mathbf B (a,\lambda r)} ( \mathbf R^n,
	\mathbf R )$.} In our case, $T$ need not correspond to a monotone
	Daniell integral and the summand $\| T \| \, \mathbf B(a,r)$ does not
	occur; accordingly, a stronger norm is employed for~$\theta$.
\end{remark}

\begin{lemma} \label{lemma:polynomial_fcts}
	Suppose $k$ is a nonnegative integer, $n$ is a positive integer, and
	$\nu$~is a norm on $\mathscr D_{\mathbf B(0,1)} ( \mathbf R^n, \mathbf
	R )$, $v \in \mathbf S^{n-1}$, $K = \mathbf B (v,1) \cap \mathbf B
	(0,1)$, and $P : \mathbf R^n \to \mathbf R$ is a polynomial function
	of degree at most~$k$.

	Then, for some $0 \leq \Gamma < \infty$ determined by $k$, $n$, and
	$\nu$, there holds
	\begin{multline*}
		\sup \{ \| \Der^m P (x) \| \with x \in \mathbf B (0,1), m = 0,
		\ldots, k \} \\
		\leq \Gamma \sup \big \{ {\textstyle \int} \phi P \ud \mathscr
		L^n \with \phi \in \mathscr D_K ( \mathbf R^n, \mathbf R ),
		\nu ( \phi ) \leq 1 \big \}.
	\end{multline*}
\end{lemma}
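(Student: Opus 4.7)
The plan is to exploit that the vector space $Z$ of polynomial functions $P : \mathbf R^n \to \mathbf R$ of degree at most~$k$ is finite-dimensional, so any two norms on~$Z$ are equivalent (as in \cite[Chapter~1, p.~13, Theorem~2]{MR910295}). Write $L(P)$ for the left-hand side of the claimed inequality and, writing $K_v = \mathbf B(v,1) \cap \mathbf B(0,1)$, write $M_v(P)$ for the corresponding expression on the right; both are manifestly seminorms on~$Z$.

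First I would verify that $L$ and $M_v$ are in fact norms on~$Z$. The condition $L(P) = 0$ forces $P$ to vanish on~$\mathbf B(0,1)$, hence identically. If $M_v(P) = 0$, then positive homogeneity yields $\int \phi P \ud \mathscr L^n = 0$ for every $\phi \in \mathscr D_K ( \mathbf R^n, \mathbf R)$, whence $P$ vanishes on the interior of~$K_v$; this interior is nonempty because the point $v/2$ has distance $1/2$ from both $0$ and~$v$, and $P \equiv 0$ follows.

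For each fixed~$v$, the equivalence of norms on~$Z$ then supplies a finite constant $\Gamma_v$ with $L(P) \leq \Gamma_v M_v(P)$ for every $P \in Z$. To obtain a constant $\Gamma$ uniform over $v \in \mathbf S^{n-1}$, as the statement requires, I would argue by contradiction: were no uniform $\Gamma$ to exist, one would find sequences $v_j \in \mathbf S^{n-1}$ and $P_j \in Z$ with $L(P_j) = 1$ and $M_{v_j}(P_j) \to 0$. Compactness of~$\mathbf S^{n-1}$ and of the $L$-unit sphere in~$Z$ permit passing to limits $v_j \to v$ and $P_j \to P$ with $L(P) = 1$. For any $\phi \in \mathscr D ( \mathbf R^n, \mathbf R )$ with $\spt \phi$ in the interior of~$K_v$ and $\nu(\phi) \leq 1$, the inclusion $\spt \phi \subset K_{v_j}$ holds for all sufficiently large~$j$, so $\int \phi P_j \ud \mathscr L^n \leq M_{v_j}(P_j) \to 0$, and the limit yields $\int \phi P \ud \mathscr L^n = 0$. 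Since this precludes $P$ being nonzero anywhere on the interior of~$K_v$, we obtain $P \equiv 0$, contradicting $L(P) = 1$.

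I expect the finite-dimensional equivalence-of-norms step to be routine; the main obstacle is the final uniformity in~$v$ over~$\mathbf S^{n-1}$, whose handling rests on the compactness argument sketched above together with the continuous variation of the interior of~$K_v$ in~$v$.
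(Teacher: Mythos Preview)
Your argument is correct. The paper takes a shorter route: it first observes that a rotation carrying $v$ to a fixed reference vector $v_0$ maps $K_v$ onto $K_{v_0}$ and the left-hand side to itself, thereby reducing the problem to a single~$v$; the equivalence of norms on the finite-dimensional space of polynomials of degree at most~$k$ then finishes in one line. You instead establish the inequality for each~$v$ separately and secure uniformity over~$\mathbf S^{n-1}$ by a compactness argument. The rotation shortcut is cleaner, but it tacitly relies on $\nu$ being invariant under precomposition with rotations---true for the norms $\boldsymbol\nu_{\mathbf B(0,1)}^i$ appearing in every application of the lemma, but not part of the stated hypotheses. Your compactness argument avoids this point and works for arbitrary~$\nu$; the price is a slightly longer proof.
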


\begin{proof}
	Using rotations, it is sufficient to consider a fixed vector~$v$.
	Then, we notice (see~\cite[2.6.5]{MR41:1976}) that both sides
	represent norms on the finite dimensional space of real valued
	polynomial functions on~$\mathbf R^n$ of degree at most~$k$.
\end{proof}

\begin{remark}
	The particular form of the set $K$ will be employed in the proof
	of~\ref{thm:cka}.  For the present section, $K = \mathbf B(0,1)$ would
	be sufficient.
\end{remark}

\begin{remark} \label{remark:polynomial_fcts}
	Whenever $i$ is a nonnegative integer, we may take $\nu = \boldsymbol
	\nu_{\mathbf B(0,1)}^i$ and employ~\ref{miniremark:lip} to infer, for
	$a \in \mathbf R^n$, $0 < s < \infty$, and $m = 0, \ldots, k$, that
	\begin{align*}
		& s^m \| \Der^m P (a) \| \\
		& \qquad \leq \Gamma s^{-n-i} \sup \big \{ {\textstyle \int}
		\phi P \ud \mathscr L^n \with \phi \in \mathscr D_{\mathbf
		B(a,s)} ( \mathbf R^n, \mathbf R ), \sup \im \| \Der^i \phi \|
		\leq 1 \big \}.
	\end{align*}
\end{remark}

\begin{theorem} \label{thm:pt_diff_derivatives}
	Suppose $i$, $k$, and $l$ are integers, $i \geq 0$, $k \geq 1$, $0 <
	\alpha \leq 1$, $l+\alpha \geq 0$, $Y$ is a Banach space, $T \in
	\mathscr D' ( \mathbf R^n, Y )$, and $a \in \mathbf R^n$.

	Then, the following four statements hold.
	\begin{enumerate}
		\item \label{item:pt_diff_derivatives:integral} If $l \geq 0$
		and, for $o \in \boldsymbol \Xi (n,k)$, the distribution
		$\Der^o T$ is $\boldsymbol \nu_{\mathbf B (0,1)}^i$~pointwise
		differentiable of order~$l$ at~$a$, then, for $m = 0, \ldots,
		k-1$ and $\xi \in \boldsymbol \Xi (n,m)$, the distribution
		$\Der^\xi T$ is $\boldsymbol \nu_{\mathbf B
		(0,1)}^i$~pointwise differentiable of order~$k+l-m$ at~$a$.
		\item \label{item:pt_diff_derivatives:hoelder} If, for $o \in
		\boldsymbol \Xi (n,k)$, the distribution $\Der^o T$ is
		$\boldsymbol \nu_{\mathbf B (0,1)}^i$~pointwise differentiable
		of order~$(l,\alpha)$ at~$a$, then, for $m = 0, \ldots, k-1$
		and $\xi \in \boldsymbol \Xi (n,m)$, the distribution
		$\Der^\xi T$ is $\boldsymbol \nu_{\mathbf B
		(0,1)}^i$~pointwise differentiable of order~$(k+l-m,\alpha)$
		at~$a$.
		\item \label{item:pt_diff_derivatives:top_integral} If $l \geq
		0$ and, for $o \in \boldsymbol \Xi (n,k)$, the distribution
		$\Der^o T$ is pointwise differentiable of order~$l$ at~$a$,
		then, for $m = 0, \ldots, k-1$ and $\xi \in \boldsymbol \Xi
		(n,m)$, the distribution $\Der^\xi T$ is pointwise
		differentiable of order~$k+l-m$ at~$a$.
		\item \label{item:pt_diff_derivatives:top_hoelder} If, for $o
		\in \boldsymbol \Xi (n,k)$, the distribution $\Der^o T$ is
		pointwise differentiable of order~$(l,\alpha)$ at~$a$, then,
		for $m = 0, \ldots, k-1$ and $\xi \in \boldsymbol \Xi (n,m)$,
		the distribution $\Der^\xi T$ is pointwise differentiable of
		order~$(k+l-m,\alpha)$ at~$a$.
	\end{enumerate}
	Moreover, under the hypotheses of any of these statements, there
	holds (see~\ref{remark:distributional_derivative})
	\begin{equation*}
		e^{o-\xi} \mathop{\lrcorner} \pt \Der^{k-m+\mu} (\Der^\xi T)
		(a) = \pt \Der^\mu (\Der^o T) (a)
	\end{equation*}
	whenever $m = 0, \ldots, k-1$, $\xi \in \boldsymbol \Xi (n,m)$, $\mu =
	0, \ldots, l$, $o \in \boldsymbol \Xi (n,k)$, and $\xi \leq o$.
\end{theorem}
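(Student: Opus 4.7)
The plan is to derive all four statements from Theorem~\ref{thm:poincare_inequality} (the Poincaré inequality), establishing the $\boldsymbol\nu_{\mathbf B(0,1)}^i$ versions (1) and (2) first and deriving (3) and (4) from them. The key preliminary is that the jet polynomials $Q_o$ of degree $\leq l$ associated with the distributions $\Der^o T$ are mutually compatible in the sense that $\Der^\eta Q_o = \Der^{\eta'}Q_{o'}$ whenever $o+\eta=o'+\eta'$ with $|o|=|o'|=k$ and $|\eta|,|\eta'|\leq l$: indeed, applying Remark~\ref{remark:distributional_derivative} to $\Der^o T$ and $\Der^{o'}T$ exhibits both polynomials as representing the jet of $\Der^{o+\eta}T=\Der^{o'+\eta'}T$ at $a$, so Lemma~\ref{lemma:uniqueness} forces their equality. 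This permits the construction of a polynomial $\bar Q : \mathbf R^n \to Y^\ast$ of degree $\leq k+l$ vanishing to order $k$ at $a$ with $\Der^o \bar Q=Q_o$ for every $o\in\boldsymbol\Xi(n,k)$; once (1)--(4) are proven, the moreover clause is built into the very definition of $\bar Q$.

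For (2), set $\tilde T = T - \bar S$ with $\bar S(\phi) = \int \langle \phi, \bar Q\rangle \ud\mathscr L^n$; reparametrising $\phi(r^{-1}(\cdot-a))$ as $\psi$ turns the hypothesis into the uniform scale-by-scale bound $|\Der^o \tilde T(\psi)|\leq Mr^{l+\alpha+n+i}\sup\im\|\Der^i\psi\|$ for $\psi \in \mathscr D_{\mathbf B(a,r)}(\mathbf R^n, Y)$ and small $r$. At each such scale I apply Theorem~\ref{thm:poincare_inequality} to $\tilde T$ with $C = \mathbf B(a,r/2)$, radius $r/(2k)$, and the rescaled smoothing kernel $\Phi_r(x) = (r/(2k))^{-n}\Phi_0(2kx/r)$ built from a fixed $\Phi_0 \in \mathscr D_{\mathbf B(0,1)}(\mathbf R^n, \mathbf R)$ supplied by Remark~\ref{remark:cz_special_smoothing}. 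The explicit formula for $\Gamma_m$ shows that the Poincaré constant gains a factor of order $r^{k-m}$ against $\kappa$, yielding a polynomial $P_r$ of degree $\leq k-1$ with
\begin{equation*}
	\bigl| \Der^\xi \tilde T(\theta) - {\textstyle\int}\langle \theta, \Der^\xi P_r\rangle \ud\mathscr L^n \bigr| \leq \Gamma r^{k+l-m+\alpha+n+i} \sup\im\|\Der^i\theta\|
\end{equation*}
for $|\xi|=m \leq k-1$ and $\theta \in \mathscr D_{\mathbf B(a,r/2)}(\mathbf R^n, Y)$.

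The main obstacle is replacing the scale-dependent polynomial $P_r$ by a single limit. Comparing the above estimate at scales $r$ and $r/2$ on the common domain $\mathscr D_{\mathbf B(a,r/4)}(\mathbf R^n, Y)$ and invoking Remark~\ref{remark:polynomial_fcts} (transferring integral bounds into pointwise bounds on coefficients via equivalence of norms on the finite dimensional space of polynomials of degree $\leq k-1$) yields $\|\Der^j(P_r - P_{r/2})(a)\| \leq C r^{k+l-j+\alpha}$ for $0 \leq j \leq k-1$; since $k+l-j+\alpha > 0$ throughout this range, the dyadic sequence $P_{r_n}$ with $r_n = r_0/2^n$ is Cauchy and converges to some $P_\infty$. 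Setting $R = \bar Q + P_\infty$, combining the Poincaré estimate with the resulting polynomial decay proves (2) and delivers the jet polynomial $\Der^\xi R$ for $\Der^\xi T$. Statement (1) follows by rerunning the same scheme with $M$ replaced by the scale-dependent quantity $M(r) = \sup\{r^{-l-n}|\Der^o \tilde T(\phi_{r,a})| : \phi \in \mathscr D_{\mathbf B(0,1)}(\mathbf R^n,Y), \boldsymbol\nu_{\mathbf B(0,1)}^i(\phi) \leq 1\}$, which tends to zero by the little-$o$ hypothesis, so the Cauchy estimate sharpens to little-$o$. Statement (4) is immediate from (2) via Remark~\ref{remark:norm_differentiability}. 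For (3), I first apply the principle of uniform boundedness (Remark~\ref{remark:uniform_boundedness}) to upgrade the hypothesis to $\boldsymbol\nu_{\mathbf B(0,1)}^i$-pointwise differentiability of $\Der^o T$ of order $(l-1,1)$, invoke (2) to obtain order $(k+l-m-1,1)$ for $\Der^\xi T$, and close the argument via Lemma~\ref{lemma:dense_diff_crit} on the sequentially dense set $\mathscr D(\mathbf R^n, \mathbf R)\otimes Y$ of Example~\ref{example:denseness}; the required pointwise limit-zero condition on this dense set is verified by scalarising in $y \in Y$ and appealing to the finite dimensional target case of (3), available through (1) and Remark~\ref{remark:dense_diff_crit}.
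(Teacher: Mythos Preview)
Your proposal is correct and follows essentially the same route as the paper's own proof: reduce by subtracting a polynomial $\bar Q$ built from the compatible jets of the $\Der^o T$, apply the Poincaré inequality~\ref{thm:poincare_inequality} at each scale with the rescaled kernel from~\ref{remark:cz_special_smoothing}, show the resulting scale-dependent polynomials $P_r$ form a Cauchy family via~\ref{remark:polynomial_fcts}, and then derive (4) from (2) via~\ref{remark:norm_differentiability} and (3) by scalarising to reduce to the finite-dimensional case (handled by (1) and~\ref{remark:dense_diff_crit}) together with~\ref{lemma:dense_diff_crit} on the dense set of~\ref{example:denseness}. The only cosmetic differences are that the paper treats (1) and (2) simultaneously by introducing a single parameter $\gamma \in \{l, l+\alpha\}$ and defines its $\kappa(r)$ as a supremum over all scales $s \leq r$ (which you should also do so that your $M(r)$ is monotone and the geometric-series tail is genuinely $o(r^{k+l-j})$ in case (1)).
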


\begin{proof}
	We will reduce the problem by adding the condition $\pt \Der^\mu (
	\Der^o T) ( a) = 0$ for $\mu = 0, \ldots, l$ and $o \in \boldsymbol
	\Xi (n,k)$ to the hypotheses of the four statements.  Indeed, noting
	\ref{remark:pt_diff}, the hypotheses of any of the four statements
	allow to apply \ref{remark:distributional_derivative} with $T$, $k$,
	$m$, $\xi$, and $\mu$ replaced by $\Der^o T$, $\mu$, $\mu$, $\pi$, and
	$0$ to conclude that $\Der^{o+\pi} T = \Der^\pi \Der^o T$ is pointwise
	differentiable of order~$0$ at~$a$ with
	\begin{equation*}
		\pt \Der^0 ( \Der^{o+\pi} T ) (a) = \langle e^\pi, \pt
		\Der^\mu ( \Der^o T ) (a) \rangle
	\end{equation*}
	for $\mu = 0, \ldots, l$, $\pi \in \boldsymbol \Xi (n,\mu)$, and $o
	\in \boldsymbol \Xi (n,k)$.  Defining $\psi_\mu \in \bigodot^{k+\mu} (
	\mathbf R^n, Y^\ast )$ by $\langle e^\rho, \psi_\mu \rangle = \pt
	\Der^0 ( \Der^\rho T ) (a)$ for $\mu = 0, \ldots, l$ and $\rho \in
	\boldsymbol \Xi (n,k+\mu)$ and $Q : \mathbf R^n \to Y^\ast$ by $Q(x) =
	\sum_{\mu=0}^l \langle (x-a)^{k+\mu}/(k+\mu)!, \psi_\mu \rangle$ for
	$x \in \mathbf R^n$, we then infer
	\begin{equation*}
		\Der^\mu \Der^o Q (a) = e^o \mathop{\lrcorner} \psi_\mu = \pt
		\Der^\mu ( \Der^o T ) (a) \quad \text{for $\mu = 0, \ldots, l$
		and $o \in \boldsymbol \Xi (n,k)$},
	\end{equation*}
	whence the indicated reduction follows by replacing $T(\theta)$ by
	$T(\theta)-\int \langle \theta, Q \rangle \ud \mathscr L^n$ for
	$\theta \in \mathscr D ( \mathbf R^n, Y )$.

	Next, we establish \eqref{item:pt_diff_derivatives:integral}
	and~\eqref{item:pt_diff_derivatives:hoelder} and the validity of the
	formula in the postscript under the hypotheses of any of these two
	statements.  For this purpose, we define $\gamma = l$ in case
	of~\eqref{item:pt_diff_derivatives:integral} and $\gamma = l+\alpha$
	in case of~\eqref{item:pt_diff_derivatives:hoelder}, hence $\gamma
	\geq 0$.  We choose $\Phi$ and $\Phi_r$ as
	in~\ref{remark:cz_special_smoothing}.  For $0 < r < \infty$, we also
	define polynomial functions $P_r : \mathbf R^n \to Y^\ast$ of degree
	at most~$k-1$ characterised by
	\begin{equation*}
		\langle y, \Der^\xi P_r(a) \rangle = ( \Der^\xi T)_b ( \Phi_r
		(b-a) y )
	\end{equation*}
	whenever $m = 0, \ldots, k-1$, $\xi \in \boldsymbol \Xi (n,m)$, and $y
	\in Y$, abbreviate $C(r) = \mathbf B(a,r)$ and $K(r) = \mathbf B
	(a,(k+1)r)$, and let $\kappa (r)$ denote the supremum of the set of
	all numbers
	\begin{equation*}
		s^{-n-\gamma-i} ( \Der^o T ) ( \phi )
	\end{equation*}
	corresponding to $0 < s \leq r$, $o \in \boldsymbol \Xi (n,k)$, and
	$\phi \in \mathscr D_{K(s)} ( \mathbf R^n, Y )$ satisfying $\sup \im
	\| \Der^i \phi \| \leq 1$.  By~\ref{miniremark:lip}, the hypotheses
	of~\eqref{item:pt_diff_derivatives:integral}
	and~\eqref{item:pt_diff_derivatives:hoelder} yield
	\begin{equation*}
		\text{$\lim_{r \to 0+} \kappa (r)=0$ in case
		of~\eqref{item:pt_diff_derivatives:integral}}, \quad
		\text{$\limsup_{r \to 0+} \kappa (r) < \infty$ in case
		of~\eqref{item:pt_diff_derivatives:hoelder}};
	\end{equation*}
	in particular, there exists $0 < \delta < \infty$ with $\kappa (
	\delta ) < \infty$.  For $0 < r \leq \delta$, applying
	\ref{thm:poincare_inequality} with $\Phi$, $\kappa$, and $C$ replaced
	by $\Phi_r$, $r^{n+\gamma+i} \kappa(r)$, and $C(r)$, we obtain
	\begin{align*}
		& \big | ( \Der^\xi T ) ( \theta ) - {\textstyle\int} \langle
		\theta, \Der^\xi P_r \rangle \ud \mathscr L^n \big | \leq
		\Delta_1 r^{k-m+n+\gamma+i} \kappa (r), \\
		& \qquad \text{where $\Delta_1 = \big ( ( n \boldsymbol \alpha
		(n) \sup \im \| \Der^i \Phi \| )^k+1 \big ) \big ( (k+1)!2^k
		\big )^{1+n+i}$},
	\end{align*}
	for $m = 0, \ldots, k-1$, $\xi \in \boldsymbol \Xi (n,m)$, and
	$\theta \in \mathscr D_{C(r)} ( \mathbf R^n, Y )$ with $\sup \im \|
	\Der^i \theta \| \leq 1$.  Moreover, whenever
	$0 < r/2 \leq s \leq r \leq \delta$, noting
	\begin{equation*}
		\big | {\textstyle \int} \langle \theta, P_r-P_s \rangle \ud
		\mathscr L^n \big | \leq 2 \Delta_1 r^{k+n+\gamma+i} \kappa
		(r) \sup \im \| \Der^i \theta \| \quad \text{for $\theta \in
		\mathscr D_{C(s)} ( \mathbf R^n, Y )$},
	\end{equation*}
	\ref{remark:polynomial_fcts} implies
	\begin{equation*}
		\| \Der^m ( P_r-P_s) (a) \| \leq \Delta_2 r^{k-m+\gamma}
		\kappa (r) \quad \text{for $m=0, \ldots, k-1$},
	\end{equation*}
	where $0 \leq \Delta_2 < \infty$ is determined by $i$, $k$, $n$, and
	$\Phi$.  Therefore, as $k-m+\gamma \geq 1$, we may define $P : \mathbf
	R^n \to Y^\ast$ by $P(x) = \sum_{m=0}^{k-1} \langle (x-a)^m/m!,
	\lim_{r \to 0+} \Der^m P_r (a) \rangle$ for $x \in \mathbf R^n$ and
	estimate
	\begin{equation*}
		\| \Der^m (P_r-P) (a) \| \leq 2 \Delta_2 r^{k-m+\gamma} \kappa
		(r) \quad \text{for $m = 0, \ldots, k-1$ and $0 < r \leq
		\delta$},
	\end{equation*}
	both using the geometric series.  Employing \ref{miniremark:lip}
	for~$\theta$ and Taylor's formula (see~\cite[1.10.4,
	3.1.11]{MR41:1976}) to bound $\| \Der^\xi (P_r-P) (x)\|$ for $x \in
	C(r)$, we conclude
	\begin{gather*}
		\big | {\textstyle\int} \langle \theta, \Der^\xi (P_r-P)
		\rangle \ud \mathscr L^n \big | \leq 6 \boldsymbol \alpha (n)
		\Delta_2 r^{k-m+n+\gamma+i} \kappa (r) , \\
		\big | ( \Der^\xi T ) (\theta) - {\textstyle\int} \langle
		\theta, \Der^\xi P \rangle \ud \mathscr L^n \big | \leq (
		\Delta_1 + 6 \boldsymbol \alpha (n) \Delta_2 )
		r^{k-m+n+\gamma+i} \kappa (r)
	\end{gather*}
	for $m = 0, \ldots, k-1$, $\xi \in \boldsymbol \Xi (n,m)$, and
	$\theta \in \mathscr D_{C(r)} ( \mathbf R^n, Y )$ with $\sup \im \|
	\Der^i \theta \| \leq 1$, whence we infer
	\eqref{item:pt_diff_derivatives:integral}
	and~\eqref{item:pt_diff_derivatives:hoelder} and the corresponding
	part of the postscript.

	Combining \eqref{item:pt_diff_derivatives:hoelder}
	and~\ref{remark:norm_differentiability}, we
	obtain~\eqref{item:pt_diff_derivatives:top_hoelder} and its part of
	the postscript.

	To establish~\eqref{item:pt_diff_derivatives:top_integral}, we firstly
	notice that the case $\dim Y < \infty$
	of~\eqref{item:pt_diff_derivatives:top_integral}, including its
	postscript, follows from \eqref{item:pt_diff_derivatives:integral} and
	\ref{remark:dense_diff_crit}.  To treat the general case
	of~\eqref{item:pt_diff_derivatives:top_integral}, we define $T^y \in
	\mathscr D' ( \mathbf R^n, \mathbf R )$ by $T^y ( \zeta ) = T_x (
	\zeta(x)y)$ for $y \in Y$ and $\zeta \in \mathscr D ( \mathbf R^n,
	\mathbf R )$.  From the case $\dim Y < \infty$
	of~\eqref{item:pt_diff_derivatives:top_integral} and its postscript,
	we conclude that $T^y$ is pointwise differentiable of order~$k+l$
	at~$a$ and $\pt \Der^{k+\mu} T^y (a) = 0$ for $\mu = 0, \ldots, l$.
	By~\eqref{item:pt_diff_derivatives:top_hoelder}, $T$ is pointwise
	differentiable of order $(k+l-1,1)$ at~$a$.  Defining $P :
	\mathbf R^n \to Y^\ast$ by
	\begin{equation*}
		P(x) = \sum_{m=0}^{k-1} \langle (x-a)^m/m!, \pt \Der^m T (a)
		\rangle \quad \text{for $x \in \mathbf R^n$},
	\end{equation*}
	we readily verify
	\begin{equation*}
		\langle \eta, \pt \Der^\iota T^y (a) \rangle = \big \langle y,
		\langle \eta, \Der^\iota P(a) \rangle \big \rangle
	\end{equation*}
	whenever $\iota = 0, \ldots, k+l$, $\eta \in \bigodot_\iota \mathbf
	R^n$, and $y \in Y$.  In combination with~\ref{example:denseness}, we
	then apply~\ref{lemma:dense_diff_crit} with $k$ replaced by $k+l-1$ to
	infer that $T$ is pointwise differentiable of order~$k+l$ at~$a$ and
	that $P$ corresponds to the $k+l$~jet of~$T$ at~$a$.  Finally,
	\ref{remark:distributional_derivative}~with $k$ and $\mu$ replaced by
	$k+l$ and $k-m+\mu$ yields the remaining cases
	of~\eqref{item:pt_diff_derivatives:top_integral} and the postscript.
\end{proof}

\begin{remark} \label{remark:Estrada}
	Instead of treating the preceding theorem as corollary to the
	principal theorem of this section, the Poincaré inequality in
	\ref{thm:poincare_inequality}, one may also approach it directly
	by means of homothetic deformations: Inductively reducing the
	principal statements to the case $k=1$ and $m=0$, assuming $\pt
	\Der^\mu ( \Der_j T ) (0) = 0$ for $\mu = 0, \ldots, l$ and $j = 1,
	\ldots, n$, and defining $T^r$ and $S_j^r$ in $\mathscr D' ( \mathbf
	R^n, Y )$ by
	\begin{equation*}
		T^r ( \phi ) = T_x \big ( r^{-n} \phi ( r^{-1} (x-a)) \big ),
		\quad S_j^r ( \phi ) = ( \Der_j T )_x \big ( r^{-n} \phi (
		r^{-1} (x-a) ) \big )
	\end{equation*}
	one verifies, by means of differentiation with respect to $r$,
	that, for $0 < s \leq r < \infty$,
	\begin{equation*}
		(T^r-T^s) ( \phi ) = \sum_{j=1}^n {\textstyle\int_s^r} S_j^r (
		\phi X_j ) \ud \mathscr L^1 \, r \quad \text{whenever
		$\phi \in \mathscr D ( \mathbf R^n, Y )$}
	\end{equation*}
	where $X_j : \mathbf R^n \to \mathbf R$ satisfy $X_j (x) = x_j$
	for $x=(x_1, \ldots, x_n ) \in \mathbf R^n$, whence one may deduce,
	employing the principle of uniform boundedness as in
	\ref{remark:uniform_boundedness}, that the distributions $T^s$
	have a limit in $\mathscr D' ( \mathbf R^n, Y )$ as $s \to 0+$, whose
	first order partial derivatives vanish, and thus is induced, via the
	$\mathscr L^n$ integral, by some $\psi \in Y^\ast$ by
	\cite[4.1.4]{MR41:1976}; the postscript then follows using
	\ref{remark:distributional_derivative}.
\end{remark}

\begin{remark} \label{remark:cz_diff_diff}
	The preceding theorem is partly analogous to
	\cite[Theorem~11]{MR0136849}, where differentiability in Lebesgue
	spaces with respect to $\mathscr L^n$ is treated for $k=1$ in such a
	manner as to include embedding results.
\end{remark}

\begin{remark}
	By
	\ref{lemma:distribution_function}\,\eqref{item:distribution_function:norm_diff}
	and~\ref{example:distribution_representable_by_integration}, taking
	$i=0=l$ in~\eqref{item:pt_diff_derivatives:integral} yields the
	differentiability result for real valued functions on~$\mathbf R^n$
	whose $k$-th order distributional partial derivatives are
	representable by integration contained in \cite[Theorem
	1]{MR0225159-english}.
\end{remark}

\begin{corollary} \label{corollary:characterisation_pt_diff}
	$T$ is pointwise differentiable of order~$k$ at~$a$
	if and only if, for $o \in \boldsymbol \Xi (n,k)$, the distribution
	$\Der^o T$ is pointwise differentiable of order~$0$ at~$a$.
\end{corollary}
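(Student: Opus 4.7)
The plan is to observe that this corollary is essentially the $l = 0$ case of Theorem \ref{thm:pt_diff_derivatives} combined with Remark \ref{remark:distributional_derivative}, with essentially no further work required.

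For the forward direction, I would argue as follows. Suppose $T$ is pointwise differentiable of order~$k$ at~$a$. Then Remark \ref{remark:distributional_derivative} applied with $m = k$ and $\xi = o \in \boldsymbol \Xi(n,k)$ (so that $|o| = k$) yields that $\Der^o T$ is pointwise differentiable of order~$k - k = 0$ at~$a$; indeed the $0$~jet of $\Der^o T$ at~$a$ equals $\Der^o S$, where $S$ denotes the $k$~jet of $T$ at~$a$.

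For the reverse direction, assume that, for every $o \in \boldsymbol \Xi(n,k)$, the distribution $\Der^o T$ is pointwise differentiable of order~$0$ at~$a$. I would invoke Theorem \ref{thm:pt_diff_derivatives}\,\eqref{item:pt_diff_derivatives:top_integral} with $l = 0$. Choosing $m = 0$ and $\xi = (0, \ldots, 0) \in \boldsymbol \Xi(n,0)$, so that $\Der^\xi T = T$, the conclusion of that item directly asserts that $T$ is pointwise differentiable of order~$k + 0 - 0 = k$ at~$a$, which is what was to be shown.

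Since both directions reduce immediately to previously established results, there is no genuine obstacle in this proof; the work has already been carried out in Theorem~\ref{thm:pt_diff_derivatives}. The only point requiring mild care is the verification that the choices $m = 0$, $\xi = (0, \ldots, 0)$, and $l = 0$ are permitted in the cited statement, which is immediate from inspection.
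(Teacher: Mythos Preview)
Your proposal is correct and matches the paper's approach exactly: the paper's proof is the one-line instruction to combine Remark~\ref{remark:distributional_derivative} with Theorem~\ref{thm:pt_diff_derivatives}\,\eqref{item:pt_diff_derivatives:top_integral}, and you have spelled out precisely those two invocations with the appropriate parameter choices.
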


\begin{proof}
	Combine \ref{remark:distributional_derivative}
	and~\ref{thm:pt_diff_derivatives}\,\eqref{item:pt_diff_derivatives:top_integral}.
\end{proof}

\begin{remark} \label{remark:point_to_point}
	For the case $k=1=n$, $Y = \mathbf C$, and $\mathbf C$~linear $T$, a
	related characterisation was noted in~\cite[Proposition 2]{MR3261225}.
	The weaker statement that pointwise differentiability of order~$0$ at
	$a$ of the distributions $\Der^o T$ for $o \in \boldsymbol \Xi (n,k)$
	implies pointwise differentiability of $T$ of order~$0$ at~$a$ had
	been proven for the same case in~\cite[3.2 Théorème]{MR0087905} and
	follows for general~$n$ from \cite[Lemma~3]{MR0209835}.
\end{remark}

\section{Differentiability theory}

In this section, we firstly carry out the necessary adaptations of various
known results to cover the case of infinite dimensional target spaces
in~\ref{lemma:weakly_diff_cka}--\ref{remark:fed_3.1.15_revisited}.  Then, we
establish the main theorems of the differentiability theory
in~\ref{thm:cka}--\ref{thm:k_lusin_approximation}.

\begin{lemma} \label{lemma:weakly_diff_cka}
	Suppose $k$ is a positive integer, $Y$ is a separable Banach space, $f
	: \mathbf R^n \to Y^\ast$ is of class~$k-1$, $M = \Lip \Der^{k-1}
	f < \infty$, and $Z_k$ is defined and endowed with the $Y \otimes
	\bigodot_k \mathbf R^n$~topology as in~\ref{miniremark:setup}.

	Then, there exists an $\mathscr L^n$~almost unique, $Z_k$~valued
	$\mathscr L^n$~measurable function~$g$ satisfying $( \mathscr
	L^n)_{(\infty)} ( \| g \| ) \leq M$ and
	\begin{equation*}
		{\textstyle\int} \langle \eta, \Der^k \zeta (x)
		\rangle \langle y, f(x) \rangle \ud \mathscr L^n \, x = (-1)^k
		{\textstyle\int} \zeta (x) \big \langle y, \langle \eta,
		g(x) \rangle \big \rangle \ud \mathscr L^n \, x
	\end{equation*}
	whenever $\zeta \in \mathscr D ( \mathbf R^n, \mathbf R )$, $\eta \in
	\bigodot_k \mathbf R^n$, and $y \in Y$.
\end{lemma}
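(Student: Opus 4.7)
The plan is to produce $g$ as the limit of the derivatives of mollified approximations of $f$ and then invoke \ref{miniremark:lebesgue_duality}.  Abbreviate $\tilde Y = Y \otimes \bigodot_k \mathbf R^n$, which is a separable Banach space by \ref{remark:tensor_completeness}; the canonical isometry of \ref{miniremark:setup} identifies $Z_k \simeq \tilde Y^\ast$ and matches the $Y \otimes \bigodot_k \mathbf R^n$-topology on $Z_k$ with the $\tilde Y$-topology on $\tilde Y^\ast$.  Let $\rho_\epsilon \in \mathscr D ( \mathbf R^n, \mathbf R )$ be a standard mollifier and set $f_\epsilon = \rho_\epsilon \ast f$, a smooth $Y^\ast$-valued function.

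For every $y \in Y$, the scalar function $\langle y, f \rangle$ is of class $k-1$ with $\Lip \Der^{k-1} \langle y, f \rangle \leq M |y|$; since $\langle y, f_\epsilon \rangle = \rho_\epsilon \ast \langle y, f \rangle$ is smooth with the same Lipschitz bound on its $(k-1)$-st derivative, standard mollification estimates yield $\| \Der^k \langle y, f_\epsilon \rangle ( x ) \| \leq M |y|$ for every $x$.  Taking the supremum over $|y| \leq 1$ gives the pointwise bound $\| \Der^k f_\epsilon ( x ) \|_{Z_k} \leq M$, whence direct integration produces a bounded linear functional
\[
    H_\epsilon ( \theta ) = \int \langle \theta, \Der^k f_\epsilon \rangle \ud \mathscr L^n \quad \text{for } \theta \in \mathbf L_1 ( \mathscr L^n, \tilde Y )
\]
of norm at most $M$.

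For $\theta = \zeta \cdot ( y \otimes \eta )$ with $\zeta \in \mathscr D ( \mathbf R^n, \mathbf R )$, $y \in Y$, $\eta \in \bigodot_k \mathbf R^n$, classical integration by parts on the smooth $f_\epsilon$ gives
\[
    H_\epsilon ( \theta ) = (-1)^k \int \langle \eta, \Der^k \zeta \rangle \langle y, f_\epsilon \rangle \ud \mathscr L^n,
\]
which converges as $\epsilon \to 0+$ to $(-1)^k \int \langle \eta, \Der^k \zeta \rangle \langle y, f \rangle \ud \mathscr L^n$ by uniform convergence of $f_\epsilon$ to $f$ on the compact support of $\zeta$.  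Since linear combinations of such $\theta$ form a dense subspace of $\mathbf L_1 ( \mathscr L^n, \tilde Y )$ (by \ref{example:denseness} combined with density of $\mathscr D$ in $\mathbf L_1$) and $\| H_\epsilon \| \leq M$ uniformly, the limit $H ( \theta ) := \lim_{\epsilon \to 0+} H_\epsilon ( \theta )$ exists for every $\theta \in \mathbf L_1 ( \mathscr L^n, \tilde Y )$ and defines a bounded linear functional of norm at most $M$.  By \ref{miniremark:lebesgue_duality} applied with $Y$ replaced by $\tilde Y$, $H$ corresponds to an $\mathscr L^n$-almost unique $Z_k$-valued function $g$, $\mathscr L^n$-measurable with respect to the $Y \otimes \bigodot_k \mathbf R^n$-topology, with $( \mathscr L^n )_{(\infty)} ( \| g \| ) \leq M$ and $H ( \theta ) = \int \langle \theta, g \rangle \ud \mathscr L^n$.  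Specialising back to $\theta = \zeta \cdot ( y \otimes \eta )$ and using $\langle y \otimes \eta, g(x) \rangle = \langle y, \langle \eta, g(x) \rangle \rangle$ from \ref{miniremark:setup} yields the stated identity.

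The main delicate point is translating the vector-valued Lipschitz hypothesis $\Lip \Der^{k-1} f \leq M$ into the pointwise $Z_k$-bound on $\Der^k f_\epsilon$, which is handled by reduction to the scalar situation through the $Y$-pairing; the remainder -- existence of the limit functional $H$ on a dense subspace with uniform bound, and the final invocation of \ref{miniremark:lebesgue_duality} -- is routine.
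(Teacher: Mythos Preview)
Your proof is correct and follows essentially the same route as the paper's: mollify $f$, obtain the uniform bound $\| \Der^k f_\epsilon \| \leq M$, and produce $g$ from the resulting bounded family in $(L_1)^\ast$ via \ref{miniremark:lebesgue_duality}. The only difference is in how the limit is taken: the paper invokes weak* sequential compactness of the closed ball in $(L_1(\mathscr L^n,\tilde Y))^\ast$ (hence passes to a subsequence), whereas you verify convergence of $H_\epsilon$ on the dense subspace spanned by $\zeta \cdot (y \otimes \eta)$ and extend by uniform boundedness---a slightly cleaner variant that dispenses with the subsequence.
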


\begin{proof}
	If $f$ is of class~$k$, then we may take $g = \Der^k f$ and, by
	\cite[2.2.7, 3.1.1, 3.1.11]{MR41:1976}, notice that $M = \sup \im \|
	\Der^k f \|$.  In the general case, the image of~$f$ is contained in a
	separable subspace of~$Y^\ast$, hence convolution as described
	in~\cite[4.1.2]{MR41:1976} yields a sequence of functions $f_j :
	\mathbf R^n \to Y^\ast$ of class~$k$ with $\sup \im \| \Der^k f_j \|
	\leq M$ converging locally uniformly to~$f$ as $j \to \infty$.  Then,
	by~\cite[2.1]{MR3626845} and \ref{miniremark:lebesgue_duality} both
	with $\mu$ and $Y$ replaced by $\mathscr L^n$ and $Y \otimes
	\bigodot_k \mathbf R^n$, and \ref{miniremark:setup} with $Y$ and $m$
	replaced by $L_1 ( \mathscr L^n, Y \otimes \bigodot_k \mathbf R^n )$
	and~$0$, we infer, possibly passing to a subsequence, the existence of
	a $Z_k$~valued $\mathscr L^n$~measurable function~$g$ satisfying $(
	\mathscr L^n)_{(\infty)} ( \| g \| ) \leq M$ and
	\begin{equation*}
		\lim_{j \to \infty} {\textstyle\int} \langle \theta, \Der^k
		f_j \rangle \ud \mathscr L^n = {\textstyle\int} \langle
		\theta, g \rangle \ud \mathscr L^n \quad {\textstyle\text{for
		$\theta \in \mathbf L_1 \big ( \mathscr L^n, Y \otimes
		\bigodot_k \mathbf R^n \big )$}}.
	\end{equation*}
	In view of the special case, the conclusion now readily follows.
\end{proof}

\begin{lemma} \label{lemma:rademacher_cka}
	Suppose $k$ is a positive integer, $Y$ is a separable Banach space, $f
	: \mathbf R^n \to Y^\ast$ is of class~$(k-1,1)$, and $S \in \mathscr
	D' (\mathbf R^n, Y )$ satisfies
	\begin{equation*}
		S ( \phi ) = {\textstyle \int \langle \phi, f \rangle \ud
		\mathscr L^n} \quad \text{for $\phi \in \mathscr D ( \mathbf
		R^n, Y )$}.
	\end{equation*}

	Then, the following three statements hold.
	\begin{enumerate}
		\item \label{item:rademacher_cka:taylor} For $a \in \mathbf
		R^n$, $S$ is $\boldsymbol \nu_{\mathbf B(0,1)}^0$~pointwise
		differentiable of order $(k-1,1)$ at~$a$ and $\pt \Der^m S(a)
		= \pt \Der^m f(a)$ for $m = 0, \ldots, k-1$.
		\item \label{item:rademacher_cka:diff} For $\mathscr L^n$
		almost all $a$, $S$ is pointwise differentiable of order~$k$
		at~$a$.
		\item \label{item:rademacher_cka:dual_sep} If $Y^\ast$ is
		separable, then $S$ is $\boldsymbol \nu_{\mathbf B (0,1)}^0$
		pointwise differentiable of order~$k$ at $\mathscr L^n$~almost
		all~$a$.
	\end{enumerate}
\end{lemma}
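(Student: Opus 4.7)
For statement~\eqref{item:rademacher_cka:taylor}, the plan is to apply a direct Taylor estimate. Fix $a \in \mathbf R^n$ and let $P_a : \mathbf R^n \to Y^\ast$ denote the polynomial of degree at most~$k-1$ determined by $\Der^m P_a(a) = \Der^m f(a)$ for $m = 0, \ldots, k-1$. Pairing with $y \in Y$ of norm at most one and applying the classical Taylor estimate to the real-valued function $\langle y, f \rangle$ (of class~$(k-1,1)$ with $(k-1)$-th derivative Lipschitz constant at most $M = \Lip \Der^{k-1} f$) yields, upon taking the supremum over such~$y$, the remainder bound $\| f(x) - P_a(x) \| \leq (M/k!)|x-a|^k$ for $x \in \mathbf R^n$. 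Writing $S_{P_a}$ for the distribution induced by~$P_a$, this gives the crude estimate
\[
	r^{-k-n} \big | ( S - S_{P_a} )_x \big ( \phi ( r^{-1} (x-a) ) \big ) \big | \leq r^{-k-n} {\textstyle\int_{\mathbf B(a,r)}} \| f - P_a \| \ud \mathscr L^n \leq C,
\]
uniformly in $\phi \in \mathscr D_{\mathbf B(0,1)} ( \mathbf R^n, Y )$ with $\boldsymbol \nu_{\mathbf B (0,1)}^0 ( \phi ) \leq 1$ and in $0 < r < \infty$, for a constant~$C$ depending only on $M$, $n$, and~$k$. This is $\boldsymbol \nu_{\mathbf B(0,1)}^0$~pointwise differentiability of order~$(k-1,1)$ of~$S$ at~$a$, and~\ref{remark:norm_differentiability} identifies $S_{P_a}$ as the $(k-1)$~jet of~$S$, yielding $\pt \Der^m S(a) = \Der^m f(a)$ for $m = 0, \ldots, k-1$.

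For statements~\eqref{item:rademacher_cka:diff} and~\eqref{item:rademacher_cka:dual_sep}, the plan is to work through the top-order distributional partial derivatives. I will invoke~\ref{lemma:weakly_diff_cka} to obtain an essentially bounded, $\mathscr L^n$~measurable function $g : \mathbf R^n \to Z_k$ such that, for every $o \in \boldsymbol \Xi (n,k)$, the distribution $\Der^o S$ is represented by integration against the $Y^\ast$~valued function $\langle e^o, g \rangle$. The latter function is $\mathscr L^n$~measurable in the $Y$~topology on~$Y^\ast$, as pairing $\langle e^o, g(x) \rangle$ with $y \in Y$ amounts, via the identifications in~\ref{miniremark:setup}, to evaluating $g(x)$ at $y \otimes e^o \in Y \otimes \bigodot_k \mathbf R^n$. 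Applying~\ref{lemma:distribution_function}\,\eqref{item:distribution_function:weak_lebesgue} to each~$\Der^o S$ and taking the finite union of the exceptional null sets, \ref{corollary:characterisation_pt_diff} then delivers pointwise differentiability of order~$k$ of~$S$ at $\mathscr L^n$~almost every point, establishing~\eqref{item:rademacher_cka:diff}. Under the additional separability hypothesis on~$Y^\ast$, the stronger~\ref{lemma:distribution_function}\,\eqref{item:distribution_function:lebesgue} provides $\boldsymbol \nu_{\mathbf B(0,1)}^0$~pointwise differentiability of order~$0$ of each~$\Der^o S$ almost everywhere, and~\ref{thm:pt_diff_derivatives}\,\eqref{item:pt_diff_derivatives:integral} with $i = 0 = l$, $m = 0$, and $\xi$ the zero multi-index then promotes this to $\boldsymbol \nu_{\mathbf B(0,1)}^0$~pointwise differentiability of order~$k$ of~$S$, yielding~\eqref{item:rademacher_cka:dual_sep}.

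The main obstacle, I expect, lies in the bookkeeping bridge between~\ref{lemma:weakly_diff_cka} and~\ref{lemma:distribution_function}: verifying that $\Der^o S$ is genuinely represented by integration against~$\langle e^o, g \rangle$ in the precise sense required, and that the latter inherits $Y$~topology measurability from the $Y \otimes \bigodot_k \mathbf R^n$~topology measurability of~$g$. Once these identifications are in place,~\eqref{item:rademacher_cka:diff} and~\eqref{item:rademacher_cka:dual_sep} follow routinely, while~\eqref{item:rademacher_cka:taylor} is handled entirely by the Taylor estimate.
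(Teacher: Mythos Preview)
Your proposal is correct and follows essentially the same route as the paper: obtain the weak $k$-th derivative~$g$ from~\ref{lemma:weakly_diff_cka}, represent each $\Der^o S$ by integration against $\langle e^o, g \rangle$, apply~\ref{lemma:distribution_function} to these top-order derivatives, and then lift to~$S$ via~\ref{thm:pt_diff_derivatives}. The measurability verification you flag as the ``main obstacle'' is exactly what the paper glosses over in its one-sentence proof, and your argument for it (via the pairing $\langle y \otimes e^o, g(x) \rangle$ and the description of Borel sets in~\ref{miniremark:setup}) is the right one.

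The one genuine difference is your treatment of~\eqref{item:rademacher_cka:taylor}. The paper handles all three parts uniformly through the top-order derivatives: boundedness of~$g$ gives, via~\ref{lemma:distribution_function}\,\eqref{item:distribution_function:norm}, that each $\Der^o S$ is $\boldsymbol\nu_{\mathbf B(0,1)}^0$~pointwise differentiable of order~$(-1,1)$ at every point, and then~\ref{thm:pt_diff_derivatives}\,\eqref{item:pt_diff_derivatives:hoelder} with $l=-1$, $\alpha=1$ lifts this to order~$(k-1,1)$ for~$S$. You instead apply Taylor's theorem directly to~$f$, which is more elementary and makes the identification $\pt\Der^m S(a) = \Der^m f(a)$ immediate (whereas the paper's route requires a little more work to extract it). One minor imprecision: since class~$(k-1,1)$ is a local condition, $M = \Lip \Der^{k-1} f$ should be taken as a local Lipschitz constant near~$a$; your estimate then holds for~$r$ bounded, which suffices for the $\limsup$.
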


\begin{proof}
	In view of~\ref{lemma:weakly_diff_cka}, one may apply
	\ref{lemma:distribution_function}\,\eqref{item:distribution_function:weak_lebesgue}\,\eqref{item:distribution_function:norm}\,\eqref{item:distribution_function:norm_diff}
	with $S$ replaced by $\Der^\xi T$ whenever $\xi \in
	\boldsymbol \Xi (n,k)$.  Therefore, the conclusion follows from
	\ref{thm:pt_diff_derivatives}\,\eqref{item:pt_diff_derivatives:integral}\,\eqref{item:pt_diff_derivatives:hoelder}\,\eqref{item:pt_diff_derivatives:top_integral}.
\end{proof}

\begin{theorem} [Gelfand's Rademacher theorem] \label{thm:rademacher_gelfand}
	Suppose $Y$ is a Banach space, $Y^\ast$ is separable, and $f : \mathbf
	R^n \to Y^\ast$ is locally Lipschitzian.

	Then, $f$ is differentiable at $\mathscr L^n$ almost all $a$.
\end{theorem}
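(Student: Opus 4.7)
The plan is to deduce Theorem~\ref{thm:rademacher_gelfand} from the distributional differentiability result Lemma~\ref{lemma:rademacher_cka}\,\eqref{item:rademacher_cka:dual_sep}, upgrading the integral-type decay it provides to genuine pointwise decay by exploiting the Lipschitz hypothesis on~$f$. First, I would reduce to the case that $f$ is globally Lipschitz: for each positive integer~$j$ pick $\chi_j \in \mathscr D ( \mathbf R^n, \mathbf R )$ with $\chi_j = 1$ on $\mathbf B (0,j)$ and set $f_j = \chi_j f$.  Since $f$ is locally Lipschitz, hence locally bounded, each $f_j : \mathbf R^n \to Y^\ast$ is globally Lipschitz, i.e., of class~$(0,1)$, and agrees with $f$ on $\mathbf B (0,j)$; since differentiability is local and $\mathbf R^n = \bigcup_j \mathbf B (0,j)$, it suffices to verify the conclusion for each~$f_j$. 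Hence assume $f$ itself is globally Lipschitz, and define $S \in \mathscr D'( \mathbf R^n, Y)$ by $S ( \phi ) = \int \langle \phi, f \rangle \ud \mathscr L^n$.  Applying Lemma~\ref{lemma:rademacher_cka}\,\eqref{item:rademacher_cka:dual_sep} with $k=1$, using the separability of~$Y^\ast$, yields that $S$ is $\boldsymbol \nu_{\mathbf B(0,1)}^0$~pointwise differentiable of order~$1$ at $\mathscr L^n$~almost all~$a$.

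Combining this with Lemma~\ref{lemma:distribution_function}\,\eqref{item:distribution_function:weak_lebesgue} and~\eqref{item:distribution_function:norm_diff}, at almost every such~$a$ there exists an affine polynomial $P : \mathbf R^n \to Y^\ast$ with $P(a) = f(a)$ satisfying
\begin{equation*}
	\lim_{r \to 0+} r^{-n-1} {\textstyle \int_{\mathbf B (a,r)}} \| f - P \| \ud \mathscr L^n = 0.
\end{equation*}
To finish, I would upgrade this averaged decay to the pointwise statement $\| f(x) - P(x) \| = o(|x-a|)$ as $x \to a$, which is precisely differentiability of~$f$ at~$a$ with differential~$\Der P$.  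For this, observe that $g := f - P$ is Lipschitz with some constant~$L$ and vanishes at~$a$; if there were $\eta > 0$ and a sequence $x_j \to a$ with $\| g ( x_j ) \| \geq \eta | x_j - a |$, then, setting $\rho_j = | x_j - a |$, one would have $\| g \| \geq \eta \rho_j/2$ throughout $\mathbf B ( x_j, \eta \rho_j / (2L) ) \subset \mathbf B (a,2\rho_j)$, forcing
\begin{equation*}
	(2\rho_j)^{-n-1} {\textstyle \int_{\mathbf B (a,2\rho_j)}} \| g \| \ud \mathscr L^n \geq \boldsymbol \alpha (n) \eta^{n+1} / (4^{n+1} L^n),
\end{equation*}
which contradicts the displayed vanishing.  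Hence $\| g(x) \|/|x-a| \to 0$, completing the proof.

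The main obstacle I anticipate is this final step: the passage from an averaged distributional condition to genuine pointwise decay. The Lipschitz hypothesis on~$f$ is the decisive ingredient, because any hypothetical failure of pointwise decay for the Lipschitz function $g = f - P$ propagates to a ball of macroscopic radius, producing a lower bound for the integral average that is incompatible with its vanishing. The same strategy would break down for distributions not representable by a Lipschitz function, which reflects why the theorem needs the Lipschitz hypothesis rather than merely local integrability.
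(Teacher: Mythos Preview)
Your argument is correct, but it takes a genuinely different route from the paper.  The paper's proof is a two-line citation to external literature: separable duals have the Radon--Nikod\'ym property (Benyamini--Lindenstrauss~\cite[5.12\,(i)]{MR1727673}), and RNP-valued locally Lipschitz maps are almost everywhere differentiable (\cite[4.3, 6.41]{MR1727673}).  You instead stay entirely inside the paper's own distributional machinery: Lemma~\ref{lemma:rademacher_cka}\,\eqref{item:rademacher_cka:dual_sep} furnishes the integral-mean decay $r^{-n-1}\int_{\mathbf B(a,r)}\|f-P\|\,\ud\mathscr L^n\to0$, and the Lipschitz hypothesis lets you upgrade this to genuine pointwise differentiability via the ``bad point spreads to a ball'' contradiction.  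One small gap to patch: Lemma~\ref{lemma:rademacher_cka} assumes $Y$ separable, whereas the theorem only assumes $Y^\ast$ separable; you should invoke~\cite[II.3.16]{MR0117523} (as the paper does in~\ref{remark:separability}) to pass from the latter to the former.  The paper's approach is of course much shorter, but yours has the merit of showing that Gelfand's theorem is recoverable from the paper's own Poincar\'e-based framework without appeal to the Radon--Nikod\'ym property---a nice internal consistency check, and arguably more elementary for a reader who has absorbed Sections~2 and~3 but is unfamiliar with RNP theory.
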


\begin{proof}
	By \cite[5.12\,(i)]{MR1727673}, $Y^\ast$ has the Radon-Nikodým
	property in the sense of~\cite[5.4]{MR1727673}.  Therefore, the
	conclusion follows from \cite[4.3, 6.41]{MR1727673}.
\end{proof}

\begin{remark}
	The main case $n=1$ originates from~\cite[\S\,9, Satz~2,
	Hilfssatz~5]{msb_v46_i2_p235}.  By~\ref{miniremark:setup}, that case
	also follows from the fact $f(b)-f(a) = \int_a^b \langle 1, g
	(x) \rangle \ud \mathscr L^1 x$ for $- \infty < a < b < \infty$, where
	$g$ is as in~\ref{lemma:weakly_diff_cka} with $k=1$, via \cite[2.8.18,
	2.9.9]{MR41:1976}.
\end{remark}

\begin{theorem}	[Whitney's extension theorem] \label{thm:whitney_cka}
	Suppose $Y$ is a normed vectorspace, $k$ is a nonnegative integer, $0
	< \alpha \leq 1$, $A$ is a closed subset of~$\mathbf R^n$, and to
	each $a \in A$ corresponds a polynomial function
	\begin{equation*}
		\text{$P_a : \mathbf R^n \to Y$ with $\degree P_a \leq k$}.
	\end{equation*}
	Whenever $C \subset A$ and $\delta > 0$, let $\varrho (C,\delta)$ be
	the supremum of the set of all numbers
	\begin{gather*}
		\| \Der^m P_a(b) - \Der^m P_b(b) \| \cdot |a-b|^{m-k} \cdot
		(k-m)!
	\end{gather*}
	corresponding to $m = 0, \ldots, k$ and $a,b \in C$ with $0 < |a-b|
	\leq \delta$.

	If $\limsup_{\delta \to 0+} \delta^{-\alpha} \varrho (C,\delta) <
	\infty$ for each compact subset~$C$ of~$A$, then there exists a map $g
	: \mathbf R^n \to Y$ of class~$(k,\alpha)$ such that
	\begin{equation*}
		\Der^m g(a) = \Der^m P_a(a) \quad \text{for $m = 0, \ldots, k$
		and $a \in A$}.
	\end{equation*}
\end{theorem}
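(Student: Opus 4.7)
The plan is to reproduce the classical Whitney construction (see, e.g., \cite[3.1.14]{MR41:1976}), verifying that it works for an arbitrary normed target space~$Y$ because it only uses the triangle inequality in~$Y$. First, I would fix a Whitney decomposition of $U = \mathbf R^n \without A$ into a locally finite family of closed dyadic cubes~$Q_i$ with $\diam Q_i$ comparable to $\dist(Q_i,A)$, together with an associated smooth partition of unity~$\{\phi_i\}$ supported in slight enlargements $Q_i^\ast$ of~$Q_i$ and satisfying $\sup \im \| \Der^m \phi_i \| \leq c_m (\diam Q_i)^{-m}$ for every nonnegative integer~$m$. For each~$i$, I would select $a_i \in A$ realising $\dist(Q_i,A)$ up to a fixed factor, and define the extension
\begin{equation*}
	g(x) = \begin{cases} P_a(a) & \text{if $x = a \in A$}, \\ \sum_i \phi_i(x) P_{a_i}(x) & \text{if $x \in U$}. \end{cases}
\end{equation*}

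Smoothness of $g$ on $U$ is immediate from local finiteness, so the real task is to show that $g$ is of class~$(k,\alpha)$ and that $\Der^m g(a) = \Der^m P_a(a)$ for $a \in A$ and $m = 0, \ldots, k$. The heart of the proof is the Taylor-type estimate, for each compact $C \subset A$ and every $x \in U$ with $\dist(x,C)$ sufficiently small and every $a \in A$ nearest to~$x$,
\begin{equation*}
	\| \Der^m g(x) - \Der^m P_a(x) \| \leq \Gamma_C |x-a|^{k-m+\alpha} \quad \text{for $m = 0, \ldots, k$},
\end{equation*}
obtained by writing $g - P_a = \sum_i \phi_i (P_{a_i} - P_a)$ on a neighbourhood of~$a$ in~$U$ (valid since $\sum_i \phi_i = 1$ there), differentiating via the Leibniz rule, and bounding each factor $\Der^l (P_{a_i} - P_a)(x)$ by Taylor expansion about~$a_i$ combined with the hypothesis $\varrho(C', \delta) \leq M \delta^\alpha$ applied to a compact set $C' \subset A$ containing~$a$ and the finitely many relevant~$a_i$. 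The comparabilities $|a_i - a| \leq \gamma |x - a|$ and $|x - a_i| \leq \gamma |x - a|$, with a geometric constant~$\gamma$ depending only on the Whitney decomposition, together with the partition-of-unity derivative bounds at scale $\diam Q_i$, collapse the Leibniz expansion into the displayed estimate. Specialising to $m = 0$ gives $\| g(x) - P_a(x) \| \leq \Gamma_C |x - a|^{k + \alpha}$, which together with the $\varrho$ condition on $A$ itself yields the class~$(k, \alpha)$ property of $g$ and in particular $\Der^m g(a) = \Der^m P_a(a)$ for $a \in A$; a parallel estimate at scale $\dist(x,A)$ handles the Hölder continuity of $\Der^k g$ on compact subsets of~$U$.

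The step requiring the greatest care is the bookkeeping in this central estimate: one must organise the powers of $|a_i - a|$ and $|x - a_i|$ arising from Taylor remainders of varying orders against the scale-dependent partition-of-unity derivative bounds. No conceptual novelty is needed relative to the scalar case of~\cite[3.1.14]{MR41:1976}, because $Y$ being merely a normed space plays no role: completeness, separability and duality of~$Y$ are all irrelevant to the construction, and every norm estimate in the scalar argument transfers by replacing absolute values of coefficient differences by $Y$-norms via the triangle inequality.
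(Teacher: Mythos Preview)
Your plan is correct and rests on the same Whitney construction as the paper, so there is no genuine gap. The paper's proof, however, is organised differently: rather than rebuilding the partition-of-unity extension from scratch, it invokes \cite[3.1.14]{MR41:1976} as a black box to obtain $g$ of class~$k$ with the correct derivatives on~$A$ (noting that the proof there goes through verbatim for normed~$Y$), and then extracts from inside that proof two specific estimates---one on $\|\Der^{k+1} g(x)\|$ for $x \in U$ and one on $\|\Der^k g(x) - \Der^k g(a)\|$ for $a \in A$---which it uses in a short two-case argument (segment from $y$ to $z$ stays far from~$A$ versus passes close to~$A$) adapted from \cite[VI.2.2.2]{MR0290095} to upgrade to class~$(k,\alpha)$. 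Your approach derives these same estimates as a by-product of redoing the whole construction; the paper's route is more economical in print but relies on the reader having access to the internals of Federer's proof, whereas yours is self-contained. One small expository slip in your sketch: the $m=0$ specialisation alone does not give class~$(k,\alpha)$; you need the $m=k$ case of your Taylor estimate together with the $\Der^{k+1}$ bound on~$U$ (your ``parallel estimate'') and the same near/far case split to control $\|\Der^k g(y) - \Der^k g(z)\|$ for arbitrary nearby $y,z$.
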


\begin{proof}
	Assuming $A \neq \varnothing$, we proceed as in
	\cite[3.1.14]{MR41:1976}; hence, $U = \mathbf R^n \without A$ and
	there is a map $g : \mathbf R^n \to Y$ of class~$k$ such that $g | U$
	is of class~$\infty$ and
	\begin{equation*}
		\Der^m g(a) = \Der^m P_a(a) \quad \text{for $m = 0, \ldots, k$
		and $a \in A$}.
	\end{equation*}
	Noting that lines 19--23 of \cite[p.~226]{MR41:1976} remain valid if
	$i = k+1$, we record, from the proof of \cite[3.1.14]{MR41:1976}, the
	two estimates
	\begin{gather*}
		\| \Der^{k+1} g (x) \| \leq M_{k+1} \dist (x,A)^{-1} \varrho (
		C, 6 \dist (x,A) ) \quad \text{if $x \in U$}, \\
		\| \Der^k g (x) - \Der^k g(a) \| \leq (M_k+1) \varrho
		(C,6|x-a|)
	\end{gather*}
	whenever $a \in A$, $x \in \mathbf B (a,1/3)$, and $C = A \cap \mathbf
	B (a,2)$, where $M_k$ and $M_{k+1}$ are real numbers determined by $n$
	and $k$.

	Suppose $a \in A$, $C = A \cap \mathbf B (a,3)$, notice that there
	exists $0 \leq \kappa < \infty$ with $\varrho (C,\delta) \leq \kappa
	\delta^\alpha$ for $0 < \delta \leq 2$, and let $y,z \in \mathbf B
	(a,1/9)$.  If
	\begin{equation*}
		\dist (y+t(z-y),A) \geq |y-z|/2 \quad \text{for $0 \leq t \leq
		1$},
	\end{equation*}
	we obtain from the first estimate that
	\begin{equation*}
		\| \Der^k g (y)- \Der^k g (z) \| \leq 12 \kappa M_{k+1}
		|y-z|^\alpha
	\end{equation*}
	using~\cite[2.2.7, 3.1.1]{MR41:1976} and $\| \Der \Der^k g \| = \|
	\Der^{k+1} g \|$.  If
	\begin{equation*}
		\dist (y+t(z-y),A) < |y-z|/2 \quad \text{for some $0 \leq t
		\leq 1$},
	\end{equation*}
	then, we take $c \in A$ with $|y+t(z-y)-c| < |y-z|/2$, notice
	\begin{equation*}
		\sup \{ |y-c|, |z-c| \} \leq 3 |y-z|/2 \leq 1/3, \quad
		|c-a| \leq |c-y| + |y-a| \leq 1,
	\end{equation*}
	and obtain from the second estimate (with $a$ replaced by $c$)
	\begin{align*}
		\| \Der^k g (z) - \Der^k g (y) \| & \leq \| \Der^k g(z) -
		\Der^k g(c) \| + \| \Der^k g(c) - \Der^k g(y) \| \\
		& \leq 6 \kappa (M_k+1) ( |z-c|^\alpha + |y-c|^\alpha ) \leq
		18 \kappa (M_k+1) |z-y|^\alpha.
	\end{align*}
	Accordingly, the map $(\Der^k g )| \mathbf B ( a,1/9 )$ is Hölder
	continuous with exponent~$\alpha$.
\end{proof}

\begin{remark}
	The last paragraph of the proof is adapted from
	\cite[VI.2.2.2]{MR0290095}, where $k = 0$ and $Y = \mathbf R$; see
	also \cite[VI.2.3.1--VI.2.3.3]{MR0290095} for general $k$ and $Y =
	\mathbf R$.  In contrast to that source or, for the case $(k,\alpha) =
	(0,1)$ and $Y$ a Banach space, to \cite[Theorem 2]{MR852474}, our
	result concerns the extension to a function satisfying a local instead
	of a global regularity property.
\end{remark}

\begin{theorem} \label{thm:fed_3.1.15_revisited}
	Suppose $k$ is a nonnegative integer, $Z$ is a separable Banach space,
	$f : \mathbf R^n \to Z$ is of class~$k$, and $A = \dmn \Der \Der^k f$.

	Then, for each $\epsilon > 0$, there exists a map $g : \mathbf R^n
	\to Z$ of class~$k+1$ such that
	\begin{equation*}
		\mathscr L^n \big ( A \without \{ a \with \textup{$\Der^m
		f(a)= \Der^m g(a)$ for $m = 0, \ldots, k$} \} \big ) = 0.
	\end{equation*}
\end{theorem}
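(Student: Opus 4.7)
The plan is to use a Lusin-type reduction to a compact set $C \subset A$ of almost full measure on which the $(k+1)$-st order Taylor polynomials of~$f$ satisfy Whitney's compatibility condition for a $C^{k+1}$ extension, and then apply the $C^{k+1}$ version of Whitney's extension theorem.

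First I would observe that $A$ is a Borel set and that $\Der^{k+1} f : A \to \bigodot^{k+1}(\mathbf R^n, Z)$ is Borel measurable, since on $A$ it is a pointwise limit of difference quotients of the continuous function $\Der^k f$ with target the separable space $\bigodot^{k+1}(\mathbf R^n, Z)$. Because $f$ is of class~$k$ and $\Der^{k+1} f(a)$ exists for every $a \in A$, Peano's form of Taylor's theorem yields, for each fixed $a \in A$ and each $m = 0, \ldots, k+1$,
\[
\Der^m f(b) - \sum_{j=0}^{k+1-m} \langle (b-a)^j/j!, \Der^{m+j} f(a) \rangle = o(|b-a|^{k+1-m}) \quad \text{as $b \to a$}.
\]
Writing $A$ as the union of Borel pieces on which $\| \Der^{k+1} f \|$ is bounded and combining Egorov's and Lusin's theorems (valid in the separable target), one obtains, given $\epsilon > 0$, a compact $C \subset A$ with $\mathscr L^n(A \setminus C) < \epsilon$ such that $\Der^{k+1} f | C$ is continuous and the Peano remainders above hold uniformly in $a \in C$ as $b \to a$ in $C$.

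Setting $P_a(x) = \sum_{m=0}^{k+1} \langle (x-a)^m/m!, \Der^m f(a) \rangle$, the uniform Peano estimates translate, via the triangle inequality, into the Whitney compatibility
\[
\| \Der^m P_a(b) - \Der^m P_b(b) \| = o(|a-b|^{k+1-m}) \quad \text{as $a, b \in C$ with $|a-b| \to 0$},
\]
for $m = 0, \ldots, k+1$. A $C^{k+1}$-version of Whitney's extension theorem --- parallel to~\ref{thm:whitney_cka} with the Hölder modulus $\varrho(C, \delta) = O(\delta^\alpha)$ replaced by the vanishing modulus $\varrho(C, \delta) = o(\delta)$ at order~$k+1$ --- then supplies $g : \mathbf R^n \to Z$ of class~$k+1$ with $\Der^m g(a) = \Der^m f(a)$ for $a \in C$ and $m = 0, \ldots, k+1$. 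Exhausting $A$ by such compact pieces as $\epsilon \to 0$ (and, for the stated null-set form of the conclusion, patching the resulting sequence of $g_j$ by a partition of unity subordinate to a suitable cover refining the $C_j$) produces the required $g$.

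The main obstacle is the uniformisation step: the Peano-Taylor remainders hold only pointwise on~$A$ a priori, and upgrading to uniform convergence on a compact piece of almost full measure is what allows the Whitney machinery to produce genuine class~$k+1$ regularity rather than merely class~$(k,1)$. This upgrade rests on the Borel measurability of~$\Der^{k+1} f$ in the separable target (which makes the relevant suprema measurable and thus Egorov applicable) together with the preliminary decomposition into pieces on which $\| \Der^{k+1} f \|$ is bounded; both ingredients use the separability hypothesis on~$Z$ essentially.
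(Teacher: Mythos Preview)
Your approach is essentially the same as the paper's, which simply says to follow Federer's proof of~\cite[3.1.15]{MR41:1976} with the obvious replacements (Borel measurability of $\Der\Der^k f$ via~\cite[3.1.1]{MR41:1976}, and $Z$ in place of~$\mathbf R^n$ as target). Federer's argument is exactly the Lusin--Egorov reduction to a compact piece followed by the $C^{k+1}$ Whitney extension that you outline.

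Two small points. First, the ``$=0$'' in the displayed conclusion is evidently a slip for ``$<\epsilon$'' (note the vestigial quantifier and the way the result is invoked in the proof of~\ref{thm:k_lusin_approximation}), so your final patching step via a partition of unity is unnecessary; the single compact $C$ with $\mathscr L^n(A\setminus C)<\epsilon$ already suffices. That patching would in any case require more care than you indicate, since at points where several $\chi_j$ overlap you need \emph{all} the active $g_j$ to match $f$ to order $k+1$, which a naive cover will not guarantee. Second, the Whitney hypothesis you want at order $k+1$ is $\varrho(C,\delta)\to 0$ (i.e.\ $o(1)$, the condition in Federer's~\cite[3.1.14]{MR41:1976}), not $o(\delta)$.
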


\begin{proof}
	We may proceed as in~\cite[3.1.15]{MR41:1976}; in fact, it is
	sufficient to replace ``Assured \ldots\ measurable'', $m$, and
	$\mathbf R^n$ in its proof by ``Assured by \cite[3.1.1]{MR41:1976}
	that $A$ is a Borel set and $\Der \Der^k f$ is a Borel function'',
	$n$, and $Z$, respectively.
\end{proof}

\begin{remark} \label{remark:fed_3.1.15_revisited}
	In contrast to~\cite[3.1.15]{MR41:1976}, one may not replace $A$ by
	\begin{equation*}
		\mathbf R^n \cap \{ a \with \limsup_{x\to a} |f(x)-f(a)|/|x-a|
		< \infty \};
	\end{equation*}
	in fact, in view of~\cite[2.1]{MR3626845}, we obtain a separable
	Banach space $Z$ and a Lipschitzian function $f : \mathbf R \to Z$
	with $\mathscr L^1 ( \mathbf R \without \dmn \Der f ) > 0$
	from~\cite[p.\,265]{msb_v46_i2_p235}, observe $\dmn \Der f = \dmn \ap
	\Der f$ by~\cite[3.1.5]{MR41:1976}, and note that, for $g : \mathbf R
	\to Z$ of class~$1$, \cite[2.8.18, 2.9.11]{MR41:1976} yields $\mathscr
	L^1 ( \{ a \with f(a)=g(a) \} \without \ap \Der f ) = 0$.
\end{remark}

\begin{theorem} \label{thm:cka}
	Suppose $k$ is an integer, $0 < \alpha \leq 1$, $k + \alpha \geq 0$,
	$Y$ is a Banach space, $T \in \mathscr D' ( \mathbf R^n, Y )$, and
	$A$~is the set of points at which $T$ is pointwise differentiable of
	order~$(k,\alpha)$.

	Then, there exists a sequence of compact subsets $C_j$ of~$\mathbf
	R^n$ with $A = \bigcup_{j=1}^\infty C_j$ and, if $k \geq 0$, also a
	sequence of functions $f_j : \mathbf R^n \to Y^\ast$ of
	class~$(k,\alpha)$ satisfying
	\begin{equation*}
		\pt \Der^m T (a) = \Der^m f_j (a) \quad \text{for $a \in C_j$
		and $m = 0, \ldots, k$}
	\end{equation*}
	whenever $j$ is a positive integer.
\end{theorem}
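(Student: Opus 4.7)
The plan is to stratify $A$ by seminorm order and quantitative bounds, use the polynomial estimate of \ref{lemma:polynomial_fcts} (exploiting its lens-shaped domain) to verify a Whitney-type compatibility condition for the associated jets, and then apply Whitney's extension theorem \ref{thm:whitney_cka}.

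By \ref{remark:norm_differentiability} together with \ref{remark:uniform_boundedness}, for each $a \in A$ there exist a nonnegative integer $i$ and $0 \leq M < \infty$, $0 < s < \infty$ such that, writing $S_a$ for the $k$ jet of $T$ at $a$ (with $S_a = 0$ when $k = -1$),
\begin{equation*}
	\big| (T-S_a)_x \big( \phi (r^{-1}(x-a)) \big) \big| \leq M r^{k+\alpha+n} \sup \im \| \Der^i \phi \|
\end{equation*}
whenever $0 < r \leq s$ and $\phi \in \mathscr D_{\mathbf B(0,1)} ( \mathbf R^n, Y )$.  For positive integers $i,j$, I define $A_{i,j}$ as the set of $a \in A \cap \mathbf B(0,j)$ for which the preceding inequality holds with $M = j$ and $s = 1/j$ and, if $k \geq 0$, the polynomial $P_a$ associated to $S_a$ satisfies $\sum_{m=0}^{k} \| \Der^m P_a (a) \| \leq j$.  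Evidently $A = \bigcup_{i,j} A_{i,j}$.  I would verify that each $A_{i,j}$ is closed, hence compact: given $a_\ell \in A_{i,j}$ with $a_\ell \to a$, the polynomials $P_{a_\ell}$ lie in a bounded subset of the finite dimensional space of polynomials of degree at most $k$, so a subsequence converges to some polynomial $P^\ast$; the displayed inequality passes to the limit in $\ell$ for each fixed $\phi$ and $r$, whence $a \in A$ and $P^\ast = P_a$ by \ref{lemma:uniqueness}; uniqueness of the limit forces the whole sequence to converge, and all defining bounds persist.

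For $k \geq 0$, I next establish the Whitney compatibility on $A_{i,j}$.  Fix $a, b \in A_{i,j}$ with $0 < \delta = |a-b| \leq 1/j$ and a unit vector $y \in Y$, consider the real polynomial $R (x) = \langle y, P_a (x) - P_b (x) \rangle$ of degree at most $k$, and set $K' = \mathbf B(a, \delta) \cap \mathbf B(b, \delta)$, which is precisely the translate and dilation by $\delta$ of the lens $\mathbf B(v,1) \cap \mathbf B(0,1)$ in \ref{lemma:polynomial_fcts} with $v = (a-b)/\delta$.  For $\phi \in \mathscr D_{K'} ( \mathbf R^n, \mathbf R)$ with $\sup \im \| \Der^i \phi \| \leq 1$, since $\spt (\phi \otimes y) \subset \mathbf B(a, \delta) \cap \mathbf B(b, \delta)$, the defining inequality of $A_{i,j}$ applied at both $a$ and $b$ with scale $\delta$ gives
\begin{equation*}
	\big| {\textstyle\int} \phi R \ud \mathscr L^n \big| = | (S_a - S_b)(\phi \otimes y) | \leq 2j \delta^{k+\alpha+n+i}.
\end{equation*}
The rescaled version of \ref{lemma:polynomial_fcts} applied to $R$ at $b$ with scale $\delta$ and norm $\boldsymbol \nu_{\mathbf B(0,1)}^i$ (as in \ref{remark:polynomial_fcts}, but with $K'$ in place of $\mathbf B(b,\delta)$) then yields $\| \Der^m ( P_a - P_b)(b) \| \leq 2 j \Gamma \delta^{k+\alpha-m}$ for $m = 0, \ldots, k$, so $\varrho(A_{i,j}, \delta) \leq C \delta^\alpha$ for a suitable $C$ depending only on $i, j, k, n$.

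Finally, \ref{thm:whitney_cka} applied to $A_{i,j}$ with the assignment $a \mapsto P_a$ delivers $f_{i,j} : \mathbf R^n \to Y^\ast$ of class $(k, \alpha)$ satisfying $\Der^m f_{i,j} (a) = \Der^m P_a (a) = \pt \Der^m T (a)$ for $a \in A_{i,j}$ and $m = 0, \ldots, k$; a re-enumeration of the pairs $(i,j)$ produces the required sequences $C_j$ and $f_j$.  For $k = -1$ only the stratification step is needed.  The main obstacle is the Whitney-type estimate of the previous paragraph: the specific lens shape of $K$ in \ref{lemma:polynomial_fcts} is indispensable in order to simultaneously apply the quantitative bound at $a$ and at $b$ to test functions supported in $\mathbf B(a, \delta) \cap \mathbf B(b, \delta)$ when $|a-b| = \delta$ is comparable to the radius of the test ball.
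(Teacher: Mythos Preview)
Your overall strategy matches the paper's: stratify $A$ by a quantitative order-$i$ bound, prove each stratum is compact, use the lens-shaped set in \ref{lemma:polynomial_fcts} to obtain the Whitney compatibility estimate, and apply \ref{thm:whitney_cka}. The Whitney step is carried out correctly and for the same reason as in the paper; your observation that the lens $K=\mathbf B(v,1)\cap\mathbf B(0,1)$ is exactly what is needed to test simultaneously at $a$ and at $b$ is precisely the point.

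There is, however, a genuine gap in your compactness argument. You write that ``the polynomials $P_{a_\ell}$ lie in a bounded subset of the finite dimensional space of polynomials of degree at most $k$''. This is false unless $\dim Y<\infty$: the polynomials take values in $Y^\ast$, so the coefficient space $\bigoplus_{m=0}^k\bigodot^m(\mathbf R^n,Y^\ast)$ is in general infinite dimensional, and norm-bounded sets need not be sequentially compact. The paper circumvents this by endowing $Z_m=\bigodot^m(\mathbf R^n,Y^\ast)$ with the $Y\otimes\bigodot_m\mathbf R^n$ topology (i.e.\ the weak-$\ast$ topology), see \ref{miniremark:setup}, and defining compact sets $L_j$ of pairs $(a,\psi)$ in $\mathbf R^n\times\bigoplus_m Z_m$; each defining inequality is closed in $(a,\psi)$ for fixed $\phi$ and $r$, and the bound $\|\psi_m\|\le j$ cuts out a weak-$\ast$ compact set by Banach--Alaoglu, so $L_j$ is compact and $C_j$ is its continuous image. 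Your argument can be repaired in exactly this way: extract weak-$\ast$ convergent sub\emph{nets} of the coefficients $\Der^mP_{a_\ell}(a_\ell)$, and verify that the defining inequality (rewritten, as the paper does, with $\psi_m$ appearing through the pairing $\langle\int\phi(\tfrac{x-a}{r})\otimes\tfrac{(x-a)^m}{m!}\,d\mathscr L^n x,\psi_m\rangle$) is weak-$\ast$ continuous in $\psi$ for fixed $\phi,r$. Note also that $Y$ is not assumed separable here, so you cannot appeal to metrisability of the weak-$\ast$ ball; working with the closed-set description, as the paper does, avoids sequences altogether.
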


\begin{proof}
	Suppose $Z_m$ are defined and endowed with the $Y \otimes \bigodot_m
	\mathbf R^n$~topology as in~\ref{miniremark:setup}.  Whenever $j$ is a
	positive integer, we let $\nu_j = \boldsymbol \nu_{\mathbf
	B(0,1)}^j | \mathscr D_{\mathbf B(0,1)} ( \mathbf R^n, Y )$ and define
	compact sets~$L_j$ to consist of those $(a,\psi) \in \mathbf R^n
	\times \bigoplus_{m=0}^k Z_m$ satisfying
	\begin{gather*}
		|a| \leq j, \qquad \| \psi_m \| \leq j \quad \text{for $m = 0,
		\ldots, k$}, \\
		\left | T_x \big ( \phi ( {\textstyle\frac{x-a}r}
		) \big ) - \sum_{m=0}^k \left \langle {\textstyle \int
		\phi \left ( {\textstyle\frac{x-a}r} \right ) \otimes
		\frac{(x-a)^m}{m!} \ud \mathscr L^n \, x} , \psi_m \right
		\rangle \right | \leq j r^{k+\alpha+n}
	\end{gather*}
	whenever $0 < r \leq j^{-1}$, $\phi \in \mathscr D_{\mathbf B (0,1)} (
	\mathbf R^n, Y )$, and $\nu_j ( \phi ) \leq 1$.%
	\begin{footnote}
		{Concerning the case $k=-1$, recall that the direct sum of an
		empty family of vectorspaces is the zero vectorspace and that
		the sum over the empty set equals zero.}
	\end{footnote}%
	Then, we have
	\begin{equation*}
		\left ( A\times \bigoplus_{m=0}^k Z_m \right ) \cap \{ (a,\psi)
		\with \text{$\psi_m = \pt \Der^m T(a)$ for $m = 0, \ldots, k$}
		\} = \bigcup_{j=1}^\infty L_j
	\end{equation*}
	by~\ref{remark:norm_differentiability}.  We define compact sets $C_j =
	\{ a \with \text{$(a,\psi) \in L_j$ for some $\psi$} \}$, whenever $j$
	is a positive integer, as well as polynomial functions $P_a : \mathbf
	R^n \to Y^\ast$ by letting
	\begin{equation*}
		P_a(x) = \sum_{m=0}^k \langle (x-a)^m/m!, \pt \Der^m T(a)
		\rangle \quad \text{for $a \in A$ and $x \in \mathbf R^n$}.
	\end{equation*}

	To apply~\ref{thm:whitney_cka}, we suppose $j$ is a positive integer
	and $a, b \in C_j$, $0 < |a-b| \leq 1/j$.  Abbreviating $r = |b-a|$,
	$v = r^{-1}(b-a)$, and $K = \mathbf B (v,1) \cap \mathbf B (0,1)$, we
	notice $ r^{-1} (x-a) = r^{-1} (x-b) + v$ for $x \in \mathbf R^n$ and
	infer
	\begin{align*}
		\big | {\textstyle\int \langle \phi ( \frac{x-a}r),
		(P_a-P_b)(x) \rangle \ud \mathscr L^n \, x}  \big | & \leq
		\big | {\textstyle\int \langle \phi ( \frac{x-a}r ), P_a(x)
		\rangle \ud \mathscr L^m \, x - T_x \big ( \phi ( \frac{x-a}r
		) \big)} \big | \\
		& \phantom \leq \ + \big | {\textstyle T_x \big ( \theta (
		\frac{x-b}r ) \big ) - \int \langle \theta ( \frac{x-b}r
		), P_b (x) \rangle \ud \mathscr L^m \, x} \big | \\
		& \leq 2 j r^{k+\alpha+n}
	\end{align*}
	whenever $\phi \in \mathscr D_K ( \mathbf R^n, Y )$ and $\nu_j ( \phi)
	\leq 1$, where $\theta \in \mathscr D_{\mathbf B(0,1)} (\mathbf R^n, Y
	)$ satisfies $\theta (x) = \phi(x+v)$ for $x \in \mathbf R^n$.
	Consequently, applying~\ref{lemma:polynomial_fcts} with $\nu$ and
	$P(x)$ replaced by $\nu_j$ and $\langle y, (P_a-P_b)(a+rx)\rangle$,
	whenever $y \in Y$, we conclude
	\begin{equation*}
		\sup \{ r^m \| \Der^m (P_a-P_b)(x) \| \with x \in \mathbf B
		(a,r), m = 0, \ldots, k \} \leq 2j \kappa r^{k+\alpha},
	\end{equation*}
	where $0 \leq \kappa < \infty$ is determined by $j$, $k$, and~$n$;
	hence, \ref{thm:whitney_cka} yields the conclusion.
\end{proof}

\begin{corollary}
	The set $A$ is a Borel subset of~$\mathbf R^n$ and the functions
	mapping $a \in A$ onto $\Der^m T (a) \in \bigodot^m ( \mathbf R^n,
	Y^\ast )$ are Borel functions for $m = 0, \ldots, k$.
\end{corollary}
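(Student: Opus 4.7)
The plan is to invoke Theorem~\ref{thm:cka} directly. First I would observe that Theorem~\ref{thm:cka} exhibits $A$ as the countable union $\bigcup_{j=1}^\infty C_j$ of compact subsets of~$\mathbf R^n$, hence as an $F_\sigma$ set, hence as a Borel subset of~$\mathbf R^n$; this disposes of the first assertion with no further work.

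For the Borel regularity of the pointwise differentials (reading the symbol $\Der^m T$ in the statement as $\pt \Der^m T$, which is what is actually at issue), I would restrict to the case $k \geq 0$, the case $k=-1$ being vacuous, and invoke the functions $f_j : \mathbf R^n \to Y^\ast$ of class~$(k,\alpha)$ produced by Theorem~\ref{thm:cka}, which satisfy
\begin{equation*}
\pt \Der^m T(a) = \Der^m f_j(a) \quad \text{for $a \in C_j$ and $m=0,\ldots,k$}.
\end{equation*}
Since each $f_j$ is of class~$k$, the derivative $\Der^m f_j : \mathbf R^n \to Z_m$ is continuous with respect to the norm topology on~$Z_m$, hence Borel with respect to the norm topology, and a fortiori Borel with respect to the coarser $Y \otimes \bigodot_m \mathbf R^n$ topology on~$Z_m$ singled out in~\ref{miniremark:setup}.

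Given any Borel subset $B$ of~$Z_m$ with respect to the chosen topology, I would then decompose
\begin{equation*}
(\pt \Der^m T)^{-1}(B) = \bigcup_{j=1}^\infty \big( C_j \cap (\Der^m f_j)^{-1}(B) \big),
\end{equation*}
observe that each summand is the intersection of a compact set with a Borel set and is therefore Borel, and conclude that the entire preimage is a Borel subset of~$\mathbf R^n$. There is no substantive obstacle beyond correctly citing Theorem~\ref{thm:cka}; the only item requiring a moment of attention will be matching the intended topology on~$Z_m$ with the correct class of Borel sets on the target, and remarking that the stronger norm-Borel statement comes for free from the norm-continuity of $\Der^m f_j$.
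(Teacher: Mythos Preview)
Your proposal is correct and is precisely the argument the paper intends: the corollary is stated immediately after Theorem~\ref{thm:cka} with no proof, so it is meant to follow directly from the decomposition $A=\bigcup_{j=1}^\infty C_j$ and the identities $\pt \Der^m T|C_j = \Der^m f_j|C_j$ with $\Der^m f_j$ continuous in the norm topology on $\bigodot^m(\mathbf R^n,Y^\ast)$. Your remark that norm-continuity of $\Der^m f_j$ yields Borel measurability with respect to the norm topology (and hence also the weaker $Y\otimes\bigodot_m\mathbf R^n$ topology) is exactly the right observation, and no separability hypothesis on $Y$ is needed here, in contrast to Theorem~\ref{thm:borel_derivatives}.
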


\begin{remark} \label{remark:zielezny_sigma_compact}
	In case $n=1$, $k=-1$, $\alpha=1$, $Y = \mathbf C$, and $\mathbf
	C$~linear $T$, the fact that $A$ is the union of a countable family of
	compact sets was already obtained in~\cite[\S\,3.1
	Folgerung~2]{MR0113134}.
\end{remark}

\begin{theorem} \label{thm:borel_derivatives}
	Suppose $k$ is a nonnegative integer, $Y$ is a separable Banach space,
	$Z_k$ is defined and endowed with the $Y \otimes \bigodot_k \mathbf
	R^n$ topology as in \ref{miniremark:setup}, $T \in \mathscr D' (
	\mathbf R^n, Y )$, and $A$ is the set of points at which $T$~is
	pointwise differentiable of order~$k$.
	
	Then, $A$~is a Borel set and the function mapping $a \in A$ onto $\pt
	\Der^k T (a) \in Z_k$ is a Borel function.
\end{theorem}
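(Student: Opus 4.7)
The plan is to reduce the general case to the base case $k=0$ via Corollary \ref{corollary:characterisation_pt_diff}, and to treat the base case by combining Theorem \ref{thm:cka} (for the weaker order $(-1,1)$) with Lemma \ref{lemma:dense_diff_crit} applied to a countable sequentially dense subset of test functions.

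For the base case $k=0$: since order-$0$ pointwise differentiability implies order-$(-1,1)$, one has $A \subset A^{(-1,1)}$, and the latter is Borel with a $\sigma$-compact decomposition $A^{(-1,1)} = \bigcup_j C_j$ enjoying uniform estimates
\begin{equation*}
\bigl| r^{-n} T_x \bigl( \phi ( r^{-1} (x-a) ) \bigr) \bigr| \leq j \, \boldsymbol \nu_{\mathbf B(0,1)}^j (\phi)
\end{equation*}
for $a \in C_j$, $0 < r \leq 1/j$, and $\phi \in \mathscr D_{\mathbf B(0,1)} ( \mathbf R^n, Y)$, by the proof of Theorem \ref{thm:cka}. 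I would fix a countable sequentially dense subset $\Delta_0$ of $\mathscr D ( \mathbf R^n, \mathbf R)$, a countable $\mathbf Q$-linear $\mathbf Q$-dense subspace $D_Y$ of $Y$, and a distinguished $\zeta_0 \in \Delta_0$ with $\spt \zeta_0 \subset \mathbf B(0,1)$ and $\int \zeta_0 \ud \mathscr L^n = 1$; by Example \ref{example:denseness}, the $\mathbf Q$-span $\Delta$ of $\{ \zeta y \with \zeta \in \Delta_0, y \in D_Y \}$ is then countable and sequentially dense in $\mathscr D ( \mathbf R^n, Y)$. For $\zeta \in \Delta_0$ and $y \in Y$, define $L_\zeta (y, a) = \lim_{r \to 0+} r^{-n} T_x ( \zeta (r^{-1}(x-a)) y )$ whenever the limit exists, a Borel condition in $a$ with $a \mapsto L_\zeta (y,a)$ Borel on the set where it is defined. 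I would then claim that $A$ equals the Borel set $B$ consisting of those $a \in A^{(-1,1)}$ for which $L_{\zeta_0}(y,a)$ exists for every $y \in D_Y$ and $L_\zeta (y,a) = L_{\zeta_0}(y,a) \int \zeta \ud \mathscr L^n$ for every $\zeta \in \Delta_0$ and $y \in D_Y$. The inclusion $A \subset B$ is immediate from the definition; for $B \subset A$, given $a \in B \cap C_j$, the uniform estimate applied with $\phi = \zeta_0 y$ yields $|L_{\zeta_0}(y, a)| \leq j \, \boldsymbol \nu_{\mathbf B(0,1)}^j (\zeta_0) \, |y|$ for $y \in D_Y$, so $y \mapsto L_{\zeta_0}(y, a)$ is $\mathbf Q$-linear and bounded on $D_Y$ and extends uniquely to some $\psi_0^a \in Y^\ast$; by linearity the consistency condition in $B$ then yields $\lim_{r \to 0+} r^{-n}(T - S)_x (\phi(r^{-1}(x-a))) = 0$ for all $\phi \in \Delta$, where $S$ is the distribution induced by the constant polynomial $\psi_0^a$, and Lemma \ref{lemma:dense_diff_crit} (with its $k$ equal to $-1$) concludes $a \in A$ with $\pt \Der^0 T(a) = \psi_0^a$. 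Borel measurability of $\pt \Der^0 T$ into $Y^\ast$ with the $Y$ topology then follows from \ref{miniremark:setup}: the map $a \mapsto \langle y, \psi_0^a \rangle = L_{\zeta_0}(y, a)$ is Borel for $y \in D_Y$, and pointwise limits handle arbitrary $y \in Y$ using continuity of $\psi_0^a$.

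For $k \geq 1$: Corollary \ref{corollary:characterisation_pt_diff} yields $A = \bigcap_{o \in \boldsymbol \Xi (n,k)} A_0 (\Der^o T)$, where $A_0 (\Der^o T)$ denotes the set of points of pointwise differentiability of order~$0$ of $\Der^o T$; each of these is Borel by the base case applied to $\Der^o T \in \mathscr D' ( \mathbf R^n, Y)$, so $A$ is Borel as a finite intersection. By Remark \ref{remark:distributional_derivative}, for $a \in A$, $y \in Y$, and $o \in \boldsymbol \Xi (n,k)$ one has $\langle y \otimes e^o, \pt \Der^k T(a) \rangle = \langle y, \pt \Der^0 ( \Der^o T )(a) \rangle$, which is Borel in $a$ by the base case applied to $\Der^o T$; combined with the characterisation of Borel sets of $Z_k$ in \ref{miniremark:setup} and linearity in $\eta \in \bigodot_k \mathbf R^n$, this gives that $a \mapsto \pt \Der^k T(a)$ is Borel into $Z_k$ with the $Y \otimes \bigodot_k \mathbf R^n$ topology. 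The main obstacle is the base case, specifically synthesising the functional $\psi_0^a \in Y^\ast$ from its prescribed values on the countable $\mathbf Q$-dense subspace $D_Y$ via the uniform estimate on $C_j$, and then invoking Lemma \ref{lemma:dense_diff_crit} to lift the order-$0$ limit condition from $\Delta$ to all test functions in $\mathscr D ( \mathbf R^n, Y)$.
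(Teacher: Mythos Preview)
Your proposal is correct and complete, but it takes a genuinely different route from the paper's proof.

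The paper handles general $k$ directly: after restricting to $\{a:\|\pt\Der^m T(a)\|\leq M\text{ for all }m\}$, it encodes the graph of $a\mapsto(\pt\Der^0 T(a),\ldots,\pt\Der^k T(a))$ as a Borel subset $G$ of $\mathbf R^n\times\bigoplus_{m=0}^k Z_m$, obtained by intersecting the Borel set $B\times\bigoplus Z_m$ (where $B$ is the order-$(k-1,1)$ set from Theorem~\ref{thm:cka}) with countably many closed conditions indexed by a countable sequentially dense $\Delta\subset\mathscr D(\mathbf R^n,Y)$; Lemma~\ref{lemma:dense_diff_crit} identifies $G$ with the graph, and then an abstract descriptive-set-theoretic lemma (\cite[4.1]{MR3936235}, using that bounded balls in $Z_m$ are compact metrisable for separable $Y$) yields that the domain is Borel and the map is Borel.

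Your approach instead reduces to the base case $k=0$ via Corollary~\ref{corollary:characterisation_pt_diff}, and in that base case you build $\pt\Der^0 T(a)\in Y^\ast$ explicitly from the scalar limits $L_{\zeta_0}(y,a)$ on a countable $\mathbf Q$-dense subspace $D_Y\subset Y$, using the uniform bound on the $C_j$'s from the proof of Theorem~\ref{thm:cka} to obtain a bounded $\mathbf Q$-linear functional. This is more elementary: it avoids the external graph lemma entirely, and it leverages the already-proven characterisation of order-$k$ differentiability via point values of the $\Der^o T$. The paper's route, on the other hand, treats all orders $m=0,\ldots,k$ simultaneously, exhibits the full $k$-jet as a single Borel map into $\bigoplus_m Z_m$, and is independent of the Poincaré machinery behind Corollary~\ref{corollary:characterisation_pt_diff}. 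Both proofs share the same backbone: Theorem~\ref{thm:cka} for the Borel baseline, a countable sequentially dense family of test functions, and Lemma~\ref{lemma:dense_diff_crit} to pass from the dense family to all of $\mathscr D(\mathbf R^n,Y)$.
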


\begin{proof}
	Suppose $0 \leq M < \infty$.  It is sufficient to prove the assertion
	with $A$ replaced by
	\begin{equation*}
		A' = A \cap \{ a \with \text{$\| \pt \Der^m T(a) \| \leq M$
		for $m = 0, \ldots, k$} \}.
	\end{equation*}
	For this purpose, let $B$ be the Borel set (see~\ref{thm:cka}) of
	points at which $T$~is pointwise differentiable of order $(k-1,1)$.
	Moreover, whenever $i$ and $j$ are positive integers and $\phi \in
	\mathscr D ( \mathbf R^n, Y )$, we define the closed sets
	$L(i,j,\phi)$ to consist of those $(a,\psi) \in \mathbf R^n \times
	\bigoplus_{m=0}^k Z_m$, satisfying
	\begin{gather*}
		\| \psi_m \| \leq M \quad \text{for $m = 0, \ldots, k$}, \\
		\left | T_x \big ( \phi ( {\textstyle\frac{x-a}r} ) \big ) -
		{\textstyle\int} \big \langle \phi ( {\textstyle\frac{x-a}r}
		), \sum_{m=0}^k \langle (x-a)^m/m!, \psi_m \rangle \big
		\rangle \ud \mathscr L^n \, x \right | \leq r^{k+n}/i
	\end{gather*}
	for $0 < r \leq 1/j$, where $Z_m$ are defined and endowed with the $Y
	\otimes \bigodot_m \mathbf R^n$~topology as in~\ref{miniremark:setup}.
	Employing \cite[2.2, 2.24]{MR3528825} to choose a countable
	sequentially dense subset~$\Delta$ of $\mathscr D ( \mathbf R^n, Y )$,
	we infer that
	\begin{equation*}
		G = \left ( B \times \bigoplus_{m=0}^k Z_m \right ) \cap
		\bigcap_{\phi \in \Delta} \bigcap_{i=1}^\infty
		\bigcup_{j=1}^\infty L(i,j,\phi)
	\end{equation*}
	is a Borel set.  Noting $(a,\psi) \in G$ if and only if $a \in A'$ and
	$\psi_m = \pt \Der^m T (a)$ for $m = 0, \ldots, k$
	by~\ref{lemma:dense_diff_crit}, we apply
	\cite[4.1]{MR3936235} with $X$ and $Y$ replaced by $\mathbf
	R^n$ and $\big ( \bigoplus_{m=0}^k Z_m \big ) \cap \{ \psi \with
	\text{$\| \psi_m \| \leq M$ for $m = 0, \ldots, k$} \}$ to infer the
	conclusion.
\end{proof}

\begin{remark}
	In view of \ref{remark:value_of_some_order}
	and~\ref{remark:dense_diff_crit}, in case $n=1$, $k=0$, $Y = \mathbf
	C$, and $\mathbf C$~linear $T$, slightly more precise information
	on~$A$ may be obtained from~\cite[\S\,3.3 Folgerung~1]{MR0113134}.
\end{remark}

\begin{remark} \label{remark:gelfands_classic}
	From \ref{example:nonmeasurable}
	and~\ref{lemma:distribution_function}\,\eqref{item:distribution_function:weak_lebesgue}
	we infer that, with respect to the norm topology on~$Z_k$, the
	function $\pt \Der^k T$ may be $\mathscr L^n \restrict
	A$~nonmeasurable.
\end{remark}

\begin{lemma} \label{lemma:big_O_little_o}
	Suppose $n$ is a positive integer, $A$ is a closed subset of~$\mathbf
	R^n$, $Y$~is a Banach space, $T \in \mathscr D' ( \mathbf R^n, Y )$,
	$A \cap \spt T = \varnothing$, $0 < r < \infty$, $0 \leq \kappa <
	\infty$, $0 \leq \lambda < \infty$, $i$ is a nonnegative integer, and
	\begin{equation*}
		| T ( \phi ) | \leq \kappa s^{n+\lambda+i} \sup \im \| \Der^i
		\phi \|
	\end{equation*}
	whenever $b \in A$, $0 < s \leq 3r$, and $\phi \in \mathscr
	D_{\mathbf B (b,s)} ( \mathbf R^n, Y )$.

	Then, for some $0 \leq \Gamma < \infty$ determined by $n$, $\lambda$,
	and $i$, there holds
	\begin{equation*}
		| T ( \phi ) | \leq \Gamma \kappa r^{\lambda+i} \mathscr L^n
		( \mathbf B ( a,3r ) \without A ) \sup \im \| \Der^i \phi \|
	\end{equation*}
	whenever $a \in A$ and $\phi \in \mathscr D_{\mathbf B (a,r)} (
	\mathbf R^n, Y )$.
\end{lemma}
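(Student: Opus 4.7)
The plan is to reduce the claim to a Whitney-type decomposition of $U = \mathbf{R}^n \without A$ coupled with a partition of unity, then apply the hypothesis on each piece and sum using bounded overlap. As a preliminary step, I would invoke the Whitney cube decomposition from~\cite[3.1.13]{MR41:1976} to obtain a countable family of closed cubes $Q_j$ with pairwise disjoint interiors whose union is $U$ and satisfying $\diam Q_j \leq \dist(Q_j,A) \leq 4 \diam Q_j$. Slight enlargements $Q_j^* = (1+\tfrac{1}{8})Q_j$ yield a partition of unity $\{\chi_j\} \subset \mathscr{D}(U,\mathbf R)$ with $\spt \chi_j \subset Q_j^*$, $\sum_j \chi_j = 1$ on $U$, uniformly bounded overlap of the $Q_j^*$, and $\sup \im \|\Der^m \chi_j\| \leq c_m (\diam Q_j)^{-m}$. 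For each $j$ I pick $b_j \in A$ closest to the center of $Q_j$; then $Q_j^* \subset \mathbf{B}(b_j,s_j)$ for some $s_j$ comparable to $\diam Q_j$. Cubes $Q_j^*$ not meeting $\spt \phi$ may be discarded, and for those that do meet it, $a \in A$ lies within $r$ of $\spt \phi$, giving $\diam Q_j \leq \dist(Q_j,A) \leq r$ and hence $s_j \leq 3r$ after fixing the enlargement constant suitably small.

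Since $\spt T \cap A = \varnothing$ and $\spt T \cap \spt \phi$ is compact in $U$, only finitely many $\chi_j \phi$ satisfy $\spt (\chi_j \phi) \cap \spt T \neq \varnothing$. Moreover $\psi = \sum_j \chi_j \phi$ coincides with $\phi$ on $U$ and vanishes on $A$, so $\spt (\phi-\psi) \subset A$ is disjoint from $\spt T$; thus $T(\phi) = \sum_j T(\chi_j \phi)$ with only finitely many nonzero terms. Applying the hypothesis with $b = b_j$ and $s = s_j$ yields $|T(\chi_j \phi)| \leq \kappa s_j^{n+\lambda+i} \sup \im \|\Der^i(\chi_j\phi)\|$. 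Using Leibniz together with the bounds $\sup \im \|\Der^m \chi_j\| \lesssim s_j^{-m}$ and, via \ref{miniremark:lip}, $\sup \im \|\Der^{i-m}\phi\| \leq r^m \sup \im \|\Der^i \phi\|$, I obtain
\begin{equation*}
	\sup \im \|\Der^i(\chi_j \phi)\| \lesssim (r/s_j)^i \sup \im \|\Der^i \phi\|,
\end{equation*}
where $s_j \lesssim r$ has been used to absorb the lower-order terms. Consequently $|T(\chi_j \phi)| \lesssim \kappa s_j^{n+\lambda} r^i \sup \im \|\Der^i \phi\|$, and since $\lambda \geq 0$ and $s_j \lesssim r$ we may bound $s_j^\lambda \lesssim r^\lambda$ to arrive at $|T(\chi_j \phi)| \lesssim \kappa\, r^{\lambda+i} s_j^n \sup \im \|\Der^i \phi\|$.

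The final step is to sum over~$j$: bounded overlap of the $Q_j^*$ gives $\sum_j s_j^n \lesssim \sum_j \mathscr{L}^n(Q_j^*) \lesssim \mathscr{L}^n \bigl( \bigcup_j Q_j^* \bigr)$, and by the choice of enlargement every relevant $Q_j^*$ is contained in $\mathbf B(a,3r) \without A$, so this last measure is bounded by $\mathscr{L}^n(\mathbf B(a,3r)\without A)$. Combining everything gives the desired estimate. The bulk of the work is geometric bookkeeping, and the main obstacle I anticipate is ensuring the constants line up simultaneously with the constraints $s_j \leq 3r$, $Q_j^* \subset \mathbf B(a,3r)$, and the derivative bounds on $\chi_j$; this is routine but requires tuning the Whitney enlargement factor and keeping track of $\sqrt{n}$-type factors as one translates between cubes and the balls in which the hypothesis is phrased.
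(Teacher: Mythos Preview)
Your approach is essentially the paper's: a Whitney-type partition of unity on $\mathbf R^n \without A$, apply the hypothesis on each piece, and sum via bounded overlap/disjointness. One caveat on the tuning you anticipate: with the standard dyadic Whitney cubes ($\diam Q_j \leq \dist(Q_j,A) \leq 4\diam Q_j$) one obtains $s_j \geq \dist(c_j,A) \sim 5\diam Q_j$ while only $\diam Q_j \lesssim 2r$, so $s_j \leq 3r$ fails regardless of the enlargement factor; the paper instead invokes the ball-based construction of~\cite[3.1.13]{MR41:1976} with the radius function $h(x)=\tfrac1{20}\inf\{1,\dist(x,A)\}$, applies the hypothesis on $\mathbf B(\xi(c),30h(c))$ with $30h(c)\leq 3r$, and sums over the disjoint balls $\mathbf B(c,h(c)) \subset \mathbf B(a,3r)\without A$ --- this is precisely the tuning you need.
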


\begin{proof}
	We assume $r<1/2$ and suppose $a \in A$ and $\phi \in \mathscr
	D_{\mathbf B(a,r)} ( \mathbf R^n, Y )$.
	
	We define $h : \mathbf R^n \to \mathbf R$ by
	\begin{equation*}
		h(x) = {\textstyle\frac 1{20}} \inf \{ 1, \dist (x,A) \} \quad
		\text{for $x \in \mathbf R^n \without A$}.
	\end{equation*}
	Applying~\cite[3.1.13]{MR41:1976} with $m$ and $\Phi$ replaced by $n$
	and $\{ \mathbf R^n \without A \}$, we obtain a countable subset $C$
	of $\mathbf R^n \without A$, nonnegative functions $\zeta_c \in
	\mathscr D_{\mathbf B(c,10h(c))} ( \mathbf R^n, \mathbf R )$ for $c
	\in C$, and a sequence, determined by $n$, of numbers $0 \leq V_m <
	\infty$ for every nonnegative integer $m$ such that the family $\{
	\mathbf B (c,h(c)) \with c \in C \}$ is disjointed, such that
	\begin{gather*}
		h(x) \geq
		h(c)/3 \quad \text{for $x \in \mathbf B(c,10h(c))$}, \\
		\card ( C \cap \{ c \with \mathbf B (c,10h(c)) \cap \mathbf B
		(x,10h(x)) \neq \varnothing \} ) < \infty \quad \text{for $x
		\in \mathbf R^n \without A$}, \\
		\| \Der^m \zeta_c(x) \| \leq V_m h(x)^{-m} \quad \text{for $x
		\in \mathbf R^n \without A$, $m = 0,1,2, \ldots$}
	\end{gather*}
	for $c \in C$, and such that $\sum_{c \in C} \zeta_c (x) = 1$ for $x
	\in \mathbf R^n \without A$.  We let
	\begin{equation*}
		C' = C \cap \{ c \with \mathbf B (c,10h(c)) \cap \mathbf
		B(a,r) \cap \spt T \neq \varnothing \}.
	\end{equation*}
	Since $\mathbf B(a,r) \cap \spt T$ is a compact subset of $\mathbf R^n
	\without A$, the set $C'$ is finite and $\mathbf B (a,r) \cap \spt T$
	is contained in the interior of $\{x \with \sum_{c \in C'} \zeta_c (x)
	= 1 \}$, whence it follows
	\begin{equation*}
		T ( \phi ) = \sum_{c \in C'} T ( \zeta_c \phi ).
	\end{equation*}
	Next, we choose $\xi : C' \to A$ with $|c-\xi(c)| = \dist (c,A)$ and
	verify
	\begin{equation*}
		10h(c) \leq r, \quad \mathbf B(c,10h(c)) \subset \mathbf B (
		\xi(c),30h(c)), \quad \mathbf B (c,h(c)) \subset \mathbf B
		(a,3r) \without A
	\end{equation*}
	for $c \in C'$; in fact, we have
	\begin{equation*}
		20h(c) \leq |c-\xi(c)| \leq |c-a| \leq 10h(c)+r \leq 2r < 1,
		\quad 20h(c) = |c-\xi(c)|.
	\end{equation*}

	Whenever $c \in C$, considering both $\zeta_c$ and $\theta$ as maps
	into the normed algebra~$\bigodot_\ast Y$, the general Leibniz formula
	in~\cite[3.1.11]{MR41:1976} yields
	\begin{equation*}
		\Der^i ( \zeta_c \phi ) (x) = \sum_{m=0}^i \Der^m \zeta_c (x)
		\odot \Der^{i-m} \phi (x) \quad \text{for $x \in \mathbf
		R^n$}.
	\end{equation*}
	Using~\cite[1.10.5]{MR41:1976} and~\ref{miniremark:lip}, we therefore
	estimate
	\begin{align*}
		\sup \im \| \Der^i (\zeta_c \theta) \| & \leq \sum_{m=0}^i
		\binom im V_m 3^m h(c)^{-m} r^m \sup \im \| \Der^i \phi \| \\
		& \leq \Delta h(c)^{-i} r^i\sup \im \| \Der^i \phi \|
	\end{align*}
	whenever $c \in C'$, where $\Delta = \sum_{m=0}^i \binom im V_m 3^m$.
	Accordingly, abbreviating $\Gamma = \Delta (30)^{n+i} 3^\lambda
	\boldsymbol \alpha (n)^{-1}$, we obtain
	\begin{align*}
		| T ( \zeta_c \phi ) | & \leq \kappa (30h(c))^{n+\lambda+i}
		\sup \im \| \Der^i ( \zeta_c \phi ) \| \\
		& \leq \Gamma \kappa r^{\lambda+i} \mathscr L^n \, \mathbf B
		(c,h(c)) \sup \im \| \Der^i \phi \|
	\end{align*}
	for $c \in C'$, whence $| T ( \phi ) | \leq \Gamma \kappa
	r^{\lambda+i} \mathscr L^n ( \mathbf B(a,3r) \without A ) \sup \im \|
	\Der^i \phi \|$ follows.
\end{proof}

\begin{remark}
	The preceding lemma extends~\cite[A.1]{MR3023856} where the case
	$\lambda = 0$, $i=1$, and $Y$ a finite dimensional Euclidean space is
	treated.
\end{remark}

\begin{theorem} \label{thm:big_O_little_o}
	Suppose $i$ and $k$ are nonnegative integers, $Y$ is a separable
	Banach space, $R \in \mathscr D' ( \mathbf R^n, Y )$, and $A$ is the
	set of all $a \in \mathbf R^n$ such that $R$ is $\boldsymbol
	\nu_{\mathbf B (0,1)}^i$~pointwise differentiable of order~$(k-1,1)$
	at $a$ and $\pt \Der^m R(a) = 0$ for $m = 0, \ldots, k-1$.

	Then, $\mathscr L^n$~almost all $a \in A$ satisfy the following three
	statements:
	\begin{enumerate}
		\item \label{item:big_O_little_o:minimum} The distribution~$R$
		is pointwise differentiable of order~$k$ at~$a$.
		\item \label{item:big_O_little_o:dual_sep} If $k>0$ or
		$Y^\ast$ is separable, then $R$ is $\boldsymbol \nu_{\mathbf
		B(0,1)}^i$ pointwise differentiable of order~$k$ at~$a$.
		\item \label{item:big_O_little_o:diff} If $k>0$, then $\pt
		\Der^k R(a)=0$.
	\end{enumerate}
\end{theorem}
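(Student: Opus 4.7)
The plan is to fix a positive integer $j$ and work with the closed set $L_j$ of those $a \in \mathbf R^n \cap \mathbf B(0,j)$ for which
\begin{equation*}
|R(\psi)| \leq j s^{k+n+i} \sup \im \| \Der^i \psi \| \quad \text{whenever } 0 < s \leq 1/j \text{ and } \psi \in \mathscr D_{\mathbf B(a,s)} ( \mathbf R^n, Y ).
\end{equation*}
Rescaling test functions via $\psi(x) = \phi(s^{-1}(x-a))$ together with \ref{miniremark:lip} show that this inequality is precisely the $\boldsymbol \nu_{\mathbf B(0,1)}^i$~pointwise differentiability of order~$(k-1,1)$ at $a$ with vanishing pointwise differentials of orders $0,\ldots,k-1$, and the continuity of $R$ in the $\mathscr D$~topology makes each $L_j$ closed; thus $A = \bigcup_j L_j$. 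It suffices to verify the three conclusions at $\mathscr L^n$~almost every $a \in L_j$ for each $j$, so we fix $j$, abbreviate $L = L_j$, and take $a \in L$ to be a Lebesgue density point; then $o_r := r^{-n} \mathscr L^n ( \mathbf B (a,3r) \without L ) \to 0$ as $r \to 0+$. An inspection of the proof of \ref{lemma:big_O_little_o} reveals that its hypothesis $A \cap \spt T = \varnothing$ may be weakened to the requirement that the particular test function $\phi$ appearing in the conclusion satisfy $A \cap \spt \phi = \varnothing$: in that case $\sum_{c \in C'} \zeta_c \phi$ still coincides with $\phi$ pointwise on all of $\mathbf R^n$, and the identity $T(\phi) = \sum_{c \in C'} T(\zeta_c \phi)$ persists.

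Given $\psi \in \mathscr D_{\mathbf B(a,r)} ( \mathbf R^n, Y )$ with $r$ small and an auxiliary scale $0 < \epsilon < r$ to be chosen, introduce $\chi \in \mathscr D(\mathbf R^n, \mathbf R)$ with $\chi = 1$ on the closed $\epsilon/2$~neighbourhood of $L$, $\spt \chi$ contained in the $\epsilon$~neighbourhood of~$L$, and $\sup \im \| \Der^m \chi \| \leq \Gamma \epsilon^{-m}$; split $R(\psi) = R((1-\chi)\psi) + R(\chi\psi)$. Since $\spt((1-\chi)\psi)$ is disjoint from $L$, the strengthened \ref{lemma:big_O_little_o} (with $\lambda = k$, $\kappa = j$) together with the Leibniz-and-\ref{miniremark:lip} bound $\sup \im \| \Der^i((1-\chi)\psi) \| \leq \Gamma (r/\epsilon)^i \sup \im \| \Der^i \psi \|$ yields
\begin{equation*}
|R((1-\chi)\psi)| \leq \Gamma j r^{k+n+i} (r/\epsilon)^i o_r \sup \im \| \Der^i \psi \|.
\end{equation*}
For $R(\chi\psi)$, a Vitali covering of $L \cap \mathbf B(a,2r)$ yields at most $\Gamma (r/\epsilon)^n$ balls $\mathbf B(c_\alpha,6\epsilon)$ with $c_\alpha \in L$ whose union contains $\spt(\chi\psi)$; taking a subordinate partition of unity $\eta_\alpha \in \mathscr D_{\mathbf B(c_\alpha,6\epsilon)}(\mathbf R^n,\mathbf R)$ with $\sup \im \| \Der^m \eta_\alpha \| \leq \Gamma \epsilon^{-m}$ and applying the defining estimate of~$L$ to each $\eta_\alpha \chi \psi$ at $s = 6\epsilon$ gives, after summation,
\begin{equation*}
|R(\chi\psi)| \leq \Gamma j r^{n+i} \epsilon^k \sup \im \| \Der^i \psi \|.
\end{equation*}

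For $k \geq 1$, choose $\epsilon = \epsilon(r)$ so that $\epsilon/r \to 0$ and $(r/\epsilon)^i o_r \to 0$; both hold for $\epsilon = r^2$ if $i = 0$ and for $\epsilon = r \cdot o_r^{1/(2i)}$ if $i \geq 1$. Combining the two estimates yields $r^{-k-n-i} |R(\psi)| \to 0$ uniformly over $\psi \in \mathscr D_{\mathbf B(a,r)}(\mathbf R^n,Y)$ with $\sup \im \| \Der^i \psi \| \leq 1$, which, in view of the vanishing of the lower-order pointwise differentials of~$R$ at~$a$ supplied by the hypothesis, is exactly the statement that $R$ is $\boldsymbol \nu_{\mathbf B(0,1)}^i$~pointwise differentiable of order~$k$ at~$a$ with $\pt \Der^k R(a) = 0$; this establishes \eqref{item:big_O_little_o:dual_sep} and \eqref{item:big_O_little_o:diff} simultaneously, and \eqref{item:big_O_little_o:minimum} follows by \ref{remark:nu_pt_diff}. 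The residual case $k = 0$, where the $\epsilon^k$ factor no longer decays, is treated separately via \ref{miniremark:hahn_banach}, which realises $R$ locally near each $b \in L$ as $\Der^i$ of a $\bigodot^i(\mathbf R^n,Y)$-valued distribution representable by integration; an application of \ref{example:distribution_representable_by_integration} then supplies pointwise differentiability of order~$0$ at $\mathscr L^n$~almost every $a \in L$, together with its $\boldsymbol \nu_{\mathbf B(0,1)}^i$~version under the separability hypothesis on~$Y^\ast$. The chief obstacle is the possible non-emptiness of $\spt R \cap L$, which blocks any direct appeal to \ref{lemma:big_O_little_o}; the cutoff-and-cover decomposition circumvents this, provided the $(r/\epsilon)^i$ derivative loss from differentiating the cutoff is absorbed by the extra $\epsilon^k$ decay of $R(\chi\psi)$ (requiring $k \geq 1$) and by the Lebesgue density decay~$o_r$, via a delicate calibration of the intermediate scale~$\epsilon$.
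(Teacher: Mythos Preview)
Your argument for $k\geq 1$ is correct and genuinely different from the paper's. The paper mollifies the \emph{distribution}: it forms $R_\epsilon=R(\Phi_\epsilon\ast\,\cdot\,)$, splits $R_\epsilon=S_\epsilon+T_\epsilon$ according to whether one integrates the smooth representative $f_\epsilon$ over the $\epsilon$-neighbourhood of $A_j$ or its complement, applies \ref{lemma:big_O_little_o} to $T_\epsilon$ (whose support avoids $A_j$), and then passes to a weak-$\ast$ limit $S$ so that $T=R-S$ inherits the Lebesgue-density estimate while $S$ is given by a function $f$ with $f=0$ for $k>0$. You instead leave $R$ untouched and split the \emph{test function}: the cutoff $\chi$ separates $\psi$ into a piece with support off $L$ (handled by your correctly strengthened \ref{lemma:big_O_little_o}) and a piece supported in the $\epsilon$-tube around $L$ (handled by covering with $\sim(r/\epsilon)^n$ balls centred on $L$ and summing the defining estimate). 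Both routes ultimately balance the same two error terms; yours avoids the mollification limit and the auxiliary function $f$, at the cost of the two-scale calibration of $\epsilon$. One small technicality: your choice $\epsilon=r\,o_r^{1/(2i)}$ vanishes when $o_r=0$; replace it by, say, $\epsilon=r\max\{o_r,r\}^{1/(2i)}$, or observe separately that if $a$ lies in the interior of $L$ then letting $\epsilon\to 0$ in your second estimate alone forces $R=0$ near $a$.

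The case $k=0$ with $i\geq 1$, however, has a genuine gap. The Hahn--Banach representation in \ref{miniremark:hahn_banach} yields $R(\phi)=S(\Der^i\phi)$ for $\phi$ supported in a fixed ball, with $S$ representable by integration. Example~\ref{example:distribution_representable_by_integration} then gives pointwise differentiability of order~$0$ for $S$, not for $R$. Writing out the scaling, $r^{-n}R_x(\phi(r^{-1}(x-a)))=r^{-n-i}S_x\big((\Der^i\phi)(r^{-1}(x-a))\big)$, so convergence requires control of $S$ at order $i$, not order $0$; equivalently, $R$ is a combination of $i$-th order derivatives of the components of $S$, and \ref{remark:distributional_derivative} only transfers order $i$ information on $S$ to order $0$ information on $R$. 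Your second-term estimate $|R(\chi\psi)|\leq\Gamma j\,r^{n+i}\epsilon^0$ gives no decay either, so neither branch of your decomposition reaches the conclusion. The paper's mollification produces, in the limit, a bounded $Y^\ast$-valued function $f$ with $S(\phi)=\int\langle\phi,f\rangle\,\ud\mathscr L^n$, after which \ref{lemma:distribution_function}\,\eqref{item:distribution_function:weak_lebesgue}\,\eqref{item:distribution_function:lebesgue} supplies exactly the missing Lebesgue-point information; you will need some analogue of this step.
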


\begin{proof}
	Whenever $j$ is a positive integer, we define $A_j$ to be the set of
	all $b \in \mathbf R^n$ satisfying
	\begin{equation*}
		| R ( \phi ) | \leq j s^{n+k+i} \sup \im \| \Der^i \phi \|
		\quad \text{for $0 < s \leq 6/j$ and $\phi \in \mathscr
		D_{\mathbf B (b,s)} ( \mathbf R^n, Y )$}.
	\end{equation*}
	By~\ref{miniremark:lip}, a point $b$ belongs to $A_j$ if and only if
	\begin{equation*}
		\big | R_x ( \phi ( {\textstyle\frac{x-b}s} ) ) \big | \leq j
		s^{n+k} \boldsymbol \nu_{\mathbf B(0,1)}^i ( \phi ) \quad
		\text{for $0 < s \leq 6/j$ and $\phi \in \mathscr D_{\mathbf
		B(0,1)} ( \mathbf R^n, Y )$};
	\end{equation*}
	in particular, the sets $A_j$ are closed and $A = \bigcup_{j=1}^\infty
	A_j$.

	Supposing $j$ to be a positive integer, the conclusion will be shown
	to hold at $\mathscr L^n$~almost all $a \in A_j$.  We choose $\Phi \in
	\mathscr D ( \mathbf R^n, \mathbf R)$ satisfying $\int \Phi \ud
	\mathscr L^n = 1$ and $\spt \Phi \subset \mathbf B (0,1)$ and
	abbreviate $\Delta = \sup \im \| \Der^i \Phi \|$.

	In this paragraph, we study various objects whenever $0 < \epsilon
	\leq 3/j$.  With $\Phi_\epsilon (x) = \epsilon^{-n} \Phi (
	\epsilon^{-1} x )$ for $x \in \mathbf R^n$, we define
	$R_\epsilon \in \mathscr D' ( \mathbf R^n, Y )$ by
	\begin{equation*}
		R_\epsilon ( \phi ) = R ( \Phi_\epsilon \ast \phi )
		\quad \text{for $\phi \in \mathscr D ( \mathbf R^n, Y )$}.
	\end{equation*}
	From~\cite[4.1.2]{MR41:1976}, we infer the existence of $f_\epsilon
	\in \mathscr E ( \mathbf R^n, Y^\ast )$ satisfying
	\begin{gather*}
		R_\epsilon ( \phi ) = {\textstyle\int} \langle \phi,
		f_\epsilon \rangle \ud \mathscr L^n \quad \text{for $\phi
		\in \mathscr D ( \mathbf R^n, Y )$}, \\
		\langle y, f_\epsilon (x) \rangle = (R)_b
		(
		\Phi_\epsilon (b-x) y ) \quad \text{whenever $x \in
		\mathbf R^n$ and $y \in Y$}.
	\end{gather*}
	Clearly, we have $R_\epsilon \to R$ as $\epsilon \to 0+$ and
	\begin{equation*}
		\| f_\epsilon (x) \| \leq j 2^{n+k+i} \Delta \epsilon^k
		\quad \text{whenever $\dist (x,A_j) \leq \epsilon$}.
	\end{equation*}
	We define $S_\epsilon \in \mathscr D' ( \mathbf R^n, Y )$ by
	\begin{equation*}
		S_\epsilon ( \phi ) = {\textstyle\int_{\{ x \with \dist
		(x,A_j) \leq \epsilon \}}} \langle \phi, f_\epsilon
		\rangle \ud \mathscr L^n \quad \text{for $\phi \in \mathscr D
		( \mathbf R^n, Y )$}
	\end{equation*}
	and abbreviate $T_\epsilon = R_\epsilon - S_\epsilon$.  For
	$b \in A_j$, $0 < s \leq 3/j$, and $\phi \in \mathscr D_{\mathbf B
	(b,s)} ( \mathbf R^n, Y )$ with $\sup \im \| \Der^i \phi \| \leq 1$,
	we estimate
	\begin{gather*}
		\spt ( \Phi_\epsilon \ast \phi ) \subset \mathbf B
		(b,\epsilon+s), \quad \text{$| R_\epsilon ( \phi ) |
		\leq j 2^{n+k+i} s^{n+k+i}$ if $\epsilon \leq s$}, \\
		( \spt T_\epsilon ) \cap \{ x \with \dist (x,A_j) <
		\epsilon \} = \varnothing, \quad \text{$T_\epsilon (
		\phi ) = 0$ if $\epsilon > s$}, \\
		| S_\epsilon ( \phi ) | \leq ( \mathscr L^n \restrict \{ x
		\with \dist (x,A_j) \leq \epsilon \} )_{(\infty)} (
		\mathscr L^n)_{(1)} ( \phi ) \leq j 2^{n+k+i} \Delta
		\epsilon^k \boldsymbol \alpha (n) s^{n+i}, \\
		| T_\epsilon ( \phi ) | \leq \kappa s^{n+k+i}, \quad
		\text{where $\kappa = j 2^{n+k+i} (1+\Delta \boldsymbol \alpha
		(n))$}.
	\end{gather*}
	Therefore, applying~\ref{lemma:big_O_little_o} with $A$, $T$, and
	$\lambda$ replaced by $A_j$, $T_\epsilon$, and $k$, we infer
	\begin{equation*}
		| T_\epsilon ( \phi ) | \leq \Gamma \kappa r^{k+i} \mathscr
		L^n (\mathbf B(a,r) \without A_j) \sup \im \| \Der^i \phi \|
	\end{equation*}
	whenever $a \in A_j$, $0 < r \leq 1/j$, and $\phi \in \mathscr
	D_{\mathbf B (a,r)} ( \mathbf R^n, Y )$.

	Since $( \mathscr L^n \restrict \{ x \with \dist (x,A_j) \leq
	\epsilon \} )_{(\infty)} (f_\epsilon) \leq j 2^{n+k+i} \Delta
	\epsilon^k$ for $0 < \epsilon \leq 3/j$, we may
	use~\cite[2.1]{MR3626845} and \ref{miniremark:lebesgue_duality} both
	with $\mu =\mathscr L^n$ and \ref{miniremark:setup} with $Y$ and $m$
	replaced by $L_1 ( \mathscr L^n, Y )$ and~$0$ to construct $S \in
	\mathscr D' ( \mathbf R^n, Y )$ as a subsequential limit of
	$S_\epsilon$ as $\epsilon \to 0+$ and a $Y^\ast$~valued
	function that is $\mathscr L^n$~measurable with respect to the
	$Y$~topology on~$Y^\ast$ and that satisfies
	\begin{gather*}
		\text{$(\mathscr L^n )_{(\infty)} ( \| f \| ) \leq j 2^{n+i}
		\Delta$ if $k=0$}, \quad \text{$f = 0$ if $k>0$}, \\
		S ( \phi ) = {\textstyle\int} \langle \phi, f \rangle \ud
		\mathscr L^n \quad \text{for $\phi \in \mathscr D ( \mathbf
		R^n, Y )$}.
	\end{gather*}
	Defining $T = R-S$, the final estimate of the preceding paragraph
	implies
	\begin{equation*}
		| T ( \phi ) | \leq \Gamma \kappa r^{k+i} \mathscr L^n
		(\mathbf B(a,r) \without A_j) \sup \im \| \Der^i \phi \|
	\end{equation*}
	for $a \in A_j$, $0 < r \leq 1/j$, and $\phi \in \mathscr D_{\mathbf B
	(a,r)} ( \mathbf R^n, Y )$.  Hence, $T$~is
	$\boldsymbol \nu_{\mathbf B(0,1)}^i$~pointwise differentiable of
	order~$k$ at~$a$ with $\pt \Der^k T(a)=0$ at $\mathscr L^n$~almost all
	$a \in A_j$ by \cite[2.8.18, 2.9.11]{MR41:1976}.  In view
	of~\ref{lemma:distribution_function}\,\eqref{item:distribution_function:weak_lebesgue}\,\eqref{item:distribution_function:lebesgue},
	the conclusion now follows.
\end{proof}

\begin{remark}
	The preceding theorem generalises~\cite[A.3]{MR3023856} where the case
	$i = 1$, $k=0$, and $Y$ a finite dimensional Euclidean space is
	treated.
\end{remark}

\begin{remark} \label{remark:big_O_little_o:sep}
	The hypothesis in~\eqref{item:big_O_little_o:dual_sep} may not be
	omitted by~\ref{remark:need_dual_sep}.
\end{remark}

\begin{remark} \label{remark:big_O_little_o:zero_order}
	Considering the distribution associated to any nonzero continuous
	function $f : \mathbf R \to \mathbf R$, the hypothesis
	in~\eqref{item:big_O_little_o:diff} may not be omitted
	by~\ref{lemma:distribution_function}\,\eqref{item:distribution_function:norm_diff}.
\end{remark}

\begin{corollary} \label{corollary:big_O_little_o}
	Suppose $k$ is a positive integer, $Y$ is a separable Banach space, $R
	\in \mathscr D' ( \mathbf R^n, Y )$, and $A$ is the set of all $a \in
	\mathbf R^n$ such that $R$ is pointwise differentiable of
	order~$(k-1,1)$ at $a$ and $\pt \Der^m R(a) = 0$ for $m = 0, \ldots,
	k-1$.

	Then, there holds $\pt \Der^k R(a) = 0$ for $\mathscr L^n$~almost all
	$a \in A$.
\end{corollary}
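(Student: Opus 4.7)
The plan is to reduce the corollary directly to Theorem~\ref{thm:big_O_little_o} by partitioning $A$ according to the order of the seminorm with respect to which $R$ is differentiable. By Remark~\ref{remark:norm_differentiability}, pointwise differentiability of order~$(k-1,1)$ at a point $a$ is equivalent to $\boldsymbol \nu_{\mathbf B(0,1)}^i$~pointwise differentiability of order~$(k-1,1)$ at $a$ for some nonnegative integer~$i$; moreover, in either case the $k-1$~jet at~$a$ is the same, so the condition $\pt \Der^m R(a) = 0$ for $m = 0, \ldots, k-1$ is intrinsic to~$R$ and~$a$.

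Accordingly, for each nonnegative integer~$i$, I would let $A_i$ denote the set of all $a \in \mathbf R^n$ at which $R$ is $\boldsymbol \nu_{\mathbf B(0,1)}^i$~pointwise differentiable of order~$(k-1,1)$ with $\pt \Der^m R(a) = 0$ for $m = 0, \ldots, k-1$. The preceding paragraph then yields $A = \bigcup_{i=0}^\infty A_i$. Since $k \geq 1$, Theorem~\ref{thm:big_O_little_o} applied with this choice of $A_i$ ensures that $\mathscr L^n$~almost every $a \in A_i$ satisfies both statement~\eqref{item:big_O_little_o:minimum}, so that $R$ is pointwise differentiable of order~$k$ at~$a$, and statement~\eqref{item:big_O_little_o:diff}, so that $\pt \Der^k R(a) = 0$.

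Taking the countable union of the exceptional $\mathscr L^n$~null sets over $i = 0, 1, 2, \ldots$ yields $\pt \Der^k R(a) = 0$ for $\mathscr L^n$~almost all $a \in A$. There is no real obstacle here: the substance is entirely in Theorem~\ref{thm:big_O_little_o}, and the only subtlety is the replacement of unqualified pointwise differentiability by $\boldsymbol \nu_{\mathbf B(0,1)}^i$~pointwise differentiability, which is handled cleanly by Remark~\ref{remark:norm_differentiability}. Note in particular that the separability hypothesis on~$Y^\ast$ required by statement~\eqref{item:big_O_little_o:dual_sep} of Theorem~\ref{thm:big_O_little_o} is not needed here, since we invoke only \eqref{item:big_O_little_o:minimum} and~\eqref{item:big_O_little_o:diff}.
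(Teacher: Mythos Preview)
Your proof is correct and follows essentially the same approach as the paper: the paper's own proof is the one-line combination of Remark~\ref{remark:norm_differentiability} (with $k$ and $\alpha$ replaced by $k-1$ and $1$) and Theorem~\ref{thm:big_O_little_o}\,\eqref{item:big_O_little_o:diff}, and you have simply made the countable decomposition over~$i$ explicit.
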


\begin{proof}
	We combine \ref{remark:norm_differentiability} with $k$ and $\alpha$
	replaced by $k-1$ and $1$
	and~\ref{thm:big_O_little_o}\,\eqref{item:big_O_little_o:diff}.
\end{proof}

\begin{theorem} \label{thm:rademacher_T}
	Suppose $i$ and $k$ are nonnegative integer, $Y$ is a separable Banach
	space, $T \in \mathscr D' ( \mathbf R^n, Y )$, and $A$ is the set of
	points at which the distribution $T$~is $\boldsymbol \nu_{\mathbf B
	(0,1)}^i$ pointwise differentiable of order~$(k-1,1)$.

	Then, $\mathscr L^n$~almost all $a \in A$ satisfy the following two
	statements:
	\begin{enumerate}
		\item \label{item:rademacher_T:diff} The distribution~$T$ is
		pointwise differentiable of order~$k$ at~$a$.
		\item \label{item:rademacher_T:dual_sep} If $Y^\ast$ is
		separable, then $T$ is $\boldsymbol \nu_{\mathbf
		B(0,1)}^i$~pointwise differentiable of order~$k$ at~$a$.
	\end{enumerate}
\end{theorem}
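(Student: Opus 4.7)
The plan is to reduce the statement to \ref{thm:big_O_little_o} by subtracting from $T$ a function of class~$(k-1,1)$ matching the $k-1$ jet of $T$ on a suitable compact set, so that the residue has vanishing jet and inherits the $\boldsymbol\nu^i_{\mathbf B(0,1)}$~pointwise differentiability of order~$(k-1,1)$ from the hypothesis on~$T$.

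First, we note via~\ref{remark:norm_differentiability} that every point of~$A$ is a point of (plain) pointwise differentiability of order~$(k-1,1)$; hence~\ref{thm:cka}, applied with $k-1$ and $1$ in place of $k$ and $\alpha$, supplies compact sets $C_j$ covering~$A$ and, when $k\ge 1$, functions $f_j:\mathbf R^n\to Y^\ast$ of class~$(k-1,1)$ satisfying $\pt\Der^m T(a)=\Der^m f_j(a)$ for $a\in C_j$ and $m=0,\dots,k-1$. We set $S_j\in\mathscr D'(\mathbf R^n,Y)$ to be integration against~$f_j$ if $k\ge 1$ and $S_j=0$ if $k=0$, and put $R_j=T-S_j$. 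Using \ref{lemma:rademacher_cka}\,\eqref{item:rademacher_cka:taylor} together with the elementary inequality $\boldsymbol\nu^0_{\mathbf B(0,1)}\le\boldsymbol\nu^i_{\mathbf B(0,1)}$ on $\mathscr D_{\mathbf B(0,1)}(\mathbf R^n,Y)$ furnished by~\ref{miniremark:lip}, we verify that $R_j$ is $\boldsymbol\nu^i_{\mathbf B(0,1)}$~pointwise differentiable of order~$(k-1,1)$ at each $a\in A\cap C_j$ with $\pt\Der^m R_j(a)=0$ for $m=0,\dots,k-1$, the vanishing being vacuous when $k=0$.

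We then apply~\ref{thm:big_O_little_o} to~$R_j$: it yields, at $\mathscr L^n$~almost all $a\in A\cap C_j$, pointwise differentiability of~$R_j$ of order~$k$, and, in case $k>0$ or $Y^\ast$ is separable, also $\boldsymbol\nu^i_{\mathbf B(0,1)}$~pointwise differentiability of~$R_j$ of order~$k$. To pass back from~$R_j$ to $T=R_j+S_j$, we invoke \ref{lemma:rademacher_cka}\,\eqref{item:rademacher_cka:diff} for the first conclusion and, when $Y^\ast$ is separable, \ref{lemma:rademacher_cka}\,\eqref{item:rademacher_cka:dual_sep} for the second, both of which supply the requisite differentiability of~$S_j$ at $\mathscr L^n$~almost all points. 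Summing the negligible exceptional sets over the countably many~$j$ produces the theorem.

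The technical work has already been done in~\ref{thm:big_O_little_o}; the only residual care required is the $\boldsymbol\nu^i$-versus-$\boldsymbol\nu^0$ bookkeeping and the separate handling of $k=0$ (where~\ref{thm:cka} provides no~$f_j$, so one takes $R_j=T$ directly). Once these points are dispatched, the argument is a direct combination of the rectifiability theorem~\ref{thm:cka}, the representability lemma~\ref{lemma:rademacher_cka}, and the residue estimate~\ref{thm:big_O_little_o}.
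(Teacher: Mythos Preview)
Your proposal is correct and follows essentially the same route as the paper: reduce via \ref{thm:cka} (with $k-1$ and $1$) to obtain the $C_j$ and $f_j$, subtract the associated $S_j$ so that the residue $R_j=T-S_j$ has vanishing $(k-1)$-jet and inherits $\boldsymbol\nu^i_{\mathbf B(0,1)}$ pointwise differentiability of order $(k-1,1)$ on $A\cap C_j$ via \ref{lemma:rademacher_cka}\,\eqref{item:rademacher_cka:taylor}, then invoke \ref{thm:big_O_little_o} for $R_j$ and \ref{lemma:rademacher_cka}\,\eqref{item:rademacher_cka:diff}\,\eqref{item:rademacher_cka:dual_sep} for $S_j$. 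The only cosmetic difference is that the paper disposes of the case $k=0$ at the outset by appealing directly to \ref{thm:big_O_little_o}, whereas you carry it through with $S_j=0$; the effect is identical.
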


\begin{proof}
	In view
	of~\ref{thm:big_O_little_o}\,\eqref{item:big_O_little_o:minimum}\,\eqref{item:big_O_little_o:dual_sep},
	we may assume $k \geq 1$.  Then, we obtain sequences $C_j$ and~$f_j$
	from~\ref{thm:cka} with $k$ and $\alpha$ replaced by $k-1$ and $1$.
	Associating $S_j$ with $f_j$ as in~\ref{lemma:rademacher_cka} and
	defining $R_j = T-S_j$, the distributions~$R_j$ are $\boldsymbol
	\nu_{\mathbf B(0,1)}^i$~pointwise differentiable of order~$(k-1,1)$ at
	$a$ with $\pt \Der^m R_j (a) = 0$ whenever $a \in A \cap C_j$, $m = 0,
	\ldots, k-1$, and $j$ is a positive integer by \ref{thm:cka}
	and~\ref{lemma:rademacher_cka}\,\eqref{item:rademacher_cka:taylor}.  The
	conclusion now follows from
	\ref{lemma:rademacher_cka}\,\eqref{item:rademacher_cka:diff}\,\eqref{item:rademacher_cka:dual_sep}
	and~\ref{thm:big_O_little_o}\,\eqref{item:big_O_little_o:minimum}\,\eqref{item:big_O_little_o:dual_sep}.
\end{proof}

\begin{corollary} \label{corollary:rademacher}
	Suppose $k$ is a nonnegative integer, $Y$ is a separable Banach space,
	$T \in \mathscr D' ( \mathbf R^n, Y )$, and $A$ is the set of points
	at which $T$~is pointwise differentiable of order~$(k-1,1)$.

	Then, $T$ is pointwise differentiable of order~$k$ at $\mathscr
	L^n$~almost all $a \in A$.
\end{corollary}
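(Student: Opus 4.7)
The plan is to deduce this corollary almost immediately from Theorem~\ref{thm:rademacher_T}\,\eqref{item:rademacher_T:diff} by means of Remark~\ref{remark:norm_differentiability}, which converts the $\nu$-free notion of pointwise differentiability of order $(k-1,1)$ into its $\boldsymbol \nu_{\mathbf B(0,1)}^i$-version for some nonnegative integer~$i$.

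Concretely, I would proceed as follows. First, for each nonnegative integer~$i$, I would let $A_i$ denote the set of points at which $T$ is $\boldsymbol \nu_{\mathbf B(0,1)}^i$~pointwise differentiable of order~$(k-1,1)$. By Remark~\ref{remark:norm_differentiability}, a point $a$ belongs to $A$ if and only if it belongs to $A_i$ for some nonnegative integer~$i$; hence $A = \bigcup_{i=0}^\infty A_i$, a countable union.

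Next, for each fixed~$i$, I would apply Theorem~\ref{thm:rademacher_T}\,\eqref{item:rademacher_T:diff} with $T$ and $A$ there taken to be the present $T$ and $A_i$; this yields that $T$ is pointwise differentiable of order~$k$ at $\mathscr L^n$~almost all $a \in A_i$. Taking the countable union over~$i$ and noting that a countable union of $\mathscr L^n$~null sets is $\mathscr L^n$~null, the conclusion follows.

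There is essentially no obstacle in this final step, since the substantive work has already been carried out in the proof of Theorem~\ref{thm:rademacher_T}, namely the combination of the Whitney-type decomposition of Theorem~\ref{thm:cka}, the Rademacher-type result of Lemma~\ref{lemma:rademacher_cka}, and the ``big~$O$ to little~$o$'' transition of Theorem~\ref{thm:big_O_little_o}\,\eqref{item:big_O_little_o:minimum}. The only point to check is that the separability hypothesis on~$Y$ is enough for item~\eqref{item:rademacher_T:diff}, which indeed does not require separability of~$Y^\ast$.
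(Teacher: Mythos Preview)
Your proposal is correct and follows exactly the approach of the paper, which simply states ``We combine \ref{remark:norm_differentiability} and~\ref{thm:rademacher_T}\,\eqref{item:rademacher_T:diff}.'' Your explicit decomposition $A = \bigcup_{i=0}^\infty A_i$ and the countable-union argument are precisely the natural way to unpack that one-line combination.
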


\begin{proof}
	We combine \ref{remark:norm_differentiability}
	and~\ref{thm:rademacher_T}\,\eqref{item:rademacher_T:diff}.
\end{proof}

\begin{remark} \label{remark:zielezny_rademacher}
	The case $n=1$, $k=0$, $Y = \mathbf C$, and $\mathbf C$~linear $T$
	was established in~\cite[\S\,2.1 Satz 2.1]{MR0113134}.
\end{remark}

\begin{theorem} \label{thm:k_lusin_approximation}
	Suppose $k$ is a nonnegative integer, $Y$ is a Banach space,
	$Y^\ast$~is separable, $T \in \mathscr D' ( \mathbf R^n, Y )$, and $A$
	is the set of points at which $T$ is pointwise differentiable of
	order~$k$.

	Then, for each $\epsilon > 0$, there exists $g : \mathbf R^n \to
	Y^\ast$ of class~$k$ such that
	\begin{equation*}
		\mathscr L^n \big ( A \without \{ a \with \textup{$\pt \Der^m
		T (a) = \Der^m g (a)$ for $m = 0, \ldots, k$} \} \big ) <
		\epsilon.
	\end{equation*}
\end{theorem}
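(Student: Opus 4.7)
For the case $k = 0$, I would invoke Theorem~\ref{thm:borel_derivatives}: separability of $Y^\ast$ matches the norm and $Y \otimes \bigodot_0 \mathbf R^n$-topology Borel structures on $Y^\ast$ (see \ref{miniremark:setup}), so $\pt \Der^0 T : A \to Y^\ast$ is norm-Borel. Lusin's theorem produces a compact $F \subset A$ with $\mathscr L^n ( A \without F ) < \epsilon$ on which $\pt \Der^0 T$ is norm-continuous, and a standard extension of continuous maps into the separable Banach space $Y^\ast$ yields the required continuous $g$.

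For $k \geq 1$, my plan is to perform a single Whitney $C^k$ extension on a carefully chosen compact $F \subset A$, after establishing the missing $k$-th order identity. Since pointwise differentiability of order $k$ at $a$ entails pointwise differentiability of order $(k-1, 1)$ at $a$, Theorem~\ref{thm:cka} supplies compact sets $C_j$ with $A \subset \bigcup_{j=1}^\infty C_j$ and functions $f_j : \mathbf R^n \to Y^\ast$ of class $(k-1, 1)$ with $\pt \Der^m T(a) = \Der^m f_j(a)$ for $a \in C_j$ and $m = 0, \ldots, k-1$; the proof of~\ref{thm:cka} further delivers the uniform $\boldsymbol \nu_{\mathbf B(0,1)}^j$-pointwise differentiability of order $(k-1,1)$ of $T$ on $C_j$. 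Theorem~\ref{thm:fed_3.1.15_revisited} applied to each $f_j$ yields $g_j : \mathbf R^n \to Y^\ast$ of class $k$ whose derivatives up to order $k$ agree with those of $f_j$ at $\mathscr L^n$-almost every point of $\dmn \Der \Der^{k-1} f_j$, and the latter domain has full $\mathscr L^n$-measure by Theorem~\ref{thm:rademacher_gelfand} applied to the Lipschitzian map $\Der^{k-1} f_j$, whose target $Y^\ast$ is separable.

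The new ingredient, distinguishing Theorem~\ref{thm:k_lusin_approximation} from the weaker version noted in the introduction, is the identification $\Der^k g_j(a) = \pt \Der^k T(a)$ at $\mathscr L^n$-almost every $a \in A \cap C_j$. Let $S_j \in \mathscr D'(\mathbf R^n, Y)$ be the distribution $\phi \mapsto \int \langle \phi, f_j \rangle \ud \mathscr L^n$. By Lemma~\ref{lemma:rademacher_cka}\,\eqref{item:rademacher_cka:taylor}, $S_j$ is $\boldsymbol \nu_{\mathbf B(0,1)}^0$-pointwise differentiable of order $(k-1, 1)$ everywhere with $\pt \Der^m S_j = \Der^m f_j$ for $m < k$; hence the residual distribution $R_j = T - S_j$ at every $a \in A \cap C_j$ meets the hypotheses of Corollary~\ref{corollary:big_O_little_o}, which yields $\pt \Der^k R_j(a) = 0$ at $\mathscr L^n$-a.e.~such $a$ and therefore $\pt \Der^k S_j(a) = \pt \Der^k T(a)$. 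At the full-measure set where $\Der^k f_j$ exists classically (Theorem~\ref{thm:rademacher_gelfand} once more), Lemma~\ref{lemma:distribution_function}\,\eqref{item:distribution_function:weak_diff} gives $\pt \Der^k S_j(a) = \Der^k f_j(a)$; combining the identities yields $\Der^k g_j(a) = \pt \Der^k T(a)$ for $\mathscr L^n$-a.e. $a \in A \cap C_j$.

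It remains to assemble the $g_j$ into one class-$k$ function. I would pick $J$ with $\mathscr L^n \big( A \without \bigcup_{j \leq J} C_j \big) < \epsilon/2$; the uniform $\boldsymbol \nu_{\mathbf B(0,1)}^j$-pointwise differentiability of $T$ of order $(k-1, 1)$ on $C_j$ built into the proof of Theorem~\ref{thm:cka}, combined with Theorem~\ref{thm:big_O_little_o} applied to $R_j$ and Lemma~\ref{lemma:rademacher_cka}\,\eqref{item:rademacher_cka:dual_sep} (using $Y^\ast$ separable), upgrades $T$ to $\boldsymbol \nu_{\mathbf B(0,1)}^J$-pointwise differentiability of order $k$ at $\mathscr L^n$-a.e.~point of $A \cap \bigcup_{j \leq J} C_j$. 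Lusin applied to the Borel jets $\pt \Der^m T$ for $m = 0, \ldots, k$ together with Egorov applied to this uniform-in-$\phi$ approximation then yields a compact $F \subset A \cap \bigcup_{j \leq J} C_j$ with $\mathscr L^n ( A \without F ) < \epsilon$ on which each $\pt \Der^m T$ is continuous, each $g_j$ realises the jet of $T$ to order $k$, and the remainder in the definition of $\boldsymbol \nu_{\mathbf B(0,1)}^J$-pointwise differentiability is uniform in $a \in F$. Running the argument in the proof of Theorem~\ref{thm:cka} (testing against a bump supported in $\mathbf B(v,1) \cap \mathbf B(0,1)$ with $v = r^{-1}(b-a)$ and then invoking Lemma~\ref{lemma:polynomial_fcts}), but with the $o(r^{k+n})$ rate in place of $O(r^{k+\alpha+n})$, verifies the Whitney $C^k$ compatibility $\| \Der^m P_a(b) - \Der^m P_b(b) \| = o ( |a-b|^{k-m} )$ uniformly on $F$ for the jet $P_a(x) = \sum_{m=0}^k \langle (x-a)^m/m!, \pt \Der^m T(a) \rangle$, and Whitney's $C^k$ extension theorem (see \cite[3.1.14]{MR41:1976}) produces the desired $g$. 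The main obstacle I anticipate is exactly this final uniformity–plus–Whitney step: unlike in the class $(k,\alpha)$ setting of Theorem~\ref{thm:cka}, the little-$o$ Whitney condition for class $k$ forces the uniformity on $F$ to be extracted via Egorov from the $\boldsymbol \nu_{\mathbf B(0,1)}^J$-version of the differentiability, rather than read off directly from the compactness of the $C_j$.
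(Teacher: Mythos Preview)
Your case $k=0$ matches the paper exactly. For $k \geq 1$ your route is correct in outline but substantially longer than the paper's, and contains one small gap and one large detour.

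The small gap is in your step~2: Theorem~\ref{thm:fed_3.1.15_revisited}, applied with $k$ replaced by $k-1$, only yields $\Der^m f_j = \Der^m g_j$ for $m = 0, \ldots, k-1$ on the good set, not up to order~$k$ as you claim. The missing equality $\Der^k f_j(a) = \Der^k g_j(a)$ does hold at density points of that set where both sides exist (differentiate the identity $\Der^{k-1} f_j = \Der^{k-1} g_j$ approximately), but you should say so.

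The large detour is your step~4. The paper avoids the entire Egorov--Whitney $C^k$ assembly by observing that a \emph{single} $g$ already suffices. Concretely: from \ref{thm:cka} with $(k-1,1)$ one obtains (using that the sets $C_j$ in its proof are increasing, or by a straightforward locally finite patching of the~$f_j$) a single $f : \mathbf R^n \to Y^\ast$ of class~$(k-1,1)$ with $\pt \Der^m T(a) = \Der^m f(a)$ for $m = 0, \ldots, k-1$ off a set of measure $< \epsilon$. Then \ref{thm:rademacher_gelfand} and \ref{thm:fed_3.1.15_revisited} yield a single $g$ of class~$k$ with the same property. Now the key point: apply Corollary~\ref{corollary:big_O_little_o} directly to $R(\phi) = T(\phi) - \int \langle \phi, g \rangle \ud \mathscr L^n$. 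On the good set, $R$ is pointwise differentiable of order~$(k-1,1)$ with vanishing derivatives of orders $0, \ldots, k-1$, so the corollary gives $\pt \Der^k R(a) = 0$, i.e., $\pt \Der^k T(a) = \Der^k g(a)$, at $\mathscr L^n$~almost every such~$a$. This $g$ is the final answer; no re-extension is needed.

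In your own framework the same shortcut is available: your step~3 already shows that each $g_j$ realises the full $k$-jet of~$T$ almost everywhere on $A \cap C_j$, so since the $C_j$ are increasing (inspect their definition via the sets $L_j$ in the proof of~\ref{thm:cka}), for $\mathscr L^n(A) < \infty$ a single $g_J$ with $J$ large finishes the proof, and for general $A$ a locally finite patching of the~$g_j$ by a smooth partition of unity subordinate to an annular cover does. Your Whitney $C^k$ extension via Egorov on the uniform $\boldsymbol \nu_{\mathbf B(0,1)}^J$-rate would work, but it reproves from scratch something you already possess.
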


\begin{proof}
	By~\cite[II.3.16]{MR0117523}, $Y$ is separable.  In view
	of~\ref{miniremark:setup}, we infer from~\ref{thm:borel_derivatives}
	that $\dmn \Der^m T$ is a Borel set and $\pt \Der^m T$ is a Borel
	function with respect to the norm topology on~$\bigodot^m ( \mathbf
	R^n, Y )$ for $m = 0, \ldots, k$.  If $k = 0$, we may accordingly
	combine \cite[2.3.5, 3.1.14]{MR41:1976} to obtain the conclusion.

	Suppose now $k > 0$.  Then, we employ~\ref{thm:cka} with $k$ and
	$\alpha$ replaced by $k-1$ and $1$ to construct $f : \mathbf R^n \to
	Y^\ast$ of class~$(k-1,1)$ such that
	\begin{equation*}
		\mathscr L^n \big ( A \without \{ a \with \text{$\pt \Der^m T
		(a) = \pt \Der^m f (a)$ for $m = 0, \ldots, k-1$} \} \big ) <
		\epsilon.
	\end{equation*}
	Noting \ref{miniremark:setup} and
	applying~\ref{thm:rademacher_gelfand} with $Y$ and $f$ replaced by $Y
	\otimes \bigodot_{k-1} \mathbf R^n$ and $\Der^{k-1} f$, we infer that
	$\Der^{k-1} f$ is differentiable at $\mathscr L^n$~almost all~$a$.
	Therefore, by~\ref{thm:fed_3.1.15_revisited} with $k$ and $Z$ replaced
	by $k-1$ and~$Y^\ast$, there exists $g : \mathbf R^n \to Y^\ast$ of
	class~$k$ such that
	\begin{equation*}
		\mathscr L^n \big ( A \without \{ a \with \text{$\pt \Der^m T
		(a) = \pt \Der^m g (a)$ for $m = 0, \ldots, k-1$} \} \big ) <
		\epsilon.
	\end{equation*}
	The conclusion now follows from~\ref{corollary:big_O_little_o} applied
	with $R \in \mathscr D' ( \mathbf R^n, Y )$ defined by $R ( \phi ) = T
	( \phi ) - \int \langle \phi, g \rangle \ud \mathscr L^n$ for $\phi
	\in \mathscr D ( \mathbf R^n, Y )$.
\end{proof}

\begin{remark} \label{remark:k_lusin_approximation}
	One cannot replace $Y^\ast$ by $Y$ in the separability hypothesis
	by~\ref{remark:gelfands_classic}.
\end{remark}


\noindent \textsc{Affiliations}

\medskip \noindent Department of Mathematics \\
National Taiwan Normal University \\
No.88, Sec.4, Tingzhou Rd. \\
Wenshan Dist., \textsc{Taipei City 11677 \\
Taiwan(R.\ O.\ C.)}

\medskip \noindent Mathematical Division \\
National Center for Theoretical Sciences \\
No.1, Sec.4, Roosevelt Rd. \\
Da'an Dist., \textsc{Taipei City 10617 \\ Taiwan(R.\ O.\ C.)}

\medskip \noindent \textsc{Email address}

\medskip \noindent
\href{mailto:Ulrich.Menne@math.ntnu.edu.tw}{Ulrich.Menne@math.ntnu.edu.tw}

\end{document}